 \newcounter{corr}
 \definecolor{violet}{rgb}{0.580,0.,0.827}
 \newcommand{\corr}[3]{\typeout{Warning : a correction remains in page
 \thepage}
 				\stepcounter{corr}        
 				{\color{blue}\ifmmode\text{\,\sout{\ensuremath{#1}}\,}\else\sout{#1}\fi}
         {\color{red}#2}
         {\color{violet} #3}}
\numberwithin{equation}{section}
\newtheorem{theorem}{Theorem}
\newtheorem{lemma}[theorem]{Lemma}
\theoremstyle{remark}
\newtheorem{remark}[theorem]{Remark}
\theoremstyle{definition}
\newtheorem{definition}[theorem]{Definition}
\newcommand{\Real}{\mathbb{R}}
\newcommand{\Natural}{\mathbb{N}}
\newcommand{\norme}[1]{\left\Vert #1\right\Vert}
\newcommand{\ug}{\boldsymbol{u}}
\newcommand{\vg}{\boldsymbol{v}}
\newcommand{\xg}{\boldsymbol{x}}
\newcommand{\dd}{\mathrm{d}}
\newcommand{\DIV}{\text{div }}
\newcommand{\wg}{\boldsymbol{w}}
\newcommand{\normal}{\mathbf{n}}
\newcommand{\BS}[1]{\boldsymbol{#1}}
\newcommand{\CM}{\Omega}
\newcommand{\CMT}{\widehat{\Omega}}
\newcommand{\etap}{\overset{.}{\eta}}
\newcommand{\hb}{{\underline{h}}}
\title[]{{On an existence theory for a fluid-beam problem encompassing possible contacts}}
\author{Jean-J\'er\^ome Casanova}
\address[Jean-J\'er\^ome Casanova]{CEREMADE, UMR CNRS 7534, Universit\'e Paris-Dauphine, PSL Research University, Place du Mar\'echal de Lattre de Tassigny, 75775 Paris Cedex 16, France}
\email{casanova@ceremade.dauphine.fr}
\author{C\'eline Grandmont}
\address[C\'eline Grandmont]{Inria Paris, 75012 Paris, France \&  Sorbonne Universit\'e, UMR 7598 LJLL,75005 Paris, France}
\email{celine.grandmont@inria.fr}
\author{Matthieu Hillairet}
\address[Matthieu Hillairet]{IMAG, Univ Montpellier, CNRS, Montpellier, France}
\email{matthieu.hillairet@umontpellier.fr}
\begin{document}

\begin{abstract}
In this paper we consider a coupled system of pdes modelling the interaction between a two--dimensional incompressible viscous fluid and a one--dimensional elastic beam located on the upper part of the fluid domain boundary. We design a functional framework to define weak solutions in case of contact between the elastic beam and the bottom of the fluid cavity. We then prove that such solutions exist globally in time regardless a possible contact by approximating the beam equation by a damped beam and letting this additional viscosity vanishes.
\end{abstract}

\maketitle

\section{Introduction}

In this paper we consider a fluid--structure system coupling a $2$D homogeneous viscous incompressible fluid with a $1$D 
elastic structure. When the elastic structure is at rest, the fluid domain is of rectangular type and the structure is located on the upper part of the fluid domain boundary.  The fluid is described by the Navier--Stokes equations set in an unknown domain depending on the structure displacement that is assumed to be only transverse and that satisfies a beam equation.  Since the fluid is viscous it sticks to the boundaries so that the fluid and the structure velocities are equal at the interface. Finally, the fluid applies a surface force on the structure. Such coupled nonlinear models  can be viewed as toy models to describe the blood flow through large arteries.

The existence of a solution to the Cauchy problem associated with this kind of systems has been intensively studied in the last years. In \cite{DaVeiga, Lequeurre11, Lequeurre13} existence and uniqueness of a strong solution locally in time is proved in  case  additional viscosity is added to the structure equation (so that the structure displacement satisfies a damped Euler--Bernoulli equation). When no viscosity is added and in  case the dynamics of the structure displacement is governed by a membrane equation, existence and uniqueness of a local strong solution is obtained in \cite{Grandmont-Hillairet-Lequeurre}. The beam case with no additional viscosity is investigated in \cite{Badra-Takahashi}, where existence of strong solution locally in time (or for small data) is proved but with a gap between the regularity of the initial conditions and the propagated regularity of the structure displacement.  Existence of weak solutions is obtained in \cite{Chambolle-etal} for $3$D-$2$D coupling where the structure behaviour is described by a viscous plate equation and in \cite{Grandmont08, Muha-Canic13} in the non-viscous case. Let us also mention weak existence results on fluid--shell models \cite{Lengeler, Muha-Schwarzacher}. Note that these results are obtained as long as the structure does not touch the bottom of the fluid cavity (or, in  case of shells, as long as there is no self contact). More recently, in \cite{Grandmont-Hillairet}, the authors establish  existence of a global-in-time strong solution in the $2$D-$1$D case when the structure is governed by a damped Euler--Bernoulli equation. This global-in-time result is a consequence of a no contact one: it is proven therein that, for any $T>0,$ the structure does not touch the bottom of the cavity. The proof of this latter result relies strongly  on the additional viscosity in the beam equation and on the control of the curvature of the structure. 

\medskip

The question we address here is: can we prove existence of a global weak solution regardless of a possible contact (for an undamped beam)? We aim to take advantage of the existence of global strong solution for a viscous structure and let the additional viscosity tend to zero.  Our scheme is inspired by the one developed in \cite{SanMartin-Starovoitov-Tucsnak} where the global existence of a weak solution is derived for a $2$D fluid--solid coupled problem. However, in \cite{SanMartin-Starovoitov-Tucsnak} the solids are viewed as  inclusions whose viscosities is infinite. The fluid--solid problem is then approximated by a completely fluid problem with different viscosities in the inclusions and in the fluid. The viscosity of the inclusions is then sent to infinity. In contrast, in our case the parabolic--hyperbolic fluid--structure system is approximated by a parabolic--parabolic one by adding viscosity to the structure. We prove that, up to the extraction of a subsequence, the sequence of solutions of the damped  system converges towards a weak solution (in a sense to be defined) of the undamped system. The main difficulties are to define functional and variational frameworks compatible with a possible contact and to prove the strong compactness of the velocities, also in  case of a possible contact. Indeed the proof developed for instance in \cite{Grandmont08}, where the vanishing viscosity limit is also studied, strongly relies on the fact that the elastic structure does not touch the bottom of the fluid cavity. 

\subsection{The fluid-structure model}
We  introduce now the damped coupled fluid--structure system. We refer to
this system as $(FS)_{\gamma}$, where the subscript $\gamma$ is used to track the dependency with respect to the ``viscosity'' of the structure. The configuration ``at rest" of the fluid--structure system is assumed to be of the form $(0, L)\times (0, 1)$ where the elastic structure occupies the part of the boundary $(0, L)\times \{1\}$. The deformed fluid set is denoted by $\mathcal{F}_{h_{\gamma}(t)}$. It depends on the structure vertical deformation $h_{\gamma}=1+\eta_\gamma$, where $\eta_\gamma$ denotes the elastic vertical displacement. 
Thus, the deformed fluid configuration reads:
\begin{equation}\label{fluid.domain}
\mathcal{F}_{h_{\gamma}(t)}=\{(x,y)\in\mathbb{R}^{2}\mid 0<x<L,\,0<y<h_{\gamma}(x,t)\}.
\end{equation}
 The deformed elastic configuration is denoted by $\Gamma_{h_{\gamma}(t)}=\{(x,y)\in\Real^{2}\mid x\in(0,L),\,y=h_{\gamma}(x,t)\}$.
 
\medskip
 
 The fluid velocity  $\ug_{\gamma}$ and the fluid pressure $p_{\gamma}$ satisfy the $2$-D incompressible Navier--Stokes equations in the fluid domain:
\begin{equation}\label{Navier-Stokes}
\begin{aligned}
\rho_{f}(\partial_{t}\ug_{\gamma} + (\ug_{\gamma}\cdot\nabla)\ug_{\gamma}) - \DIV{\sigma(\ug_{\gamma},p_{\gamma})}  &{}= 0\text{ in }\mathcal{F}_{h_{\gamma}(t)},\\
\DIV{\ug_{\gamma}}&{}=0\text{ in }\mathcal{F}_{h_{\gamma}(t)},
\end{aligned}
\end{equation}
where $\sigma(\ug_{\gamma},p_{\gamma})$ denotes the fluid stress tensor given by the Newton law:
\[\sigma(\ug_{\gamma},p_{\gamma})=\mu(\nabla\ug_{\gamma} +( \nabla\ug_{\gamma})^{T}) -p_{\gamma}I_{2}.\]
In the previous equations $\rho_{f}>0$ and $\mu>0$ are respectively the fluid density and viscosity. The structure displacement $\eta_{\gamma}$ satisfies a damped Euler--Bernoulli beam equation:
\begin{equation}\label{Beam.equation}
\rho_{s}\partial_{tt}\eta_{\gamma}-\beta\partial_{xx}\eta_{\gamma}-\gamma\partial_{xx}\partial_{t}\eta_{\gamma}+\alpha\partial_{x}^4\eta_{\gamma}=\phi(\ug_{\gamma},p_{\gamma},\eta_{\gamma})\text{ on }(0,L).
\end{equation}
The constant $\rho_{s}>0$ denotes the structure density and $\alpha,\beta,\gamma$ are non negative parameters.
Through this paper we assume that $\alpha>0$. 
This restriction guarantees sufficient regularity of the structure deformation in the compactness argument. The reader shall note for instance that we need $H^{1+\kappa} \cap W^{1,\infty}$ regularity of the deformation in Lemma \ref{lem:projector}.

\medskip

The source term $\phi$  in the right-hand side of the beam equation arises from the action--reaction principle between the fluid and the structure. It represents the force applied by the fluid on the structure. It can be defined by the variational identity
\begin{equation}
\label{def:forcefluide}
\int_{0}^{L}\phi(\ug_{\gamma},p_{\gamma},\eta_{\gamma})\cdot \varphi(x,h_{\gamma}(x))\BS{e}_{2}\dd x=\int_{\Gamma_{h_{\gamma}(t)}}\sigma(\ug_{\gamma},p_{\gamma})\normal_{\gamma}\cdot \varphi_{\vert \Gamma_{h_{\gamma}(t)}}\BS{e}_{2}\dd \Gamma_{h_{\gamma}(t)},
\end{equation}
for any regular test function $\varphi$ and where $\displaystyle\normal_{\gamma}$ denotes the unit exterior normal to the deformed interface: 
\[
\normal_{\gamma}=\frac{1}{\sqrt{1+(\partial_{x}\eta_{\gamma})^{2}}}\begin{pmatrix}
-\partial_{x}\eta_{\gamma}\\
1
\end{pmatrix}.
\] 
Since the fluid is viscous the following kinematic condition holds true at the interface 
\begin{equation}\label{interface.condition}
\ug_{\gamma}(x,h_\gamma(x,t),t)=\partial_{t}\eta_{\gamma}(x,t)\BS{e}_{2}\text{ on } (0, L)\times (0, T).
\end{equation}
We complement the fluid and structure boundary conditions with
\begin{equation}\label{fluid.boundary.condition}
\begin{aligned}
&\ug_{\gamma}=0\text{ on }(0,L)\times \{0\},\\
&\eta_{\gamma}\text{ and }\ug_{\gamma}\text{ are }L\text{-periodic with respect to }x.
\end{aligned}
\end{equation}
Note that  the kinematic condition \eqref{interface.condition} together with the incompressibility constraint of the fluid velocity   imply that, by taking into account the boundary conditions \eqref{fluid.boundary.condition},
\begin{equation}\label{compatibilite-vitesse-structure}
\int_0^L\partial_t \eta_\gamma(t,x) {\rm d}x =0.
\end{equation}
This condition states that the volume of the fluid cavity is preserved.
This condition implies that the pressure $p_{\gamma}$ is uniquely determined in contrast with classical fluid--solid interaction problems.
Finally  the fluid--structure system is completed with the following initial conditions
\begin{equation}\label{initial.conditions}
\begin{aligned}
&\eta_{\gamma}(0)=\eta^{0}_\gamma\text{ and }\partial_{t}\eta_{\gamma}(0)=\eta_\gamma^{1}\text{ in }(0,L),\\
&\ug_{\gamma}(0)=\ug_\gamma^{0}\text{ in }\mathcal{F}_{h_\gamma^{0}}\text{ with }h_\gamma^{0}=1+\eta_\gamma^{0}.
\end{aligned}
\end{equation}
\begin{remark}\label{rem:korn}
As already noted in \cite{Chambolle-etal}, due to the incompressibility constraint and the only transverse displacement of the beam
$$\left((\nabla \ug_\gamma)^T\cdot\normal_\gamma\right)_2=0, \text{ on }\Gamma_{h_{\gamma}(t)}.$$
It implies that the force applied by the fluid on the beam can be defined as follows
$$
\int_{0}^{L}\phi(\ug_{\gamma},p_{\gamma},\eta_{\gamma})\cdot \varphi(x,h_{\gamma}(x))\BS{e}_{2}\dd x=\int_{\Gamma_{h_{\gamma}(t)}}(\nabla\ug_{\gamma}-p_{\gamma}I_2)\normal_{\gamma}\cdot \varphi_{\vert\Gamma_{h_{\gamma}(t)}}\BS{e}_{2}\dd\Gamma_{h_{\gamma}(t)}.$$
For the same reason, a Korn equality also holds true
$$
\int_{\mathcal{F}_{h_\gamma(t)}} \vert\nabla\ug_{\gamma} +( \nabla\ug_{\gamma})^{T}\vert^2 =2\int_{\mathcal{F}_{h_\gamma(t)}} \vert\nabla\ug_{\gamma} \vert^2.
$$
\end{remark}

The fluid--structure system \eqref{Navier-Stokes}--\eqref{initial.conditions} is denoted by $(FS)_{\gamma}$ and $(FS)_0$ corresponds to the system with $\gamma=0$ for which we are going to prove the existence of a global weak solution. 
The case where $\gamma>0$ is the one considered in \cite{Grandmont-Hillairet}. It is proven therein that the structure does not touch the bottom of the fluid cavity, namely $\min_{x\in (0, L)}(1+\eta_\gamma(x, t)) >0$, for all $t$, implying the  existence of a unique global strong solution. In the case $\gamma=0,$ it is proven in \cite{Grandmont08, Muha-Canic13}  that a weak solution exists as long as the structure does  not touch the bottom of the fluid cavity. In this paper, we  investigate the vanishing viscosity limit ({\em i.e.} $\gamma\rightarrow 0$) and prove the convergence, up to the extraction of a subsequence, of the sequence of strong solutions $(\ug_\gamma,\eta_\gamma)$ solutions of $(FS)_{\gamma}$ defined on any time interval $(0, T)$ towards $(\ug, \eta)$ a weak solution (to be properly defined later on) of $(FS)_0$. Note that  at the limit  we loose the no contact property and have only:
 $\min_{x\in (0, L)}(1+\eta(x, t))\geq 0$, for all $t$. One key issue is thus to define an appropriate framework in case of contact. Moreover, and  as it is standard for this kind of fluid--structure coupled problem, another important difficulty comes from the obtention of  strong compactness of the approximate velocities. Such a property is mandatory in order to pass to the limit in the convective terms. 
 
 \medskip
 
 To conclude this introductory part, we point out that we do not address here the uniqueness of solutions. One reason is the lack of contact dynamics that should be added in case of contact, Hence, it is likely that our definition of weak solution below allows various rebounds of the elastic structure in case of contact (and consequently various solutions),  one (or several) of these solutions coming from the construction process we consider herein. This issue is now well--identified in the fluid--solid framework \cite{Starovoitov03}.

\medskip
 
The rest of the paper splits into two sections. In the next section, we introduce and analyze a functional setting, we give the definition of weak solutions and state the main result.
 The last section is devoted to the proof of the existence result following  standard steps: construction of a sequence of approximate solutions, derivation of compactness properties, passage to the limit. In the appendix, detailed proofs of technical lemma are given.

\section{Problem setting}
In this section we first recall the energy estimates satisfied by any regular enough solution of the coupled problem. We  then  construct functional spaces  and  introduce a notion of weak solution relying on these energy estimates  and  compatible with a contact. Finally, we provide the rigorous statement of our main result and some technical lemma necessary to the following analysis.

\subsection{Energy estimates}
Let $\gamma \geq 0 $ and assume that  $(\ug_\gamma, \eta_\gamma)$ is a classical solution to $(FS)_{\gamma}.$ Let then multiply the first equation of Navier--Stokes system \eqref{Navier-Stokes} by the fluid velocity $\ug_\gamma$ and integrate over $\mathcal{F}_{h_\gamma(t)}$. Let also multiply the beam equation \eqref{Beam.equation} with the structure velocity $\partial_t\eta_{\gamma}$ and integrate over $(0, L)$. By adding these two contributions, after integration by parts in space -- and by taking into account the coupling conditions (definition  \eqref{def:forcefluide} of $\phi$ and the kinematic condition \eqref{interface.condition}), the boundary conditions \eqref{fluid.boundary.condition} together with the incompressibility constraint and Remark \ref{rem:korn} -- we obtain
\begin{equation}\label{energy.equality}
\begin{aligned}
&\frac{1}{2}\frac{d}{dt}\left(\rho_{f}\int_{\mathcal{F}_{h_{\gamma}(t)}}\vert\ug_{\gamma}(t, \xg) \vert^{2} \dd\xg+ \rho_{s}\int_{0}^{L}\vert\partial_{t}\eta_{\gamma}(t, x) \vert^{2}\dd x+\beta\int_{0}^{L}\vert\partial_{x}\eta_{\gamma}(t, x) \vert^{2}\dd x+\alpha\int_{0}^{L}\vert\partial_{xx}\eta_{\gamma}(t, x)\vert^{2} \dd x\right)\\
&+\gamma\int_{0}^{L}\vert\partial_{tx}\eta_{\gamma}(t, x)\vert^{2}  \dd x+ \mu\int_{\mathcal{F}_{h_{\gamma}(t)}}\vert \nabla\ug_{\gamma}(t, \xg) \vert^{2}\dd \xg=0.
\end{aligned}
\end{equation} 
Note that we have used here  that the set ${\mathcal{F}_{h_{\gamma}(t)}}$ moves with the velocity field $\ug_{\gamma}$ thanks to the equality of velocities at the interface \eqref{interface.condition}, that implies
$$\int_{\mathcal{F}_{h_{\gamma}(t)}}(\partial_t\ug_\gamma(t, \xg)+(\ug_{\gamma}(t, \xg)\cdot\nabla )\ug_{\gamma}(t, \xg))\cdot \ug_{\gamma}(t, \xg)\dd\xg = \frac{1}{2}\frac{d}{dt}\int_{\mathcal{F}_{h_{\gamma}(t)}}\vert\ug_{\gamma}(t, \xg) \vert^{2} \dd\xg .$$
For $t >0,$ integrating \eqref{energy.equality} over $(0, t)$ leads to
\begin{equation}\label{energy.estimates}
\begin{aligned}
&\frac{1}{2}\left(\rho_{f}\int_{\mathcal{F}_{h_{\gamma}(t)}}\vert\ug_{\gamma}(t, \xg) \vert^{2} \dd\xg + \rho_{s}\int_{0}^{L}\vert\partial_{t}\eta_{\gamma}(t, x) \vert^{2} \dd x+\beta\int_{0}^{L}\vert\partial_{x}\eta_{\gamma}(t, x) \vert^{2} \dd x+\alpha\int_{0}^{L}\vert\partial_{xx}\eta_{\gamma}(t, x) \vert^{2} \dd x\right.\\
&\left.+\gamma\int_0^t\int_{0}^{L}\vert\partial_{tx}\eta_{\gamma}(s, x) \vert^{2} \dd x \dd s+ \mu\int_0^t\int_{\mathcal{F}_{h_{\gamma}(s)}}\vert \nabla\ug_{\gamma}(s, \xg) \vert^{2} \dd\xg\dd s \right)=\\
 &\frac{1}{2}\left(\rho_{f}\int_{\mathcal{F}_{h^0_{\gamma}}}\vert\ug^0_{\gamma}\vert^{2} \dd \xg+ \rho_{s}\int_{0}^{L}\vert\eta^1_{\gamma}\vert^{2}\dd x+\beta\int_{0}^{L}\vert\partial_{x}\eta^0_{\gamma}\vert^{2}\dd x+\alpha\int_{0}^{L}\vert\partial_{xx}\eta^0_{\gamma}\vert^{2}\dd x\right).
\end{aligned}
\end{equation}

As a consequence, we observe that, if $(\ug^0_\gamma, \eta^1_\gamma, \eta^0_\gamma)$ are such that the right-hand side of \eqref{energy.estimates} is uniformly bounded with respect to the viscosity parameter $\gamma \geq 0$, we have in particular 
\[
\eta_{\gamma}\text{ is uniformly bounded in }L^{\infty}(0,T;H^{2}_{\sharp}(0,L))\cap W^{1,\infty}(0,T;L_{\sharp}^{2}(0,L)),
\]
where the subscript $\sharp$ denotes spaces of periodic functions with respect to $x$.
Thus the associated interface displacements $(\eta_\gamma)_{\gamma \geq 0}$ are uniformly bounded at least in $\mathcal{C}^{0}([0,T];\mathcal{C}^{1}_{\sharp}(0,L))$ thanks to the compact embedding
\begin{equation}
L^{\infty}(0,T;H^{2}_{\sharp}(0,L))\cap W^{1,\infty}(0,T;L_{\sharp}^{2}(0,L))\hookrightarrow  \mathcal{C}^{0,1-s}([0,T];\mathcal{C}^{1,2s-\frac{3}{2}}_{\sharp}(0,L)), \qquad \forall \, \tfrac{3}{4}<s<1.
\label{inclusion-eta}
\end{equation}
Then,  there exists  $M>0$ depending on the initial data and independent of $\gamma$ such that
\begin{equation}
\label{borne.eta}
0\leq 1+\eta_\gamma (t, x) \leq M , \quad \forall (x, t) \in [0, L]\times [0, T], \quad  \forall \, \gamma  \geq 0.
\end{equation}

Finally, to define our functional setting, we rely below on the assumption  that the initial data $(\ug^0, \eta^1, \eta^0)$ associated to $(FS)_0$ do satisfy the assumption that the right-hand side of \eqref{energy.estimates} is finite (for $\gamma =0$).  So that we have at-hand an upper bound $M>0$ for the structure deformation $h=1+\eta$ for any physically reasonable solution.
The above computations show also that, up to a good choice of regularized initial data 
the same functional framework can be used to describe the solutions to the damped system $(FS)_{\gamma}$ (for $\gamma >0$).

\subsection{Functional spaces}
We design now a functional framework compatible with possible contact between the structure and the bottom of the fluid cavity. The parameter $M > 0$ is fixed in the whole construction.

\medskip

Given a non-negative function $h \in \mathcal{C}^{1}_{\sharp}(0,L)$  such that $0\leq h  \leq M$ we recall that we denote:
\[
\mathcal{F}_{h}=\{(x,y)\in\mathbb{R}^{2}\mid 0<x<L,\,0<y<h(x)\}.
\]

In case $h$ vanishes two crucial difficulties appear. 
First, the set $\mathcal{F}_{h}$ does not remain  connected (see Figure \ref{fig_cusp}, the domain below the graph splits into a connected component between the red dots and a connected component outside the red dots). In particular, if $h$ is the deformation of a structure associated with a solution $(\ug,p,\eta)$ to
$(FS)_0,$ we may expect that  the condition \eqref{compatibilite-vitesse-structure} must be satisfied on each  time--dependent connected component of the subset $\{x  \in (0, L) \text{ s.t. } h(x) >0\}$ and not only globally on $(0,L).$
Secondly, the boundaries of $\mathcal{F}_{h}$ contain at least one ``cusp'' so that it does not satisfy the cone property (see \cite{Adams}). As a consequence, one must be careful in order to define a trace  operator on $H^{1}(\mathcal F_h)$.

\begin{figure}[!ht]
\centering
\begin{tikzpicture}[scale=0.65]
\draw (0,0)node [below left] {$0$};
\draw (10,0)node [below right] {$L$};
\draw[red] (2.5,0) node {$\bullet$};
\draw[red] (8,0) node {$\bullet$};
\draw (0,0) -- (10,0);
\draw[dashed] (10,0) -- (10,5);
\draw[dashed] (0,5) -- (0,0);
\draw[red] (5.5,0.25)node [above] {$h(x)$};
\draw[red, very thick] (0,5) .. controls (1,5) and (1.5,0.1) .. (2.5,0);
\draw[red, very thick] (2.5,0) .. controls (5,0) and (5,2)  .. (6,1);
\draw[red, very thick] (6,1) .. controls (6.5,0.5) and (6.5,0.125)  .. (8,0);
\draw[red, very thick] (8,0) .. controls (10.5,0.25) and (8.5,5) .. (10,5);
\end{tikzpicture}
\caption{Example of a set with two cusps'. \label{fig_cusp}}
\end{figure}
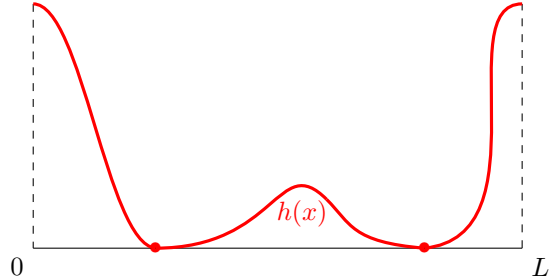

To overcome the second difficulty, we adapt the construction done in the context of fluid--solid problems in \cite{SanMartin-Starovoitov-Tucsnak}. Namely, we extend the fluid velocity fields -- by taking into account their trace on the structure --
on a time--independent domain whose regularity does not suffer from possible contacts.

\medskip

First, let us make precise some specific notations for the various domains used in the analysis. We introduce a virtual container $\Omega = (0,L) \times (-1,2M).$ This set contains a part of the substrate ($(x,y) \in (0,L) \times (-1,0)$), the fluid film
($(x,y) \in \mathcal F_h$)  and a virtual medium containing an extension of the structure (what remains of $\Omega$).  
Correspondingly, we also introduce three kinds of subsets of $\Omega.$ Given a continuous positive function $h$ we define first a subgraph domain (containing the substrate and the fluid film)
\[\mathcal{F}^-_{h}=\{(x,y)\in\Real^{2}\mid 0<x<L,\,-1<y<h(x)\},\]
then the  epigraph domain (corresponding to the virtual elastic medium)
\[\mathcal{S}_{h}=\{(x,y)\in\Real^{2}\mid 0<x<L,\,h(x)<y<2M\}.\]
Finally, for the analysis, we need also more general sets. Given $a,b:(0,L)\rightarrow \Real$ such that $a \leq b$,
we also define the set
\[\mathcal{C}_{a}^{b}=\{(x,y)\in\Real^{2}\mid 0<x<L,\,a(x)<y<b(x)\}.\]
We emphasize that there is some overlap between these notations. 
In particular, $\CM,  \mathcal{F}_{h}, \mathcal{F}^-_{h}, \mathcal{S}_{h}$ can be seen as particular cases of sets of the form $\mathcal{C}_{a}^{b}$.

For the study of non cylindrical time--dependent problems, we also need notations for space--time domains. 
We use the convention that notations for time--independent domains extend to the  time--dependent case by 
adding a hat. More precisely, we denote $\CMT=\CM\times(0,T)$ and 
\[\begin{aligned}
&\widehat{\mathcal{F}}_{h}=\bigcup_{t\in(0, T)}\mathcal{F}_{h(t)}\times\{t\}, &\widehat{\mathcal{F}}_{h}^{-}=\bigcup_{t\in(0, T)}\mathcal{F}_{h(t)}^{-}\times\{t\},\\
&\widehat{\mathcal{S}}_{h}=\bigcup_{t\in(0, T)}\mathcal{S}_{h(t)}\times\{t\}, &\widehat{\mathcal{C}}_{a}^{b}=\bigcup_{t\in(0, T)}\mathcal{C}_{a(t)}^{b(t)}\times\{t\},
\end{aligned}\]
where $h,a,b:(0,L)\times(0,T)\rightarrow \Real$ are such that $h(x,t)\geq 0$ and $a(x,t)\leq b(x,t)$ for all $(x,t)\in (0,L)\times(0,T)$. 
We will also denote by $\BS{n}_h$ the vector 
\[
\normal_{h}=\frac{1}{\sqrt{1+(\partial_{x}h)^{2}}}\begin{pmatrix}
-\partial_{x}h\\
1
\end{pmatrix}.
\] 
With these notations for the different sets, we introduce functional spaces
to which our  weak solutions will belong. 
The definition of these spaces is based on the following construction. 
Let  us first  introduce an extension operator:
\begin{definition}
\label{def:bar} 
Assume that $h\in \mathcal{C}^0_\sharp(0, L)$ with $0 \leq h \leq M.$ 
Let $\vg \in \BS{L}^2_{\sharp}(\mathcal{F}_h)$ and  $d\in L^2_\sharp(0, L)$, we define the extension operator by
$$\overline \vg= \left\{
\begin{array}{ll} 
d \BS{e}_{2}, &\text{ in } \mathcal{S}_h,\\
\vg,&\text{ in } \mathcal{F}_h,\\
\BS{0},& \text{ in } \mathcal{C}_{-1}^0.
\end{array}
\right.
$$
\end{definition}
\begin{remark}
By construction, this extension operator defines a vector field  $\overline \vg  \in \BS{L}^2_\sharp(\CM).$
In the previous definition the used symbol $\overline{\vg}$ involves only $\vg$ while the construction depends also on  $d$. In what follows, this choice is justified as we consider functions $\vg$ and $d$ satisfying the relation $\vg_{|_{y=h}}= d\BS{e}_{2}$, where $\vg_{|_{y=h}}$ denotes the function $x\mapsto \vg(x, h(x))$ on $(0, L)$.
\end{remark}
More precisely, when there is no contact this extension operator enjoys the following properties:
\begin{lemma}
\label{lem:bar}
Assume that $h\in W^{1, \infty}_\sharp(0, L)$ with $0<h(x)\leq M$ for $x\in [0, L]$ and let $s \in (0,1).$
\begin{enumerate}
\item  If $s > 1/2$  and $\vg \in \BS{H}^s_{\sharp}(\mathcal{F}_h)$ is divergence free with $\vg_{|_{y=0}}=0$, and $\vg_{|_{y=h}}=d \BS{e}_{2}$ with $d\in H^{s}_\sharp(0, L),$ we have that
\begin{align*}
& \overline{\vg} \in \BS{H}^s(\Omega)\,, \qquad  {\rm div }\, \overline{\vg}  = 0 \text{ in $\Omega$},  \qquad \overline\vg\cdot\BS{e}_1=0 \text{ in $\mathcal{S}_h$}.
\end{align*} 

\item If $0\leq s<1/2$ and $\vg \in \BS{H}^s_{\sharp}(\mathcal{F}_h)$ is divergence free with $\vg\cdot\BS{e}_2=0$ on $y=0$, and $\vg_{|_{y=h}} \cdot\BS{n}_h=(0, d)^T \cdot\BS{n}_h$ on $(0,L)$  with $d\in H^s_\sharp(0, L),$  we have that:
\begin{align*}
& \overline{\vg} \in \BS{H}^s(\Omega)\,, \qquad  {\rm div }\, \overline{\vg}  = 0 \text{ in $\Omega$},  \qquad \overline\vg\cdot\BS{e}_1=0 \text{ in $\mathcal{S}_h$}.
\end{align*} 
\item In both cases $0\leq s < 1/2$ and $s>1/2$ the extension operator is a bounded linear mapping of its arguments
whose norm can be bounded w.r.t $M$ only: 
\[
\|\overline{\vg}\|_{\BS{H}^s(\Omega)} \leq C(M)\left(  \|d\|_{H^s_{\sharp}(0,L)} + \|\vg\|_{H^s(\mathcal F_h)} \right).
\]
\end{enumerate}
\end{lemma}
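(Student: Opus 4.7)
My plan is to verify the three conclusions piecewise on the partition of $\Omega$ into $\mathcal{C}_{-1}^0$, $\mathcal{F}_h$, and $\mathcal{S}_h$, and then glue the pieces across the two interfaces $\{y=0\}$ and $\{y=h(x)\}$. On each piece, $\overline{\vg}$ is by construction in $\BS{H}^s$ and divergence free: trivially on $\mathcal{C}_{-1}^0$, by assumption on $\mathcal{F}_h$, and on $\mathcal{S}_h$ because $d(x)\BS{e}_2$ depends only on $x$. The identity $\overline{\vg}\cdot\BS{e}_1=0$ in $\mathcal{S}_h$ is immediate from the definition. The Sobolev bound on the $\mathcal{S}_h$-piece is obtained through $\mathcal{S}_h \subset (0,L)\times(0,2M)$ by Fubini in the $y$-variable, yielding $\|d\BS{e}_2\|_{\BS{H}^s(\mathcal{S}_h)} \leq C(M)\|d\|_{H^s_{\sharp}(0,L)}$.

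The core of the argument is the gluing. I would invoke the standard fact that if $f_1\in H^s(\Omega_1)$ and $f_2\in H^s(\Omega_2)$ on complementary Lipschitz subdomains, then the piecewise-defined function lies in $H^s(\Omega_1\cup\Omega_2)$ provided either $s<1/2$ (no interface condition) or $s>1/2$ with matching traces, the $H^s$-norm being controlled by the sum of the piecewise norms. Across $\{y=0\}$, case (1) uses $\vg_{|_{y=0}}=0$ to match the piece $\BS{0}$ below, while case (2) needs no matching for $H^s$-regularity. Across $\{y=h(x)\}$, case (1) uses the full identity $\vg_{|_{y=h}}=d\BS{e}_2$, while case (2) relies only on the matching of normal components. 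Two applications of the gluing lemma yield $\overline{\vg}\in \BS{H}^s(\Omega)$ together with the asserted norm estimate.

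For the divergence-free property, I would test $\overline{\vg}$ against $\nabla\varphi$ for $\varphi\in C^\infty_c(\Omega)$ and split the integral over the three pieces; an integration by parts on each piece produces a bulk contribution (zero by the piecewise divergence-free property) and interface contributions along $\{y=0\}$ and $\{y=h(x)\}$. These interface terms pair up and cancel thanks to the matching of normal traces provided by the hypotheses in (1) or (2) -- for instance $\vg\cdot\BS{n}_h = d\BS{e}_2\cdot\BS{n}_h$ on the graph ensures cancellation between the flux from $\mathcal{F}_h$ and the flux from $\mathcal{S}_h$.

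The main technical obstacle is ensuring that the continuity constant in item (3) depends only on $M$ and not on $\|\partial_x h\|_{L^\infty}$: a naive gluing across $\{y=h(x)\}$ would produce constants involving the Lipschitz norm of the graph. The $y$-independence of $d(x)\BS{e}_2$ on $\mathcal{S}_h$ is crucial here -- it permits an equivalent decomposition of $\overline{\vg}$ as the globally $y$-independent field $d(x)\BS{e}_2\mathbf{1}_{y>0}$ plus a correction supported in $\mathcal{F}_h^-$ whose trace on $\{y=h(x)\}$ vanishes (by the matching trace condition), so that its extension by zero across the graph is bounded in $H^s$ by a constant depending on $M$ alone. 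Carrying out this uniform-in-$h$ decomposition rigorously is the technical heart of the lemma.
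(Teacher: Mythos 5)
Your strategy for items (1) and (2) --- establish piecewise $\BS{H}^s$ regularity and piecewise divergence on $\mathcal C_{-1}^0$, $\mathcal F_h$, $\mathcal S_h$, then glue across the interfaces with full-trace matching for $s>1/2$ and only normal-trace matching for the distributional divergence --- is precisely the paper's argument, stated at roughly the same level of detail. The integration-by-parts test against $\nabla\varphi$ is a clean way to make the paper's remark "only the continuity of normal traces is required" precise, and the Fubini estimate on $\mathcal S_h$ is fine.

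The part that does not go through is the decomposition you propose for item (3). Set $R:=\overline\vg - d(x)\BS e_2\,\mathbf 1_{y>0}$. Then $R=0$ in $\mathcal C_{-1}^0$ and in $\mathcal S_h$, while $R=\vg - d\BS e_2$ in $\mathcal F_h$. Taking the trace of $R$ on the flat interface $\{y=0\}$ from the $\mathcal F_h$ side gives $\vg_{|_{y=0}}-d\BS e_2=-d\BS e_2$, whereas the trace from $\mathcal C_{-1}^0$ is $0$. So $R$ has a jump of $d\BS e_2$ across $\{y=0\}$ and therefore is \emph{not} in $\BS H^s(\Omega)$ for $s>1/2$, which is exactly case (1), where you need it. In effect the decomposition relocates the trace mismatch from the graph $\{y=h\}$ to the bottom interface instead of removing it. Any repair would require a lifting of $d$ that both equals $d\BS e_2$ on the graph and vanishes at $y=0$ (a cut-off along the lines of the paper's $\mathcal R_\lambda$), and such a lifting brings in a dependence on the gap between the two interfaces (hence on $\min h$, or on the cut-off scale $\lambda$), so an $M$-only constant is still not manifest. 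To be fair, the paper's own one-paragraph proof says nothing at all about the constant and hence does not establish item (3) as stated either; your concern that a naive gluing across $\{y=h(x)\}$ introduces a dependence on $\|\partial_x h\|_{L^\infty_\sharp}$ is legitimate. But the fix you sketch is incorrect as written, and this is the concrete gap in your proposal.
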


\begin{proof}
We note that in both cases $\bar{\vg}$ is by construction piecewisely divergence free and belongs to $\BS{H}^s$ (in the sets  $ \mathcal{S}_h$, $ \mathcal{F}_h,$ $ \mathcal{C}_{-1}^0$). Consequently, in case (2) the extension is straightforwardly in $\BS{H}^s(\Omega).$ Only the continuity of normal traces is required to yield a global divergence free vector field. In Case (1) we require continuity of the full trace to obtain an $\BS{H}^s(\Omega)$ 
vector field.
\end{proof}

\begin{remark} 
 \label{rem:domaine}
 \begin{enumerate}[(i)]
 \item In the case $\min_{x\in [0, L]} h(x)\geq 0$,  we may extend vector fields defined on $\mathcal{F}^-_h$ with a similar bar-operator. Then, similar results for this extension operator hold true.
 
\item In  Lemma \ref{lem:bar} and in what follows, in order to avoid to denote the trace by the classical symbol $\gamma$, which is reserved here to the added viscosity on the structure, we denote by ${\boldsymbol v}_{|_{y=h}}$ the trace  of $\vg$ defined as ${\boldsymbol v}_{|_{y=h}}(x)=\vg(x, h(x))$. We note that when $h(x)>0$ for all $x\in [0, L]$, the associated linear trace operator is well defined from $H_\sharp^1(\mathcal{F}_h)$  into $H_\sharp^{\frac{1}{2}}(0, L)$. In the case where $h(x)\geq0$ for all $x\in [0, L]$ it is well defined from $H_\sharp^1(\mathcal{F}^-_h)$ into $H_\sharp^{\frac{1}{2}}(0, L)$.  It is easy to verify that 
\begin{equation}
\label{est:trace}
\|\vg_{|_{y=h}}\|_{H^\frac{1}{2}_\sharp(0, L)}\leq C(\|h\|_{W_\sharp^{1, \infty}(0, L)}) \|\vg\|_{H^1_\sharp(\mathcal{F}^-_h)}.
\end{equation}

\end{enumerate} 
\end{remark}
Consequently, for a $W^{1, \infty}_{\sharp}(0,L)$--function $h$ satisfying $0 \leq h(x) \leq M$, for $x\in [0, L]$ and for $s \in (0,1),$ we set
\begin{align}
&K^{s}[h]=\{\vg\in \BS{H}^{s}_{\sharp}(\CM)\mid \DIV{\vg}=0\text{ in }\CM,\,\vg=0\text{ in }\mathcal{C}_{-1}^{0},\,\,\vg\cdot\BS{e}_{1}=0\text{ in }\mathcal{S}_{h}\},\label{espace.Ks}\\
&X^{s}[h]=\{(\wg,d)\in K^{s}[h]\times (H^{2s}_{\sharp}(0,L)\cap L_{\sharp, 0}^{2}(0,L))\mid {w_{2}}_{|_{(0, L)\times \{M\}}}=d\},\label{espace.Xs}
\end{align}
where 
$$L_{\sharp, 0}^{2}(0,L)= \left\{d\in L_{\sharp}^{2}(0,L) \, s. t. \, \int_0^L d =0\right\}.$$
When $s=0$ we denote $K[h] = K^0[h]$ and $X[h] = X^0[h].$

Under the assumptions of Lemma \ref{lem:bar} 
we have that $\bar{\vg} \in K^s[h]$ and $(\bar{\vg},d) \in X^s[h]$ in both cases $s \in (0,1/2)$ and $s \in (1/2,1)$.
We emphasize that,  for any $\vg\in K[h]$, the divergence free condition implies that the trace on ${(0, L)\times \{M\}}$ of ${v_{2}}=\vg\cdot\BS{n}$ has a sense in $H^{-1/2}_\sharp(0, L)$. Similarly $\vg_{|_{y=h}}\cdot \BS{n}_h$   also makes sense in $H^{-1/2}_\sharp(0, L)$. Following the construction of the extension operator above, one expects this trace to represent the structure velocity. 
 \medskip 
 Correspondingly, we introduce smooth variants of these functional spaces $\mathcal{K}[h]$ and  $\mathcal{X}[h]$ defined by
\begin{align}
&\mathcal{K}[h]=\{\wg\in\mathcal{C}^{\infty}_{\sharp}(\CM)\mid \DIV{\wg}=0\text{ in }\CM,\,\wg=0\text{ in }\mathcal{V}(\mathcal{C}_{-1}^{0}),\,\wg\cdot\BS{e}_{1}=0\text{ in } \mathcal V(\mathcal{S}_{h})\},\\
&\mathcal{X}[h]=\{(\wg,d)\in \mathcal{K}[h]\times  \mathcal{C}^{\infty}_{\sharp}(0,L)\mid {w_{2}}_{|_{(0, L)\times \{M\}}}=d\}.
\end{align}
Here, we used``in $\mathcal{V}(\mathcal O)$'' as a shortcut for the  statement ``in a neighbourhood of the open set $\mathcal O$".

\medskip

Before defining  the weak solutions, we now verify  that the previous coupled spaces  encode
the fluid--structure nature of the problem and behave correctly (from an analytical standpoint). 
Once again, $h$ stands for a non--negative $W^{1, \infty}$--function satisfying $0\leq h \leq M$. 
The space $X[h]$ is endowed with the scalar product
\begin{equation}
\label{def:produitscalaire}
\langle (\ug,\overset{\cdot}{\eta}),(\wg,d)\rangle_{X[h]}:=\rho_{f}\int_{\CM}\ug\cdot\wg + \rho_{s}\int_{0}^{L}\overset{\cdot}{\eta}d,\end{equation}
and  we endow the spaces $X^s[h]$ with a Hilbert structure associated with the norms
\[
\|(\wg,d)\|_{X^s[h]}=  \|\wg\|_{H^s(\Omega)} +  \|d\|_{H^{2s}(0,L)}.
\]
For $s=0$ this Hilbert-norm does not correspond to the scalar product as defined in \eqref{def:produitscalaire} 
but the topologies are equivalent since $\rho_f$ and $\rho_s$ are both positive.

In order to prove the fluid--structure property, we show in the following lemma that,
in the ``virtual medium", the velocity--fields in $X[h]$ coincide with a structure velocity.
\begin{lemma}Let $\vg  \in K[h].$ There exists $d\in L_{\sharp}^{2}(0,L)$ such that $\vg=d\BS{e}_{2}$ in $\mathcal{S}_{h}$. \end{lemma}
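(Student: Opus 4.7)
The plan is to exploit that in $\mathcal{S}_h$ the field $\vg=(v_1,v_2)$ has $v_1\equiv 0$ by definition of $K[h]$, so the global divergence-free condition forces $\partial_y v_2 = 0$ as a distribution on $\mathcal{S}_h$; the conclusion will then follow by integrating out the $y$-dependence. First, for any $\varphi\in\mathcal{D}(\mathcal{S}_h)$, extended by zero to $\Omega$, one has $\varphi\in\mathcal{D}(\Omega)$, and $\DIV\vg=0$ in $\Omega$ reads
\[
0 = \int_{\Omega}\vg\cdot\nabla\varphi\,\dd\xg = \int_{\mathcal{S}_h}\bigl(v_1\partial_x\varphi + v_2\partial_y\varphi\bigr)\,\dd\xg = \int_{\mathcal{S}_h}v_2\,\partial_y\varphi\,\dd\xg,
\]
since $v_1=0$ in $\mathcal{S}_h$. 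Hence $\partial_y v_2 = 0$ in $\mathcal{D}'(\mathcal{S}_h)$.

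Next, using $h\in W^{1,\infty}_{\sharp}(0,L)$ with $0\leq h\leq M<2M$, I introduce the bi-Lipschitz change of variables
\[
\Phi:(0,L)\times(0,1)\to\mathcal{S}_h,\qquad \Phi(x,s)=(x,\,h(x)+s(2M-h(x))),
\]
whose Jacobian equals $2M-h(x)\in[M,2M]$. Setting $w:=v_2\circ\Phi$, the change-of-variables formula gives $w\in L^2((0,L)\times(0,1))$, and the chain rule for bi-Lipschitz maps yields $\partial_s w = (2M-h)\,(\partial_y v_2)\circ\Phi = 0$ in $\mathcal{D}'((0,L)\times(0,1))$. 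On such a product domain, a classical distributional argument (density of tensor-product test functions) implies $w(x,s)=d(x)$ for some $d\in L^2(0,L)$. Returning via $\Phi^{-1}$, we conclude $v_2(x,y)=d(x)$ a.e.\ in $\mathcal{S}_h$, and since $v_1=0$ in $\mathcal{S}_h$ this yields the announced decomposition $\vg = d\,\BS{e}_2$ in $\mathcal{S}_h$. Periodicity $d\in L^2_\sharp(0,L)$ follows from that of $\vg\in\BS{L}^2_\sharp(\Omega)$, read for instance on the line $\{y=2M\}\subset\mathcal{S}_h$.

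The step requiring the most care is the distributional chain rule under $\Phi$, since $h$ is only Lipschitz; this is standard for bi-Lipschitz maps but deserves a short justification. An equivalent slice-based route avoids this: by Fubini, for a.e.\ $x\in(0,L)$ one has $v_2(x,\cdot)\in L^2(h(x),2M)$ with zero distributional derivative on the connected interval $(h(x),2M)$, hence equal to a constant $d(x)$; the map $x\mapsto d(x)$ is then in $L^2(0,L)$, and the local pieces match on overlapping subintervals thanks to the continuity of $h$.
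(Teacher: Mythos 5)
Your proposal is correct and follows essentially the same route as the paper: both use $\vg\cdot\BS{e}_1=0$ in $\mathcal{S}_h$ together with the divergence-free constraint to get $\partial_y v_2=0$ distributionally, and then conclude $v_2=d(x)$, with the $L^2_\sharp$ bound on $d$ inherited from $\vg\in\BS{L}^2_\sharp(\Omega)$ (note: to make that last point precise, integrate over the box $(0,L)\times(M,2M)\subset\mathcal{S}_h$ rather than reading $v_2$ on the line $\{y=2M\}$, since an $L^2$ function has no trace there). The paper compresses the middle step into a single sentence; you supply two standard ways to justify it, which is fine.
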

\begin{proof}
By definition we have $\vg=(0,v_{2})^{\top}$ in $\mathcal{S}_{h}$. Moreover the divergence condition
$\DIV{\vg}=\partial_yv_{2}=0\text{ in }\mathcal{S}_{h},$
implies that $v_{2}(x,y)=v_{2}(x)$ in $\mathcal{S}_{h}$. Since $\vg\in \BS{L}^2_\sharp(\CM)$ and $0\leq h(x) \leq M, \forall x\in [0, L]$, we have $v_2\in L_\sharp^2(0, L)$. 
\end{proof}

\medskip

Given a divergence free $\wg \in {L}^2_{\sharp}(\Omega)$
it is classical that we can construct a stream function $\Psi \in {H}^1_{\sharp}(\Omega)$  such that $\wg = \nabla^{\bot} \Psi.$ We show in the following lemma some additional properties satisfied by the stream function of an extended-field in $K[h]$:

\begin{lemma}\label{lemma.X[h]}
Let $(\wg, d)\in X[h]$ and set $I=\{x\in[0,L]\mid h(x)>0\}$. There exists $\Psi \in {H}^1_\sharp(\Omega)$ such that $\wg=\nabla^\perp\Psi$ which furthermore satisfies
\begin{itemize}
\item $\Psi(x,y)=b(x)$ in $\mathcal{S}_h$ with $b\in H^{1}_{\sharp}(0,L)$;
\item $\Psi =0$ in $\mathcal{C}_{-1}^0$;
\item $\Psi=0$ in $I^{c}\times(-1,2M)$.
\end{itemize}
\end{lemma}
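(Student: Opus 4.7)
The plan is to define $\Psi$ by the explicit antiderivative
\[
\Psi(x, y) := \int_{-1}^{y} w_{1}(x, z) \, \dd z,
\]
which makes sense as an element of $L^{2}_{\sharp}(\CM)$ by Fubini, and then to read off the three required properties directly from this formula.

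The first task is to verify that $\Psi \in H^{1}_{\sharp}(\CM)$ with $\wg = \nabla^{\perp}\Psi$, using the convention $\nabla^{\perp}\Psi = (\partial_{y}\Psi, -\partial_{x}\Psi)$. The identity $\partial_{y}\Psi = w_{1} \in L^{2}$ is immediate from the formula. The delicate point is $\partial_{x}\Psi = -w_{2}$: for an $L^{2}$ field with only distributional divergence one cannot naively differentiate under the integral sign. This step is handled by mollification. Approximate $\wg$ by smooth, $L$-periodic, divergence-free fields $\wg_{\varepsilon}$ that still vanish in a neighbourhood of $\{y = -1\}$; such approximants are obtained by first extending $\wg$ by zero to $(0,L) \times (-2, 2M)$ and then mollifying jointly in $(x,y)$, using the $x$-periodicity to regularize tangentially. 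For each $\wg_{\varepsilon}$ the identity $\partial_{x} w_{1, \varepsilon} = -\partial_{y} w_{2, \varepsilon}$ combined with $w_{2,\varepsilon}(\cdot, -1) = 0$ gives the pointwise equality $\partial_{x}\Psi_{\varepsilon} = -w_{2, \varepsilon}$; passing to the limit yields $\partial_{x}\Psi = -w_{2} \in L^{2}$. Periodicity of $\Psi$ in $x$ is inherited from that of $w_{1}$.

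Once $\Psi \in H^{1}_{\sharp}(\CM)$ is secured, the three asserted properties follow by inspection of the formula together with the structural information on $\wg$. For the vanishing in $\mathcal{C}_{-1}^{0}$: if $(x, y) \in \mathcal{C}_{-1}^{0}$ then $w_{1}(x, z) = 0$ for $z \in (-1, y)$, hence $\Psi(x, y) = 0$. For the reduction in $\mathcal{S}_{h}$: the preceding lemma furnishes $\wg = d\,\BS{e}_{2}$ in $\mathcal{S}_{h}$, so $w_{1}$ vanishes there; splitting the integral at $z = h(x)$ yields
\[
\Psi(x, y) = \int_{-1}^{h(x)} w_{1}(x, z) \, \dd z =: b(x),
\]
independent of $y$. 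The regularity $b \in H^{1}_{\sharp}(0, L)$ follows by picking the horizontal slice $y = 3M/2$ (which lies in $\mathcal{S}_{h}$ for every $x$, since $h \leq M$): $b(x) = \Psi(x, 3M/2)$, so $b'(x) = \partial_{x}\Psi(x, 3M/2) = -d(x) \in L^{2}(0,L)$, and $b(L) - b(0) = -\int_{0}^{L} d(x) \, \dd x = 0$ by the mean-zero constraint $d \in L^{2}_{\sharp, 0}(0, L)$, giving $L$-periodicity. For the vanishing on $I^{c} \times (-1, 2M)$: if $x \in I^{c}$ then $h(x) = 0$, so the whole vertical line $\{x\} \times (-1, 2M)$ lies in $\mathcal{C}_{-1}^{0} \cup \mathcal{S}_{h}$, where $w_{1} \equiv 0$, and the formula forces $\Psi(x, y) = 0$ for every $y$.

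The main obstacle is the first step: rigorously establishing $\partial_{x}\Psi = -w_{2} \in L^{2}$ from the explicit formula when $\wg$ has only $L^{2}$ regularity. Once that is achieved via the mollification sketched above, the remaining conclusions are bookkeeping against the already-established structure of elements of $K[h]$ and the representation of $\wg$ on $\mathcal{S}_{h}$ provided by the previous lemma.
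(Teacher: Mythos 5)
Your explicit construction $\Psi(x,y)=\int_{-1}^{y}w_{1}(x,z)\,\dd z$ is a legitimate alternative to the paper's route (which invokes the classical existence of a periodic stream function for a divergence-free $L^{2}$-field and fixes the free additive constant by imposing $\Psi\equiv 0$ on $\mathcal{C}^{0}_{-1}$, where $\nabla\Psi=0$). Your verification that $\Psi\in H^{1}_{\sharp}(\Omega)$, the vanishing in $\mathcal{C}^{0}_{-1}$, and the representation $\Psi=b(x)$ in $\mathcal{S}_{h}$ with $b\in H^{1}_{\sharp}(0,L)$ are essentially correct; a minor fix is to replace the fixed slice $y=3M/2$ by ``a.e.\ $y_{0}\in(M,2M)$'', since $\partial_{x}\Psi$ is merely an $L^{2}(\Omega)$-function whose trace on a single horizontal line is undefined, whereas Fubini gives $b'=\partial_{x}\Psi(\cdot,y_{0})=-d$ for a.e.\ $y_{0}$.

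The third bullet, however, contains a genuine gap. You assert that for $x\in I^{c}$ the vertical line $\{x\}\times(-1,2M)$ lies in $\mathcal{C}^{0}_{-1}\cup\mathcal{S}_{h}$ where ``$w_{1}\equiv 0$, and the formula forces $\Psi(x,y)=0$ for every $y$''. But $w_{1}=0$ holds only \emph{almost everywhere} in $\mathcal{C}^{0}_{-1}\cup\mathcal{S}_{h}$, so Fubini yields $\Psi(x,\cdot)\equiv 0$ only for almost every $x$ with $h(x)=0$; since $I^{c}$ may be a Lebesgue-null subset of $[0,L]$, this says nothing about a given $a\in I^{c}$. The assertion ``$\Psi=0$ in $I^{c}\times(-1,2M)$'' has to be read as a statement about the $H^{1/2}$-trace of $\Psi\in H^{1}_{\sharp}(\Omega)$ on the vertical line $\{a\}\times(-1,2M)$, and the a.e.\ antiderivative formula does not control that trace. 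Nor can the gap be closed by the continuity of $b\in H^{1}_{\sharp}(0,L)\hookrightarrow\mathcal{C}^{0}_{\sharp}(0,L)$ combined with the a.e.\ bound $|b(x)|\leq\sqrt{h(x)}\,\|w_{1}(x,\cdot)\|_{L^{2}(-1,2M)}$, because $\|w_{1}(x_{n},\cdot)\|_{L^{2}}$ may blow up along any sequence $x_{n}\to a$. The paper resolves this by a trace argument: the trace of $\Psi$ on $\{a\}\times(-1,2M)$ belongs to $H^{1/2}(-1,2M)$, equals $0$ on $(-1,0)$ (from $\Psi\equiv 0$ on $\mathcal{C}^{0}_{-1}$) and the constant $b(a)$ on $(0,2M)$ (from $\Psi=b$ on $\mathcal{S}_{h}$ and $b$ continuous); since a step function with a nonzero jump has infinite $H^{1/2}$-seminorm, $b(a)=0$. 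You need to insert an argument of this type to complete the third point.
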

\begin{proof}
We note that $\Psi$ is defined up to an additive constant.
However, in $\mathcal{C}_{-1}^0$ we have $\wg=0$, so that we fix this constant by choosing $\Psi=0$ in $\mathcal{C}_{-1}^0$. Then, 
due to the previous lemma  $\wg_{\vert_{(0, L)\times \{M\}}}= d\BS{e}_{2}$ and the identity $\wg=d\BS{e}_{2}$ holds in $\mathcal{S}_{h}$. Thus $\partial_{x}\Psi=d$ in $\mathcal{S}_{h}$ and $\Psi(x,y)=b(x)$ in $\mathcal{S}_{h}$ where $b\in H^{1}_{\sharp}(0,L)$ satisfies $\partial_x b = d$. Remark that the $L$--periodicity of $b$ is ensured by $\int_{0}^{L}d(s)\dd s =0$. 

\medskip

Concerning the last point of the lemma, we emphasize that,
since $\Psi \in \BS{H}^1_{\sharp}(\Omega)$, its trace is well defined
on vertical lines $x=cst.$ Consequently, the value of $\Psi$
on $I^{c} \times (-1,2M)$ is well defined, whatever the topological properties of $I^c$ are. Now, given $a \in I^{c}$ (assuming $I^{c}$ is non-empty), we have $h(a)=0$ (by definition of $I^c$). The identity $\Psi(x,y)=b(x)$ in $ \mathcal{S}_{h}$, with  $b\in H^{1}_{\sharp}(0,L)$, implies that $\Psi\in\mathcal{C}_\sharp^{0}(\mathcal{S}_{h})$. In particular, $\Psi$ is equal to a constant $b(a)$ on $\{a\}\times(-1,2M)$. Moreover, the function $\Psi$ is equal to $0$ on $\mathcal{C}_{-1}^{0}$. 
Finally, applying by a trace argument that  $\Psi \in H^{1/2}(\{a\}\times (-1,2M))$ and using for example the following definition of the $H^{1/2}$-norm
\[\norme{\Psi}_{H^{1/2}(\{a\}\times(-1,2M))}^{2}=\int_{\{a\}\times(-1,2M)}\Psi^{2} + \int_{\{a\}\times(-1,2M)}\int_{\{a\}\times(-1,2M)}\frac{\vert\Psi(a,x)-\Psi(a,y)\vert^{2}}{\vert x-y\vert^{2}},\]
we get that the trace of  $\Psi$ cannot ``jump'' in $y=0.$ 
Therefore, we have  $b(a)=0$ and $\Psi=0$ on $\{a\}\times(-1,2M)$. 
\end{proof}

\medskip

We conclude this preliminary analysis of the space $X[h]$
by showing that we have density of smooth vector fields in 
$X[h]$. This is made precise in the following lemma:

\begin{lemma}\label{lemma.density}The embedding $X[h]\cap (\mathcal{C}_\sharp^{\infty}(\overline{\CM})\times\mathcal{C}_\sharp^{\infty}(0, L))\subset X[h]$ is dense.
\end{lemma}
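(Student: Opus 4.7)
Given $(\wg,d) \in X[h]$, I reduce the problem to approximating the associated stream function. By Lemma \ref{lemma.X[h]}, there exist $\Psi \in H^1_\sharp(\Omega)$ and $b \in H^1_\sharp(0,L)$ with $b' = d$ such that $\wg = \nabla^\perp \Psi$, $\Psi \equiv 0$ in $\mathcal{C}_{-1}^0$ and on the vertical fibres $I^c \times (-1,2M)$ through the contact set $I^c = \{h=0\}$, and $\Psi(x,y) = b(x)$ in $\mathcal{S}_h$. It then suffices to construct $\Psi^\varepsilon \in \mathcal{C}^\infty_\sharp(\overline\Omega)$ and $b^\varepsilon \in \mathcal{C}^\infty_\sharp(0,L)$ with $\Psi^\varepsilon \equiv 0$ in an open neighbourhood of $\mathcal{C}_{-1}^0$, $\Psi^\varepsilon(x,y) \equiv b^\varepsilon(x)$ in an open neighbourhood of $\mathcal{S}_h$, and with $\nabla \Psi^\varepsilon \to \nabla \Psi$ and $(b^\varepsilon)' \to d$ in $L^2$; the pair $(\wg^\varepsilon, d^\varepsilon) := (\nabla^\perp \Psi^\varepsilon, (b^\varepsilon)')$ is then a sought element of $X[h] \cap (\mathcal{C}^\infty_\sharp(\overline\Omega) \times \mathcal{C}^\infty_\sharp(0,L))$ converging to $(\wg,d)$.

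The construction combines three successive operations, indexed by parameters $\delta, \tau > 0$, $\lambda \in (0,1)$, and $\varepsilon > 0$. First, I cut off near contact: choose $\chi^\delta \in \mathcal{C}^\infty_\sharp(0,L;[0,1])$ equal to $1$ on $\{h \geq 2\delta\}$ and supported in $\{h \geq \delta\}$, and set $\Psi^{(1)} := \chi^\delta \Psi$, $b^{(1)} := \chi^\delta b$; on $\mathrm{supp}\,\chi^\delta$ the height $h$ is bounded below by $\delta$. Next, in order to create genuine open neighbourhoods on which the structural conditions hold, I perform a vertical translation and dilation with $\tau \in (0, (1-\lambda)\delta)$:
\[
\Psi^{(2)}(x,y) := \begin{cases} 0, & y \leq \tau,\\ \Psi^{(1)}\!\left(x, (y-\tau)/\lambda\right), & \tau < y < \tau + 2M\lambda,\\ b^{(1)}(x), & y \geq \tau + 2M\lambda. \end{cases}
\]
Since $\Psi^{(1)}(\cdot, 0) \equiv 0$ and $\Psi^{(1)}(\cdot, 2M) \equiv b^{(1)}$ in the trace sense, $\Psi^{(2)}$ belongs to $H^1_\sharp(\Omega)$; it vanishes on the open neighbourhood $\{y < \tau\}$ of $\mathcal{C}_{-1}^0$ and coincides with $b^{(1)}(x)$ on the open set $\{y > \tau + \lambda h(x)\} \cup \{\chi^\delta = 0\}$, which is a neighbourhood of $\mathcal{S}_h$ thanks to $\tau < (1-\lambda)\delta \leq (1-\lambda) h(x)$ on $\mathrm{supp}\,\chi^\delta$ and the triviality of the constraint where $\chi^\delta = 0$. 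Finally, I mollify with a smooth $L$-periodic kernel $\rho_\varepsilon$ with $\varepsilon$ small enough that $\varepsilon < \tau$ and $(1+\lambda\|h'\|_\infty)\varepsilon + \tau < (1-\lambda)\delta$, so that $\Psi^\varepsilon := \Psi^{(2)} * \rho_\varepsilon \in \mathcal{C}^\infty_\sharp(\overline\Omega)$ preserves the structural properties on slightly contracted neighbourhoods, with $b^\varepsilon := b^{(1)} *_x \widetilde\rho_\varepsilon$ being the corresponding $x$-profile.

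Convergence is obtained by a diagonal extraction, sending $\varepsilon \to 0$ (standard mollification gives $\Psi^\varepsilon \to \Psi^{(2)}$ in $H^1$), then $(\tau,\lambda) \to (0,1)$ (continuity of translation and dilation on $H^1$ yields $\Psi^{(2)} \to \Psi^{(1)}$), and finally $\delta \to 0$. The main obstacle is this last step: showing $\chi^\delta \Psi \to \Psi$ and $\chi^\delta b \to b$ in $H^1$. The commutator $\Psi \nabla \chi^\delta$ carries the singular scaling $|\nabla \chi^\delta| \sim \delta^{-1}$, which must be beaten by the rate at which $\Psi$ and $b$ decay near contact. I would exploit the structure of $\Psi$: applying the 1D Poincar\'e inequality slice-by-slice,
\[
b(x)^2 = \bigl(\Psi(x,h(x)) - \Psi(x,0)\bigr)^2 \leq h(x) \int_0^{h(x)} |\partial_y \Psi(x,y)|^2\,dy,
\]
which integrates to the Hardy-type bound $b^2/h \in L^1(0,L)$, and analogously $\Psi^2/h \in L^1(\mathcal{F}_h)$. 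Combined with a suitably adapted cutoff $\chi^\delta(x) = \psi(h(x)/\delta)$ (a log-type profile if $h$ vanishes faster than linearly, or a capacity argument exploiting $\Psi|_{\{a\} \times (-1,2M)} = 0$ for $a \in I^c$), these estimates yield the required $H^1$ convergence. This Hardy/capacity analysis near the contact set is the technical heart of the proof.
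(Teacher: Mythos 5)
Your proposal and the paper's proof share the reduction to the stream function via Lemma~\ref{lemma.X[h]}, but after that they diverge in strategy. You attempt a direct, constructive approximation: cut off near contact, translate/dilate vertically, mollify, and extract a diagonal sequence. The paper instead argues by orthogonality: it shows that any $(\ug,\dot\eta)\in X[h]$ orthogonal to all smooth admissible pairs must vanish, by testing against $(\nabla^\perp(\chi_\varepsilon\Psi),\partial_x(\chi_\varepsilon b))$, where $\chi_\varepsilon$ is a cut-off in $x$ at a fixed scale $\varepsilon$ near the endpoints $a_i,b_i$ of the connected components of $I=\{h>0\}$, with $|\chi_\varepsilon'|\lesssim\varepsilon^{-1}$ and $|\chi_\varepsilon''|\lesssim\varepsilon^{-2}$. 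After integrating by parts, the commutator terms are controlled by a one-dimensional Poincar\'e inequality in $x$ from the vanishing of $\Psi$ on the vertical fibres $\{a_i\}\times(-1,2M)$: on a layer of width $\varepsilon$ the Poincar\'e constant is $\sim\varepsilon^2$, which exactly absorbs $|\chi_\varepsilon''|\lesssim\varepsilon^{-2}$, and the resulting bound $\int_{\text{layer}}|\partial_x\Psi|^2\to 0$ as $\varepsilon\to 0$.

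Your proposal has a genuine gap at precisely the step you flag as ``the technical heart''. With $\chi^\delta(x)=\psi(h(x)/\delta)$, the support of $\nabla\chi^\delta$ is $\{\delta\leq h\leq 2\delta\}$, whose $x$-extent is not $\delta$, while $|\nabla\chi^\delta|^2\sim\delta^{-2}$ there. The Hardy bound you derive gives $b^2/h\in L^1(0,L)$ and $\Psi^2/h\in L^1(\mathcal{F}_h)$, but to absorb $\delta^{-2}$ over a region where $h\sim\delta$ you would need one more power, i.e.\ $\Psi^2/h^2\in L^1(\Omega)$. This is not supplied by your estimate, and it can in fact fail in $\mathcal{S}_h$: there $\Psi=b(x)$ and $\int_{\mathcal{S}_h}\Psi^2/h^2\simeq\int(2M-h)\,b^2/h^2\,\dd x$, and an $H^1$ profile $b$ vanishing at a zero $a$ of $h$ need only satisfy $b(x)=o(|x-a|^{1/2})$, which does not make $b^2/h^2$ integrable when $h$ degenerates linearly. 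A log-type reshaping of $\psi$ does not close this by itself. The ingredient that does close it---and that the paper uses---is to cut off at a fixed $x$-scale near the component endpoints and to exploit $\Psi|_{\{a\}\times(-1,2M)}=0$ via the $x$-direction Poincar\'e inequality, so that the cut-off scale and the Poincar\'e constant cancel exactly. Your parenthetical remark about a ``capacity argument exploiting $\Psi|_{\{a\}\times(-1,2M)}=0$'' points at this, but as written it remains a sketch; the claimed $H^1$ convergence $\chi^\delta\Psi\to\Psi$ is not established.
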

\begin{proof}
 The difficulty of this proof is to deal with the case where $h$ has zeros. The main idea  is to work with the stream function 
 of the extended vector field. If we had $h(x)>0$, for all $x\in [0, L]$, a contraction in $y$ and a standard truncature and regularization argument on the stream function can be used. In the case where $h$ has zeros, one first cuts off the zeros of $h$ and then takes advantage of  the better regularity of $\Psi$ on the structure. A detailed proof is given in the Appendix \ref{Annexe}.
\end{proof}


\subsection{Weak solutions and main result.}

In this section we introduce first our weak formulation of $(FS)_{\gamma}$. 

\medskip

 We assume that the initial conditions $(\ug^{0},\eta^{0},\eta^{1})$ satisfy
\begin{align}
&\eta^{0}\in H^{2}_{\sharp}(0,L) \text{ with }\displaystyle\min_{x\in [0,L]}(1+\eta^{0})>0,\label{CI1}\\
&(\ug^0,\eta^{1})\in \BS{L}_\sharp^{2}(\mathcal{F}_{h^{0}})\times L_{\sharp, 0}^{2}(0,L),\label{CI2}\\
& \DIV{\ug^{0}}=0 \text{ in }\mathcal{F}_{h^{0}},\label{CI3}\\
&\ug^{0}\cdot \BS{n}^0 = 0 \text{ on } (0, L)\times \{0\} \text{ and } \ug^{0}(\cdot, h_0(\cdot))\cdot \BS{n}^0= (0, \eta^{1}(\cdot) )^T\cdot \BS{n}^0\text{ on }(0, L).\label{CI4}
\end{align}
We can then define $M>0$ by \eqref{borne.eta} and construct the associated $\CM.$  
We have  the following definition for a weak solution to $(FS)_{\gamma}$:

\begin{definition} \label{def.ws}
Let $(\ug^{0},\eta^{0},\eta^{1})$ satisfying \eqref{CI1}--\eqref{CI3} and $\gamma >0.$
We say that a pair $({\overline{\ug}_\gamma},\eta_\gamma)$ is a weak solution to $(FS)_\gamma$ if it satisfies the following items:
\begin{enumerate}[i)]
\item $({\overline{\ug}_\gamma},\eta_\gamma)\in L^{\infty}(0,T;\BS{L}_{\sharp}^{2}(\CM))\times \left(L^{\infty}(0,T;H^{2}_{\sharp,  0}(0,L))\cap W^{1,\infty}(0,T;L_\sharp^{2}(0,L))\right)$ with 
\[
({\overline{\ug}_\gamma}(t), \partial_t\eta_\gamma(t))\in X[h_\gamma(t)] \text{ for a.e. $t\in(0,T),$} \quad
 \nabla \overline{\ug}_\gamma \in  L^{2}(\hat{\mathcal{F}}^{-}_{h_\gamma}), 
\]
\item   the kinematic condition
$$
\ug_\gamma(t, x, 1+\eta_\gamma(t, x))=\partial_t\eta_\gamma(t,x) {\bf{e}}_2 \quad \textrm{ on } (0, T) \times (0, L),
$$
\item For any $(\wg_\gamma, d_\gamma)\in \mathcal{C}_\sharp^{\infty}\left(\overline{\CMT}\right)\times\mathcal{C}_\sharp^{\infty}( [0, L]\times [0, T])$ such that $(\wg_\gamma(t), d_\gamma(t))\in\mathcal{X}[h_\gamma(t)]$ for all $t\in [0,T]$ we have for a.e. $t\in(0,T)$
\begin{equation}\label{weak.formulation.FS}
\begin{aligned}
&\rho_{f}\int_{\mathcal{F}_{h_\gamma(t)}}\ug_\gamma(t)\cdot\wg_\gamma(t) -\rho_{f}\int_{0}^{t}\int_{\mathcal{F}_{h_\gamma(s)}}\ug_\gamma\cdot\partial_{t}\wg_\gamma + (\ug_\gamma\cdot\nabla)\wg_\gamma\cdot\ug_\gamma\\
&+\rho_{s}\int_{0}^{L}\partial_{t}\eta_\gamma(t)d_\gamma(t)-\rho_{s}\int_{0}^{t}\int_{0}^{L}\partial_{t}\eta_\gamma\partial_{t}d_\gamma+\mu\int_{0}^{t}\int_{\mathcal{F}_{h_\gamma(s)}}  \nabla\ug_\gamma:\nabla\wg_\gamma\\
&+\beta\int_{0}^{t}\int_{0}^{L}\partial_{x}\eta_\gamma\partial_{x}d_\gamma + \alpha\int_{0}^{t}\int_{0}^{L}\partial_{xx}\eta_\gamma\partial_{xx}d_\gamma+\gamma\int_{0}^{t}\int_{0}^{L}\partial_{xt}\eta_\gamma\partial_{x}d_\gamma \\&
=\rho_{f}\int_{\mathcal{F}_{h_0}}\ug^{0}\cdot\wg_\gamma(0)+\rho_{s}\int_{0}^{L}\eta^{1}d_\gamma(0).
\end{aligned}
\end{equation}
where $\ug_\gamma={\overline{\ug}_\gamma}_{\vert \widehat{\mathcal{F}}_{h_\gamma}}.$
\end{enumerate}
\end{definition}

The regularity statements in the first item of the definition comes from the energy estimate \eqref{energy.estimates} while
the weak formulation \eqref{weak.formulation.FS} is obtained classically by multiplying the fluid equation \eqref{Navier-Stokes}
with $\wg_{\gamma}$ and the beam equation \eqref{Beam.equation} with $d_{\gamma}$ and performing formal integration by parts. As  usual for this type of fluid--structure problem, the test functions  depend on the solution and thus on the parameter $\gamma,$ adding further nonlinearity to the system.

\medskip

We recall that, from \cite{Chambolle-etal}, \cite{Grandmont08},  there exists a weak solution for $\gamma \geq 0$ as long as the beam does not touch the bottom of the fluid cavity. If $\gamma >0$ again and the initial data are smooth enough, it is also  proved in \cite{Grandmont-Hillairet}  that there exists a unique global in time strong solution such that $\min_{x\in [0, L]}h_\gamma(x, t)>0$ for all $t>0$. 
The main result of this paper is stated in the following theorem

\begin{theorem}\label{main.theorem}
Suppose that $T>0$ and that the initial conditions $(\ug^{0},\eta^{0},\eta^{1})$ satisfy \eqref{CI1}-\eqref{CI4}.

Then $(FS)_0$ has a weak solution $(\ug,\eta)$ on $(0,T).$ This solution satisfies furthermore for a.e. $t \in (0,T)$
\begin{equation}\label{energy.estimates.thm}
\begin{aligned}
&\frac{1}{2}\left(\rho_{f}\int_{\mathcal{F}_{h(t)}}\vert\ug(t, \xg) \vert^{2} \dd\xg + \rho_{s}\int_{0}^{L}\vert\partial_{t}\eta(t, x) \vert^{2} \dd x+\beta\int_{0}^{L}\vert\partial_{x}\eta(t, x) \vert^{2} \dd x+\alpha\int_{0}^{L}\vert\partial_{xx}\eta(t, x) \vert^{2} \dd x\right)\\
& + \mu\int_0^t\int_{\mathcal{F}_{h(s)}}\vert \nabla\ug(s, \xg) \vert^{2} \dd\xg\dd s \leq \\
& 
.\qquad \qquad \frac{1}{2}\left(\rho_{f}\int_{\mathcal{F}_{h^0}}\vert\ug^0\vert^{2} \dd \xg+ \rho_{s}\int_{0}^{L}\vert\eta^1\vert^{2}\dd x+\beta\int_{0}^{L}\vert\partial_{x}\eta^0\vert^{2}\dd x+\alpha\int_{0}^{L}\vert\partial_{xx}\eta^0\vert^{2}\dd x\right).
\end{aligned} 
\end{equation}

\end{theorem}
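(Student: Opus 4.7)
The plan is to carry out the vanishing viscosity program announced in the introduction: build a sequence of strong solutions to $(FS)_\gamma$ for $\gamma>0$, exploit the uniform bounds coming from the energy identity \eqref{energy.estimates}, and pass to the limit $\gamma\to 0$. The first step is to regularize the data: starting from $(\ug^0,\eta^0,\eta^1)$ satisfying \eqref{CI1}--\eqref{CI4}, I construct a family $(\ug^0_\gamma,\eta^0_\gamma,\eta^1_\gamma)$ with the compatibility and regularity required by \cite{Grandmont-Hillairet} and such that the right-hand side of \eqref{energy.estimates} converges to its $\gamma=0$ counterpart. This provides, for each $\gamma>0$, a global strong solution $(\ug_\gamma,\eta_\gamma)$ to $(FS)_\gamma$ on $(0,T)$ with $\min_{[0,L]}h_\gamma(\cdot,t)>0$ for every $t$.

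The energy equality then yields, uniformly in $\gamma$, bounds for $\eta_\gamma$ in $L^\infty(0,T;H^2_\sharp)\cap W^{1,\infty}(0,T;L^2_\sharp)$, for $\ug_\gamma$ in $L^\infty(0,T;\BS{L}^2(\mathcal{F}_{h_\gamma}))$ with $\nabla\ug_\gamma\in L^2(\widehat{\mathcal F}_{h_\gamma})$, and for $\sqrt{\gamma}\,\partial_{xt}\eta_\gamma$ in $L^2$. The embedding \eqref{inclusion-eta} gives equicontinuity of $h_\gamma$ in $\mathcal{C}^0([0,T];\mathcal{C}^1_\sharp)$, so up to extraction $h_\gamma\to h$ in that space with $0\leq h\leq M$. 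Extending the fluid velocities via Definition \ref{def:bar} and Lemma \ref{lem:bar}, the sequence $\overline{\ug}_\gamma$ is bounded in $L^\infty(0,T;\BS{L}^2_\sharp(\Omega))$, so I extract a weak-$*$ limit $\overline{\ug}$; the bound on $\sqrt{\gamma}\,\partial_{xt}\eta_\gamma$ ensures that the damping term $\gamma\int_0^t\!\int_0^L \partial_{xt}\eta_\gamma\,\partial_x d_\gamma$ vanishes in the limit.

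The main obstacle is the \emph{strong} compactness of $\overline{\ug}_\gamma$ in $L^2(0,T;\BS{L}^2_\sharp(\Omega))$, which is required to pass to the limit in the convective integral. The natural Aubin--Lions strategy is to control the time translate $\|\overline{\ug}_\gamma(t+\tau)-\overline{\ug}_\gamma(t)\|$ in a negative-order norm by testing \eqref{weak.formulation.FS} against $\overline{\ug}_\gamma(t+\tau)$ itself. The serious difficulty is that the admissible test space $X[h_\gamma(t)]$ varies with $t$ and may degenerate at contact, so one cannot directly plug in a function belonging to $X[h_\gamma(t+\tau)]$. The key technical device must then be a family of projection/transport operators sending elements of $X[h_\gamma(s)]$ into $X[h_\gamma(t)]$ with bounds uniform as long as $h_\gamma$ is uniformly $W^{1,\infty}$, a construction made possible by the stream-function decomposition of Lemma \ref{lemma.X[h]} (which correctly encodes that $\Psi=0$ on $I^c=\{h=0\}$) and the density statement of Lemma \ref{lemma.density}; the $\alpha\partial_x^4\eta$ term in \eqref{Beam.equation} provides the extra $H^{1+\kappa}\cap W^{1,\infty}$ regularity of the deformation on which these uniform estimates rest. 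Combined with Rellich--Kondrachov on the fixed container $\Omega$, this yields $\overline{\ug}_\gamma\to\overline{\ug}$ strongly in $L^2(0,T;\BS{L}^2_\sharp(\Omega))$.

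Once strong compactness of the velocities is secured, the passage to the limit in the weak formulation is routine. Given a limit test $(\wg,d)\in\mathcal{C}^\infty\times\mathcal{C}^\infty$ with $(\wg(t),d(t))\in\mathcal{X}[h(t)]$ for all $t$, the same projection machinery constructs $(\wg_\gamma,d_\gamma)$ with $(\wg_\gamma(t),d_\gamma(t))\in\mathcal{X}[h_\gamma(t)]$ converging sufficiently strongly to $(\wg,d)$. The linear terms and the Dirichlet term then pass trivially to the limit, the convective term by the strong $\BS{L}^2$ convergence of $\overline{\ug}_\gamma$, the kinematic condition through the trace estimate \eqref{est:trace} applied on $\mathcal{F}^-_h$, and the $\gamma$-damping vanishes. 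Finally, the energy inequality \eqref{energy.estimates.thm} is obtained by taking $\liminf_{\gamma\to 0}$ in \eqref{energy.estimates} and using weak lower semicontinuity of the $\BS{L}^2$- and $H^1$-seminorms together with the convergence of the regularized initial energies.
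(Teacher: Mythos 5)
Your scaffolding is correct and matches the paper: regularize the data and invoke \cite{Grandmont-Hillairet} for global strong solutions of $(FS)_\gamma$, use the energy equality \eqref{energy.estimates} for uniform bounds and $\eta_\gamma\to\eta$ in $\mathcal{C}^0([0,T];\mathcal{C}^1_\sharp)$, extend the velocities to $\Omega$, and obtain the energy inequality \eqref{energy.estimates.thm} by weak lower semicontinuity. The treatment of the damping term via the $\sqrt{\gamma}\,\partial_{xt}\eta_\gamma$ bound and the recovery of the kinematic condition are also in line with the paper. One minor deviation: you do not need to construct $\gamma$-dependent test functions $(\wg_\gamma,d_\gamma)$ at the final passage to the limit; since $\mathcal{X}[h(t)]$ is built from fields vanishing in a \emph{neighbourhood} of $\mathcal{C}_{-1}^0$ and with $\wg\cdot\BS{e}_1=0$ in a \emph{neighbourhood} of $\mathcal{S}_{h(t)}$, the uniform convergence $h_\gamma\to h$ makes $(\wg,d)$ admissible in $X[h_\gamma(t)]$ for all small $\gamma$.

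The genuine gap is in the compactness step, which is the heart of the proof. You describe the classical time-translation strategy, testing against $\overline{\ug}_\gamma(t+\tau)$ with ``transport operators sending $X[h_\gamma(s)]$ into $X[h_\gamma(t)]$'' justified by the stream-function Lemma \ref{lemma.X[h]} and the density Lemma \ref{lemma.density}. This does not work as stated: those two lemmas are purely qualitative and give no uniform quantitative control, and more importantly, a transport between $X[h_\gamma(s)]$ and $X[h_\gamma(t)]$ faces the problem that $h_\gamma(s)$ and $h_\gamma(t)$ can cross, whereas the only quantitative continuity result available, Lemma \ref{lem:projector}, is proved under the ordering assumption $\hb\leq h$ (it constructs a change of variables $\chi:\mathcal{F}^-_{\hb}\to\mathcal{F}^-_h$ that only makes sense in that case). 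The paper's mechanism is different and crucial: one constructs a piecewise-constant-in-time lower interface $\hb_\delta(\cdot,t)=[h(t_k)-2\varepsilon]_+$ on a time grid $I_k$ (Lemmas \ref{lemma.halpha}--\ref{lemma.conv.zero} give $\hb_\delta\in H^{1+\kappa}\cap W^{1,\infty}$ and $\|\hb_\delta-h_\gamma\|_{W^{1,\infty}}\leq\delta$ with $\hb_\delta\leq h_\gamma$ for small $\gamma$), projects onto the \emph{fixed} spaces $X^s[\hb_{\delta,k}]$, applies the Fujita--Sauer variant of Aubin--Lions on each $I_k$ (using the variational formulation to bound $\partial_t\mathbb{P}[\hb_{\delta,k}](\rho_\gamma\overline{\ug}_\gamma,\partial_t\eta_\gamma)$ in $L^2(I_k;(X^1[\hb_{\delta,k}])')$), and controls the residual $Err^{app}$ terms through the quantitative estimate \eqref{est.projector.main} of Lemma \ref{lem:projector}, whose $C_A(\delta)\to0$ closes the argument. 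You should identify Lemma \ref{lem:projector} as the key technical ingredient (your proposal never mentions it) and replace the time-translation picture with the fixed-lower-interface projection scheme.
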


Before detailing the proof of this result, we shall comment on the choice of test functions and the relations with a strong formulation of $(FS)_\gamma,$ in particular in the case where contacts occur. Since we focus on the construction of weak solutions, we stick here to a short description of formal arguments. Before contact ({\em i.e.} as long as $\min_{x \in [0,L]} h_\gamma(x,t) \geq \bar{\alpha}$ for some $\bar{\alpha} >0$), we claim that our definition coincides with the definition of \cite{Chambolle-etal} and that the solutions constructed in \cite{Grandmont-Hillairet} for smooth data match our definition also.  In particular in this case we can choose test functions such that $(\wg_\gamma(t), d_\gamma(t))\in  X^1[h_\gamma(t)]$. Before contact,
we recover \eqref{Navier-Stokes}--\eqref{Beam.equation} from the weak formulation using a classical argument. First, we may take as
test functions the  vector fields $\wg_\gamma \in \mathcal{C}^{\infty}_c(\hat \Omega)$ with $d = 0$ and we recover  \eqref{Navier-Stokes} with a zero mean pressure $p$ by adapting an argument of de Rham. Assuming that 
$(\ug_\gamma,p_\gamma)$ is sufficiently smooth -- to be able to define $\sigma(\ug_\gamma,p_\gamma) \BS{n}_{\gamma}$
on $\Gamma_{h_{\gamma}}$ -- we can also recover the beam equation \eqref{Beam.equation} with the following construction that enables to extend structure test functions in the fluid domain

\begin{definition}
\label{def:R} 
Let $\lambda >0$ and $\zeta \in \mathcal{C}^{\infty}(\mathbb R)$ such that 
$\mathbf{1}_{[1,\infty)} \leq \zeta \leq \mathbf{1}_{[1/2,\infty)}. $ 
Given $d\in L_{\sharp,0}^2(0, L)$, we define
$$
\mathcal{R}_\lambda(d)(x,y)= \nabla^{\bot} (b(x) \zeta(y/\lambda))   \quad
 \forall \, (x,y) \in \Omega.
$$
where $b \in H^{1}_{\sharp}(0,L) \cap L^2_{\sharp,0}(0,L)$ satisfies 
$\partial_x b = d.$
\end{definition}

We note that the above construction is well defined since $d$ is chosen to be mean free. We do not include the dependence on  $\zeta$ in the name of our operator since it will be a given  fixed function throughout the paper.  The present lifting operator is a variant of the one introduced in \cite{Chambolle-etal}.  It enjoys the following straightforward properties:

\begin{lemma}
Let $\lambda >0$ and $h\in W^{1, \infty}_\sharp(0, L)$ satisfying 
$\lambda\leq h(x)\leq M, \forall x\in [0, L]$.
\begin{enumerate}
\item  $\mathcal R_{\lambda}$ is a linear continuous mapping from $H^s_{\sharp}(0,L) \cap L^2_{\sharp}(0,L)$ into $K^s[h]$ for arbitrary $s \in [0,1].$
\item $\mathcal R_{\lambda}$ maps $\mathcal{C}^{\infty}_{\sharp}(0,L) \cap L^2_{\sharp,0}(0,L)$
into $\mathcal K[h].$
\end{enumerate}
\end{lemma}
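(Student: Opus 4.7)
The plan is a direct component-wise computation based on the explicit formula for $\mathcal{R}_\lambda(d)$. Using $\partial_x b = d$, one has
\[
(\mathcal{R}_\lambda(d))_1 = -\frac{1}{\lambda}\,b(x)\,\zeta'(y/\lambda), \qquad (\mathcal{R}_\lambda(d))_2 = d(x)\,\zeta(y/\lambda).
\]
The divergence-free property is immediate from the representation $\mathcal{R}_\lambda(d) = \nabla^\perp(b(x)\zeta(y/\lambda))$. The periodicity in $x$ is inherited from that of $b$ (which, chosen with zero mean on $(0,L)$, is well defined and periodic precisely because $d$ has zero mean on $(0,L)$).

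For the support conditions, I would exploit the two properties of $\zeta$: the set $\{t < 1/2\}$ where $\zeta \equiv 0$ (and hence $\zeta' \equiv 0$), and the set $\{t > 1\}$ where $\zeta \equiv 1$ (and hence $\zeta' \equiv 0$). Rescaling by $\lambda$: on $\{y < \lambda/2\}$ we have $\mathcal{R}_\lambda(d) \equiv 0$, and on $\{y > \lambda\}$ we have $(\mathcal{R}_\lambda(d))_1 \equiv 0$. Since $0 < \lambda/2$, the first open set contains $\overline{\mathcal{C}_{-1}^0}$; since $h(x) \geq \lambda$ implies any $(x,y) \in \mathcal{S}_h$ satisfies $y > h(x) \geq \lambda$, the open set $\{y > \lambda\}$ is a neighborhood of $\mathcal{S}_h$. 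This is precisely where the hypothesis $\lambda \leq h$ is used, and it simultaneously settles the inclusion in both $K^s[h]$ (part~1) and $\mathcal{K}[h]$ (part~2, where the neighborhood version of the conditions is required).

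For the Sobolev estimate in part (1), I would argue via the tensor-product structure. Since $b$ is the zero-mean periodic primitive of $d$, a Poincaré-type estimate yields $\|b\|_{H^{s+1}_\sharp(0,L)} \leq C\|d\|_{H^s_\sharp(0,L)}$. The smooth factors $\zeta(y/\lambda)$ and $\lambda^{-1}\zeta'(y/\lambda)$ depend only on $\lambda$ and $\zeta$ (bounded uniformly in $C^k$ for any $k$ in terms of $\lambda$), so that the scalar potential $\Psi_\lambda(x,y) := b(x)\zeta(y/\lambda)$ lies in $H^{s+1}(\Omega)$ with
\[
\|\Psi_\lambda\|_{H^{s+1}(\Omega)} \leq C(\lambda,\zeta,M)\,\|d\|_{H^s_\sharp(0,L)}.
\]
Applying $\nabla^\perp$ lowers the regularity by one order, yielding $\mathcal{R}_\lambda(d) \in H^s(\Omega)$ with the corresponding bound, and continuity is linear in $d$. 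Part (2) then follows immediately: if $d \in \mathcal{C}^\infty_\sharp(0,L) \cap L^2_{\sharp,0}(0,L)$, then $b \in \mathcal{C}^\infty_\sharp(0,L)$, hence $\Psi_\lambda \in \mathcal{C}^\infty_\sharp(\overline{\Omega})$, and $\mathcal{R}_\lambda(d)$ is smooth with the neighborhood conditions already verified above.

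There is no real obstacle; the only point that requires a bit of attention is the interplay between the neighborhood definition of $\mathcal{K}[h]$ and the possible equality $h(x)=\lambda$: here one must observe that $\mathcal{S}_h$ is strictly contained in the open set $\{y > \lambda\}$ (thanks to the strict inequality $y > h(x)$ in the definition of $\mathcal{S}_h$), so that this open set is genuinely an open neighborhood, which is exactly what makes the lifting admissible for use as a structure test function.
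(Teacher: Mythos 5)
The paper states these properties as ``straightforward'' and includes no proof, so there is nothing to compare your argument against; your direct computation from the explicit components of $\nabla^{\perp}(b(x)\zeta(y/\lambda))$ is exactly the verification the authors leave to the reader, and it is correct. Two small remarks. First, in item (1) the hypothesis should read $L^2_{\sharp,0}(0,L)$ (zero mean) rather than $L^2_{\sharp}(0,L)$ -- your proof silently uses the zero--mean condition to build the periodic primitive $b$ and to get the Poincar\'e bound $\|b\|_{H^{s+1}_\sharp}\leq C\|d\|_{H^s_\sharp}$, which is the correct reading. Second, your parenthetical remark about the edge case is slightly imprecise: if $h(x_0)=\lambda$ for some $x_0$, then $(x_0,\lambda)\in\overline{\mathcal S_h}$ but $(x_0,\lambda)\notin\{y>\lambda\}$, so $\{y>\lambda\}$ contains $\mathcal S_h$ but not its closure, and the condition ``$\wg\cdot\BS{e}_1=0$ in $\mathcal V(\mathcal S_h)$'' is only guaranteed in the sense intended by the paper (an open set extending slightly below the graph) when $\lambda<\min_x h(x)$. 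This is irrelevant for item (1), where only vanishing in $\mathcal S_h$ itself is required, and in the paper's sole use of item (2) the strict inequality holds; it is worth noting only because it is precisely the neighbourhood property that lets these liftings remain admissible as test functions when $h$ is perturbed to $h_\gamma$.
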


Consequently, before contact, for any arbitrary structure test function $d_{\gamma} \in \mathcal{C}^{\infty}_{\sharp}(0,L) \cap L^2_{\sharp,0}(0,L)$ we may consider
\[
\wg_{\gamma} :=  \mathcal R_{{\lambda}}[d_{\gamma}] \in \mathcal{C}^{\infty}_{\sharp}(\overline{\hat{\Omega}}),
\] 
so that $(\wg_{\gamma},d_{\gamma})$ is an admissible test function in our weak formulation. Classical integration by parts argument then enables to recover the structure equation \eqref{Beam.equation} multiplied by $d_{\gamma}.$ We note that, in this way, we recover \eqref{Beam.equation} up to a constant (indeed the test function $d_{\gamma}$ is mean free), but this constant  mode corresponds to the choice of the constant normalizing the pressure in order to match the global volume preserving constraint 
\eqref{compatibilite-vitesse-structure}.

\medskip

When contact occurs, we recover a similar set of equations, assuming, once again, that the solution is sufficiently regular. Let consider for instance a simplified configuration such that, on some time interval $(T_0,T_1)$ there exist $\mathcal C^1$--functions $(a_k^{-},a_k^+): (T_0,T_1) \to \mathbb R^2$ ($k\in \mathbb N$) such that 
\[
\{(t,x) \in (T_0,T_1 )\times(0,L) \ | \ h(t,x) >0 \} = \bigcup_{k\in\mathbb N}  \bigcup_{t\in (T_0,T_1)}  \{t\} \times  (a_k^{-}(t),a_{k}^+(t))
\]
In that case, we can reproduce similar arguments developed in the no contact case to recover the Navier--Stokes equations
and the structure equations in each connected component of the fluid domain.
More precisely, let introduce
\begin{align*}
& \hat{\mathcal F}_k := \{(t,x,y) \in (T_0,T_1) \times (0,L) \times (0,M) \text{ s.t. } a_k^{-}(t) < x < a_k^{+}(t) \quad  0 < y  < h_{\gamma}(t,x) \}\,,
\\
& \hat{\Gamma}_k :=   \{(t,x) \in (T_0,T_1) \times (0,L)  \text{ s.t. } a_k^{-}(t) < x < a_k^{+}(t)   \}\,.
\end{align*}
First, by using that $\ug_{\gamma}$ is divergence free on $\hat{\mathcal F}_{k}$  we obtain 
\[
\int_{a_{k}^{-}(t)}^{a_k^{+}(t)} \partial_t \eta_{\gamma} = 0.
\]
Second, by taking as a fluid test function a velocity field $\wg_{\gamma}$ with compact support in $\hat{\mathcal F}_k$, 
we construct a pressure $p_{k,\gamma}$ on $\hat{\mathcal F_{k}}$  so that \eqref{Navier-Stokes} holds true.
We recall that, at this point, $p_{k,\gamma}$ is defined up to a constant. The global
pressure $p_{\gamma}$ is then constructed by concatenating all the $(p_{k,\gamma})_{k}$ to yield a pressure on $\hat{\mathcal F}_{h_{\gamma}}$
(that is defined up to a number of constants related to the number of parameters $k$).
Third, we consider a mean free structure test function $d_{\gamma} \in \mathcal{C}^{\infty}_c(\hat{\Gamma}_{k}).$ Since $d_{\gamma}$
has compact support in the open set where $h_{\gamma} >0 $,  $h_{\gamma}$ is bounded from below by some $\alpha_k>0$ on $\hat{\Gamma}_k$. So, instead of choosing the mean free anti-derivative $b_{\gamma}$ of the structure test function $d_{\gamma}$ in the definition of $\mathcal R_{{\alpha}_k}$, we choose the one that vanishes outside the support of $d_{\gamma}.$
In that way, we construct a test function $\wg_{\gamma}$ such that $(\wg_{\gamma},d_{\gamma})$ is  adapted to our weak formulation. So, we 
obtain \eqref{Beam.equation} on $\hat{\Gamma}_{k}$ up to a constant which is afterwards fixed by a suitable choice of the pressure
on $\hat{\mathcal F}_{k}$. {Note that the structure equation is recovered on each component  $\hat{\Gamma}_{k}$ and not on the whole interval $(0, L)$} and that the pressure is again uniquely defined but that there are more constants to fix than in the no contact case.
To end up this remark, we emphasize that when $\min_{x\in [0, L]}h_\gamma(x, t)>0, \forall t\in [0, T]$, the test functions can be chosen in $X[h_\gamma(t)]$. Moreover if $\min_{x\in [0, L]}h_\gamma(x, t)>0$ the elastic test functions can be chosen independent of the solution and thus independent of the regularization parameter $\gamma$ (see \cite{Chambolle-etal}). It is not the case when a contact occurs  since, as we saw in the previous construction, we require  $d_\gamma=0$ in a neighbourhood of the contact points.

\medskip

We end this part by giving a roadmap of our proof of Theorem \ref{main.theorem}.
To obtain a solution of the  variational problem  for $\gamma=0$ we consider the approximate fluid--structure system $(FS)_{\gamma}$ with a viscosity $\gamma>0$. From \cite[Theorem 1]{Grandmont-Hillairet-Lequeurre} this fluid--structure system $(FS)_{\gamma}$, completed with regularized  initial conditions $(\ug^{0}_{\gamma},\eta^{0}_{\gamma},\eta^{1}_{\gamma})$, admits a unique strong solution $(\ug_{\gamma},p_{\gamma},\eta_{\gamma})$ such that $\min_{x\in [0, L]}h_\gamma(x, t)>0$. It ensures that the existence time interval does not depend on $\gamma$. Moreover,  this solution satisfies the energy equality \eqref{energy.estimates}, and thus one can extract converging subsequences. 
We may then consider one cluster point of this sequence and show that this is a weak solution to $(FS)_0$.
 One key point here is that strong compactness of the approximate velocity fields is needed to pass to the limit in the convective nonlinear terms. The classical Aubin--Lions lemma does not apply directly because of the time-dependency of fluid domains and of the divergence free constraint.  Many different strategies may be used to handle this difficulty \cite{Fujita-Sauer, Grandmont08, Lengeler, Moussa}.  Here we follow the line of \cite{SanMartin-Starovoitov-Tucsnak} where the existence of weak solutions for a fluid--solid problem beyond contact is proven. We first obtain compactness of a projection of the fluid and structure velocities. Roughly speaking the idea is to obtain compactness on fixed domains  independent  of time and of $\gamma$. So, we  define an interface satisfying $0\leq\underline{h}\leq h_\gamma$, which is regular enough and ``close" to $h_\gamma$ for all $\gamma$ small enough 
 and we prove compactness of the projections of the velocity fields on the coupled space associated to $\underline{h}.$ We recover the 
 compactness of the velocity fields by proving some continuity properties of the projection operators with respect to $\underline{h}.$ {In particular we prove that $X^{s}[\underline{h}]$ is a good approximation space of $X^{s}[h]$ in $H^s$ for some $s\geq 0$ whenever $\underline{h}$ is close to $h$.}
 This part of the proof is purely related to the definition of the spaces $X[h]$. Consequently we detail the arguments as preliminaries in the next subsection. We emphasize again that, since one may loose the no contact property at the limit, this study on the compactness of approximate velocity fields requires specific constructions. Once compactness is obtained, we pass finally to the limit in the weak formulation.  Again, as  contact may occur in the limit problem, we cannot follow \cite{Chambolle-etal} to construct a dense family of test functions independent of $\gamma$ to pass to the limit.

\subsection{On the $h$-dependencies of the spaces $X^s[h]$}
\label{continuite_h}
In this section, we analyze the continuity properties of the sets $X^s[h]$ with respect to the parameter $h.$
To start with, we remark that, given $h \in \mathcal{C}^0_{\sharp}(0,L)$ satisfying $0 \leq h \leq M$ the space
$X^s[h]$ is a closed subspace of 
\[
X^s  :=  \BS{H}^s_{\sharp}(\Omega) \times H^{2s}_{\sharp}(0,L) .
\] 
We can then construct the projector $\mathbb P^s[h] : X^s \to X^s[h].$ We analyze in this section the continuity
properties of these projectors with respect to the function $h.$ Our main result is the following lemma:
 
\begin{lemma} \label{lem:projector}
Fix $0<\kappa<\frac{1}{2}$. Let $h$ and $\hb$ belong to $H_\sharp^{1+\kappa}(0,L)\cap W_\sharp^{1,\infty}(0,L)$ with $0\leq \hb\leq h\leq M$ and set
\begin{equation}
\label{borne:A}\norme{h}_{H^{1+\kappa}_{\sharp}(0,L)}+\norme{h}_{W^{1,\infty}_{\sharp}(0,L)}+\norme{\hb}_{H^{1+\kappa}_{\sharp}(0,L)}+\norme{\hb}_{W^{1,\infty}_{\sharp}(0,L)}\leq A.
\end{equation}
Let $s \in [0,\kappa/2)$ and $(\wg,\etap) \in X^s[h]$ enjoying the further property
\begin{eqnarray}
&\wg_{\vert\mathcal{F}_{h}^{-}}\in \BS{H}^{1}_{\sharp}(\mathcal{F}^{-}_{h}).\label{hyp-Ps2}
\end{eqnarray}
Then, the following estimate holds true:
\begin{equation}\label{est.projector.main}
\norme{\mathbb{P}^{s}[\hb](\wg,\etap) -(\wg,\etap)}_{X^s}\leq C_{A}(\norme{\hb -h }_{W^{1,\infty}_{\sharp}(0,L)})\norme{\wg}_{\BS{H}^{1}_{\sharp}(\mathcal{F}^{-}_{h})},
\end{equation}
where $C_{A}(x)\underset{x\rightarrow 0}{\longrightarrow}0$.
\end{lemma}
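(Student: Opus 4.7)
My strategy is to exhibit an explicit competitor $(\wg^*,d^*)\in X^s[\hb]$ and then invoke the variational characterization of $\mathbb{P}^s[\hb]$ as the best--approximation operator: since
\[
\norme{\mathbb{P}^s[\hb](\wg,\etap)-(\wg,\etap)}_{X^s}\le\norme{(\wg^*,d^*)-(\wg,\etap)}_{X^s},
\]
it suffices to produce one competitor for which the right--hand side carries the advertised rate $C_A(\norme{\hb-h}_{W^{1,\infty}})\norme{\wg}_{\BS{H}^1(\mathcal F_h^-)}$.

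The construction is performed through the stream function of Lemma \ref{lemma.X[h]}: write $\overline{\wg}=\nabla^\perp\Psi$ with $\Psi\in H^1_\sharp(\CM)$, $\Psi=0$ in $\mathcal{C}_{-1}^0$ and $\Psi(\cdot,y)=b(\cdot)$ in $\mathcal{S}_h$ with $\partial_x b=\etap$. The extra hypothesis $\wg\in\BS{H}^1(\mathcal F_h^-)$ lifts to $\Psi\in H^2(\mathcal F_h^-)$. I then truncate $\Psi$ along the lower graph by setting
\[
\Psi^*(x,y):=\Psi(x,\min(y,\hb(x))),\qquad \tilde b(x):=\Psi(x,\hb(x)),\qquad \wg^*:=\nabla^\perp\Psi^*,\quad d^*:=\partial_x\tilde b.
\]
Then $\wg^*$ coincides with $\wg$ in $\mathcal F^-_{\hb}$ and reduces to $(0,d^*(x))$ in $\mathcal S_{\hb}$, so $\wg^*\cdot\BS{e}_1=0$ there. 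The chain rule gives $d^*=w_2(\cdot,\hb)-w_1(\cdot,\hb)\hb'$, which is the normal trace of $\wg$ on the graph of $\hb$; this ensures that $\wg^*$ is globally divergence free, that $w_2^*|_{y=M}=d^*$, and $\int_0^L d^*=\tilde b(L)-\tilde b(0)=0$ by $L$-periodicity. Piecewise regularity ($\Psi\in H^2$ below $\hb$ and $\tilde b\in H^{3/2}(0,L)$ by trace on the $W^{1,\infty}$ graph of $\hb$), together with the fact that for $s<1/2$ only the normal trace needs to match (Lemma \ref{lem:bar}), yields $(\wg^*,d^*)\in X^{s_0}[\hb]\subset X^s[\hb]$ for any fixed $s_0\in(s,\kappa/2)$, with a uniform bound $\norme{(\wg^*,d^*)}_{X^{s_0}}\le C_A\,\norme{\wg}_{\BS H^1(\mathcal F_h^-)}$.

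Next, I quantify the distance $(\wg^*,d^*)-(\wg,\etap)$ in $\BS{L}^2\times L^2$. The difference is supported in $\mathcal S_{\hb}$, and using the trace identity $\etap=w_2(\cdot,h)-w_1(\cdot,h)h'$ one rewrites
\[
\etap-d^*=(w_2(\cdot,h)-w_2(\cdot,\hb))-(w_1(\cdot,h)-w_1(\cdot,\hb))\hb'-w_1(\cdot,h)(h'-\hb').
\]
Each term is bounded via the fundamental theorem of calculus $w_i(\cdot,h)-w_i(\cdot,\hb)=\int_{\hb}^h\partial_y w_i\,\dd y$, the trace inequality \eqref{est:trace}, and the assumption \eqref{borne:A}; a parallel computation handles $\wg-\wg^*$ in $\mathcal{C}_{\hb}^h$ and in $\mathcal S_h$. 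This produces
\[
\norme{(\wg^*,d^*)-(\wg,\etap)}_{\BS L^2\times L^2}\le C_A\,\norme{\hb-h}_{W^{1,\infty}_\sharp}^{1/2}\norme{\wg}_{\BS H^1(\mathcal F_h^-)}.
\]
Interpolating between this vanishing $L^2\times L^2$ bound and the uniform $X^{s_0}$ bound at the exponent $s/s_0$ (using the standard Sobolev interpolation $X^s=[X^0,X^{s_0}]_{s/s_0}$) closes the required estimate with $C_A(x)=C'_A\,x^{(1-s/s_0)/2}\to 0$ as $x\to 0$.

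The main obstacle I expect is the management of boundary regularity: since $s<1/2$, one does \emph{not} have $w_1(\cdot,h)=0$ from the fluid side (only normal--trace continuity is imposed), and this is precisely why the $L^2$--vanishing must be driven by $\norme{\hb-h}_{W^{1,\infty}_\sharp}$ rather than by $\norme{\hb-h}_{\infty}$ alone, and why the trace $\tilde b=\Psi(\cdot,\hb(\cdot))$ has to be carefully handled on the merely $W^{1,\infty}$ graph of $\hb$. The restriction $s<\kappa/2$ appears exactly to leave room for an auxiliary exponent $s_0\in(s,\kappa/2)$ in which both the piecewise $H^{s_0}$--regularity of $\wg^*$ above $\hb$ and the interpolation between $L^2$ and $X^{s_0}$ are simultaneously admissible.
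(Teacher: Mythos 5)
Your proof is correct in its overall logic and takes a genuinely different route from the paper. The paper also starts from the minimality of $\mathbb P^s[\hb]$ and builds an explicit competitor, but it does so via a vertical change of variables $\chi(x,y)=(x,m(x)(y+1)-1)$ with $m=(h+1)/(\hb+1)$, pulling $\wg$ back through the cofactor transform $\wg^\chi=\text{Cof}(\nabla\chi)^\top\,\tilde\wg\circ\chi$ and then correcting by the trace at $y=0$; the difference $\wg-\wg^\chi$ is then estimated region by region ($\mathcal S_h$, $\mathcal C_{\hb}^h$, $\mathcal F_{\hb}$, $\mathcal C_{-1}^0$), each with its own interpolation between an $L^2$ bound that vanishes in $\norme{h-\hb}_{W^{1,\infty}}$ and a uniform higher--order bound. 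Your competitor, built by truncating the stream function at the lower graph, $\Psi^*(x,y)=\Psi(x,\min(y,\hb(x)))$, avoids the cofactor machinery altogether: it is manifestly divergence free, automatically vanishes in $\mathcal C_{-1}^0$, reduces to $(0,d^*)$ in $\mathcal S_{\hb}$, and supports a single global interpolation between a vanishing $X^0$ bound and a uniform $X^{s_0}$ bound for $s<s_0<\kappa/2$. This is arguably more economical; what the paper's transformation buys in exchange is an explicit formula for $\wg^\chi$ that tracks the dependence on $h,\hb$ term by term, which makes the role of the $H^{1+\kappa}$ hypothesis (the cofactor multiplier estimate \eqref{preli.Hs}) entirely transparent. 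The two schemes lean on exactly the same structural facts: the normal--trace identity $\etap=w_2(\cdot,h)-w_1(\cdot,h)h'$ (valid thanks to the $H^1$ hypothesis on $\wg|_{\mathcal F^-_h}$), the thin--strip $L^2$ estimate via the fundamental theorem of calculus, and the observation that for $s<1/2$ piecewise $H^s$ fields glue without a tangential--trace matching requirement.

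Two points to tighten. First, $\tilde b\in H^{3/2}(0,L)$ is too optimistic: tracing $\Psi\in H^2(\mathcal F^-_h)$ on the merely $W^{1,\infty}\cap H^{1+\kappa}$ graph of $\hb$ gives $\tilde b\in H^1$, and the multiplier product $w_1(\cdot,\hb)\hb'$ with $\hb'\in H^\kappa\cap L^\infty$ only upgrades $d^*=\tilde b'$ to $H^\kappa(0,L)$, hence $\tilde b\in H^{1+\kappa}$; since $\kappa<\tfrac12$ this is strictly below $H^{3/2}$. The error is harmless because your restriction $s_0<\kappa/2$ is already dictated by the requirement $d^*\in H^{2s_0}_\sharp(0,L)$, but the claim should be corrected. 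Second, in establishing $\Psi^*\in H^1_\sharp(\CM)$ you should note explicitly that the $H^1$ hypothesis on $\wg|_{\mathcal F^-_h}$ is what makes the $H^{1/2}\subset L^2$ trace of $\nabla\Psi$ on $\Gamma_{\hb}$ available, and that continuity of $\Psi^*$ across $\Gamma_{\hb}$ rules out a singular jump term in the weak gradient; this is where the hypothesis \eqref{hyp-Ps2} is really used (just as the paper needs it to legitimize its trace ${w^\chi_2}_{|_{y=0}}$). With these adjustments your argument stands as a valid and shorter alternative to the paper's change--of--variables construction.
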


\begin{proof}
The idea is to construct $(\vg,b)\in X^{s}[\hb]$ such that 
\begin{equation}\label{est.projector}
\norme{(\vg,d) -(\wg,\etap)}_{\BS{H}^{s}_{\sharp}(\CM)\times H^{2s}_{\sharp}(0,L)}\leq C_{A}(\norme{\hb -h }_{W^{1,\infty}_{\sharp}(0,L)})\norme{\wg}_{\BS{H}^{1}_{\sharp}(\mathcal{F}^{-}_{h})}.
\end{equation}
The inequality \eqref{est.projector.main} then follows from the minimality property of the projection.
The proof is divided in five  steps. The two first ones are devoted to the construction of $(\vg,d).$ The latter ones
concern the derivation of \eqref{est.projector}.

\medskip

\noindent{\textbf{Step 1. Geometrical preliminaries.}} 
Before going to the construction of a candidate $(\vg,d)$ we define and analyze a change of variables 
$\chi$ that maps $\mathcal{F}^-_{\hb}$ on $\mathcal{F}^-_{h}$. For $(x,y) \in \Omega,$ we set
\[
\chi(x,y) = \left(x,m(x)(y+1)-1\right), \; \text{ where }  m(x) = \displaystyle\frac{h(x)+1}{\hb(x)+1}
\]
Clearly, $\chi$ realizes a one-to-one mapping between $\mathcal{F}^-_{\hb}$ and $\mathcal{F}^-_{h}$.
 Thanks to the regularity assumptions on $h$ and $\hb$, we remark that $m\in W_\sharp^{1,\infty}(0, L)\cap H_\sharp^{1+\kappa}(0, L)$ -- since both spaces are algebras -- and that $\chi \in  \BS{W}_\sharp^{1, \infty}(\Omega).$ 
 
\medskip
 
For the definition of $\vg,$ we shall transform $\wg$ into a vector field $\wg^{\chi}$ satisfying $\wg^{\chi} \cdot \BS{e}_1 = 0$ on $\mathcal S_{\hb}.$ To preserve simultaneously that $\wg^{\chi}$ is divergence free, one multiplies, in a standard way, the vector field by the cofactor of  $\nabla \chi$. So, we now analyze  the multiplier properties of $\text{Cof}(\nabla\chi)^{\top}.$ First, we have 
 \[
 \text{Cof}(\nabla\chi)^{\top} = \begin{pmatrix}
m(x)&0\\
-\partial_x m(x)(y+1)&1\\
\end{pmatrix} \in  \BS{L}^\infty_\sharp(\Omega)\cap  \BS{H}_\sharp^\kappa(\Omega).
 \]
Then, straightforward computations yield
\begin{equation} \label{prelim.h}
\|m-1\|_{W^{1,\infty}_{\sharp}(0,L)} \leq C_A \|h-\underline{h}\|_{W^{1,\infty}_{\sharp}(0,L)} ,
\end{equation}
so that, 
\begin{equation}\label{preli.C1}\norme{\text{Cof}(\nabla\chi)^{\top}-\mathbb I_2}_{\BS{L}^{\infty}_{\sharp}(\CM)}\leq C_A\norme{h-\hb}_{W^{1,\infty}_{\sharp}(0,L)},
\end{equation}
with $C_A$ a constant depending only on the upper $A$ defined by \eqref{borne:A}.
Finally we prove $H^{\sigma}$--estimates. 
To this end, we interpolate between $L^2$ and $H^\kappa$. Estimate \eqref{preli.C1} implies
\[
\norme{\text{Cof}(\nabla\chi)^{T}-\mathbb I_2}_{\BS{L}^{2}_{\sharp}(\CM)}\leq C_A\norme{h-\hb}_{W^{1,\infty}_{\sharp}(0,L)}.
\]
Then, we remark that
\[
\norme{m-1}_{H^{1+\kappa}_{\sharp}(0,L)}\leq C_{A},
\]
and that, for any given $f$, $a$ and $b$ regular functions defined on $(0,L),$ there holds
\begin{equation}\label{2D.1D.estimate}\norme{f}_{H_\sharp^{\sigma}(\mathcal{C}_{a}^{b})}\leq \norme{a-b}_{L_\sharp^{\infty}(0,L)}^{\frac{1}{2}}\norme{f}_{H_\sharp^{\sigma}(0,L)},
\end{equation}
for $0\leq \sigma\leq 1$.   Consequently, we obtain also
\[\norme{\text{Cof}(\nabla\chi)^{\top}-\mathbb I_2}_{\BS{H}^{\kappa}_{\sharp}(\CM)}\leq C_A\norme{m-1}_{H^{1+\kappa}_{\sharp}(0,L)}.
\]
Using interpolation between the $L^{2}$ and the $H^{\kappa}$ estimates finally leads to
\begin{equation}\label{preli.Hs}\norme{\text{Cof}(\nabla\chi)^{\top}-\mathbb I_2}_{\BS{H}^{\sigma}_{\sharp}(\CM)}\leq C_A \norme{h-\hb}_{W^{1,\infty}_{\sharp}(0,L)}^{\frac{\kappa-\sigma}{\kappa}},
\end{equation}
for $0\leq \sigma\leq \kappa$.

\medskip

\noindent\textbf{{Step 2. Construction of $(\vg,d)$.}}  Let consider $(\wg,\etap) \in X^s[h]$ enjoying the further property
\begin{eqnarray}
&\wg_{\vert\mathcal{F}_{h}^{-}}\in \BS{H}^{1}_{\sharp}(\mathcal{F}^{-}_{h}).\label{hyp-Ps2-proof}
\end{eqnarray}
As mentioned previously, to define $(\vg,d)$ we first construct an intermediate vector field 
$\wg^{\chi}$ obtained by the change of variables $\chi$ from $\wg.$ However we note  that 
$\chi$ does not map $\Omega$ into $\Omega$ so that we must at first extend $\wg$ for
$y\geq 2 M.$ Namely, we set
\[
\tilde{\wg}(x,y) = 
\left\{
\begin{array}{ll}
\wg(x,y), & \text{ if $(x,y) \in \Omega$}, \\
\etap(x)\BS{e}_{2}, & \text{ if $y >2M$}.
\end{array}
\right. 
\] 
This extension preserves the divergence free constraint.
We next define
\[\wg^\chi = \text{Cof}(\nabla \chi)^{\top}\tilde{\wg}\circ \chi.\] 
The $\text{Cof}(\nabla \chi)^{\top}$ factor ensures that  $\wg^\chi$ is also divergence free.
Next we define $(\vg,d)$ as
\begin{equation}\label{def.v.b}
\vg=\begin{cases}
\begin{array}{ll}
\wg^\chi -{w_{2}^\chi}_{|_{y=0}}\BS{e}_{2}, & \text{ in } \mathcal{C}^{2M}_{0},\\
0, & \text{ in }\mathcal{C}^{0}_{-1},
\end{array}
\end{cases}
\qquad
d=\etap -{w_{2}^\chi}_{|_{y=0}}.
\end{equation}
The first step is to verify that  $(\vg, d) \in X^s[\hb]$, $\forall s<\kappa/2$.  First,  by taking into account  assumption \eqref{hyp-Ps2-proof}, since $\tilde{\wg}$ coincides with $\wg$ in $\mathcal F_{h}^{-}$, we have that $\tilde{\wg}\in H_\sharp^1(\mathcal F_{h}^{-})$. Thus since the change of variables  $\chi$ maps $\mathcal{F}^-_{\hb}$ onto $\mathcal{F}^-_{h},$ the above analysis of the  regularity of $\chi$ and of $ \text{Cof}(\nabla \chi)^{\top}$ implies that $\wg^\chi\in \BS{H}^s_\sharp(\mathcal{F}^-_{\hb})$, $\forall s<\kappa$ (see \cite[Proposition B.1]{Grubsolo} ).  Moreover by the change of variables, the boundary $y=0$ is mapped to $y= m-1 $ which is lower than $h$ and strictly greater that $-1$. Hence, the trace of  $\tilde{\wg} \circ \chi$ on $y=0$ is well defined and belongs to $ H^{1/2}_{\sharp}(0,L)$. 
But by definition we have
\[
{w_{2}^\chi}_{|_{y=0}} = -\partial_x m \  \tilde{w}_{1}\circ\chi_{|_{y=0}}+\tilde{w}_{2}\circ\chi_{|_{y=0}},
\]
where $\partial_x m\in H^\kappa_\sharp(0, L).$ Classical multiplier arguments  thus  imply that ${w_{2}^\chi}_{|_{y=0}} \in H^{2s}_{\sharp}(0,L)$  for any  $s < \kappa/2$  and that 
\begin{equation} \label{eq.lapremieresurw2chi}
\|{w_{2}^\chi}_{|_{y=0}}\|_{H^{2s}_{\sharp}(0,L)}  \leq C_A(\|h-\hb\|_{W^{1,\infty}_{\sharp}(0,L)}) \|\wg\|_{H^1(\mathcal F^{-}_h)},
\end{equation}
where $C_{A}(x)\underset{x\rightarrow 0}{\longrightarrow}0$.
Furthermore we have by construction that 
\[
\text{
\qquad $\wg^\chi= \etap \BS{e}_{2}$ in  $\mathcal {S}_\hb$.
}
\]
Consequently, thanks to the regularity of $\etap$ and the one obtained on  ${{w}^\chi_{2}}_{|_{y=0}}$, we deduce that $\vg\in \BS{H}^{2s}(\mathcal {S}_\hb)\subset \BS{H}^{s}(\mathcal {S}_\hb)$.
Finally $(\vg, d)\in \BS{H}^s(\Omega)\times H^{2s}(0, L)$, for $s<\kappa/2$.
Let us now check the divergence free constraint and the fluid--structure velocity matching.
We have by construction that 
\[
\text{
$\text{div}\,\wg^\chi =0$ in $\Omega$.}
\]
Thus $\vg$ satisfies
\[
\text{
$\text{div}\,\vg =0$ in $\mathcal{C}^{2M}_{0}$  and $\text{div}\,\vg =0$ in $\mathcal{C}^{-1}_{0}$.
 }
\]
Since, by construction $\vg_{|_{y=0}}=0$, we obtain $\text{div}\,\vg =0$ in $\Omega$.
Moreover ${\rm div}\, \wg^{\chi} = 0$ on $\mathcal C_{-1}^0$ with $\wg^{\chi} = 0$ on $y=-1.$ 
By integrating this divergence constraint we obtain the condition
\[
\int_{0}^L {w_{2}^\chi}_{|_{y=0}} = 0.
\]
As a consequence, since $\dot\eta\in L^2_{\sharp,0}(0,L)$, we obtain   $d \in L^2_{\sharp,0}(0,L).$ 

We now check  the remaining compatibility conditions of $X^s[h].$ For  $y \geq \hb,$ we have $\vg = ( \dot{\eta} - {w_{2}^\chi}_{|_{y=0}}) \BS{e}_2$ so that $\vg$ satisfies
\[
\vg \cdot \BS{e}_1 = 0\,, \text{ on $\mathcal S_{\hb}$ }, \qquad  {v_2}_{|_{y=M}} = d. 
\]
This ends the proof that $(\vg,d) \in X^s[\hb].$

%
%
%
%

\medskip

\noindent{\textbf{Step 3. Splitting of  $\|(\wg,\etap) - (\vg,d)\|_{X^s}$.}} 
Let first remark that
\[\vg - \wg^\chi =\begin{cases}
\begin{array}{ll}
-{w_{2}^\chi}_{|_{y=0}}\BS{e}_{2},&\text{ in  } \mathcal{C}^{2M}_{0},\\
-\wg^\chi, & \text{ in   }\mathcal{C}^{0}_{-1},
\end{array}
\end{cases}
\qquad
\etap -d ={w_{2}^\chi}_{|_{y=0}}.
\]
Consequently we have
\[
\begin{aligned}
\norme{(\wg,\etap) - (\vg,d)}_{X^s}&\leq \norme{(\wg,\etap) - (\wg^\chi,d)}_{X^s} +\norme{\wg^\chi - \vg}_{\BS{H}^{s}_{\sharp}(\CM)}\\
&\leq \norme{\wg -\wg^\chi}_{\BS{H}^{s}_{\sharp}(\CM)} + \norme{{w_{2}^\chi}_{|_{y=0}}}_{H^{2s}_{\sharp}(0,L)}+ \norme{{w_{2}^\chi}_{|_{y=0}}\BS{e}_{2}}_{\BS{H}^{s}_{\sharp}(\mathcal{C}^{2M}_{0})}+\norme{\wg^\chi}_{\BS{H}^{s}_{\sharp}(\mathcal{C}_{-1}^{0})}.
\end{aligned}
\]
Recalling \eqref{2D.1D.estimate} we obtain the bound
\[\norme{{w_{2}^\chi}_{|_{y=0}}\BS{e}_{2}}_{\BS{H}_\sharp^{s}(\mathcal{C}_{0}^{2M})}\leq \sqrt{2M}\norme{{w_{2}^\chi}_{|_{y=0}}\BS{e}_{2}}_{\BS{H}_\sharp^{s}(0,L)}\leq \sqrt{2M}\norme{{w_{2}^\chi}_{|_{y=0}}}_{H_\sharp^{2s}(0,L)}.\]
Moreover, as $\wg=0$ in $\mathcal{C}^{0}_{-1},$ we remark that an estimate on $\norme{\wg^\chi - \wg}_{\BS{H}^{s}(\CM)}$ implies an estimate on $\norme{\wg^\chi}_{\BS{H}^{s}(\mathcal{C}_{-1}^{0})}$. Finally \eqref{est.projector.main} is implied by the following estimate
\begin{equation}\label{est.projector.final}
\norme{\wg -\wg^\chi}_{\BS{H}^{s}_{\sharp}(\CM)}+ \norme{{w_{2}^\chi}_{|_{y=0}}}_{H^{2s}_{\sharp}(0,L)}
\leq C_{A}(\norme{\hb -h }_{W^{1,\infty}_{\sharp}(0,L)})\norme{\wg}_{\BS{H}^{1}_{\sharp}(\mathcal{F}^{-}_{h})}.
\end{equation}
Thus we have to prove that $\wg -\wg^\chi=\wg-\text{Cof}(\nabla \chi)^{\top}\wg\circ \chi $ and  ${w_{2}^\chi}_{|_{y=0}}=-\partial_x m \  w_{1}\circ\chi_{|_{y=0}}+w_{2}\circ\chi_{|_{y=0}}$ can be estimated with respect to the difference $h-\hb$.  This is the  aim of the two next steps respectively.

\medskip

\textbf{Step 4. {Estimating $\wg -\wg^\chi.$}} 
We estimate the difference $\wg - \wg^{\chi}$ 
by considering successively each of the subdomains of $\Omega$: $\mathcal{S}_{h}$, $\mathcal{C}_{\hb}^{h}$,  $\mathcal{F}_{\hb}$, $\mathcal{C}_{-1}^{0}.$ 

\noindent\textit{Estimates in $\mathcal{S}_{h}$.} In $\mathcal{S}_{h}$,  $\wg=\etap \BS{e}_2.$ By replacing in the definition of $\wg^{\chi}$, we have also $\wg^\chi = \etap\BS{e}_2$ in $\mathcal{S}_{\hb}$ and since $h\geq \hb$ we infer $\wg - \wg^{\chi} = 0$ in $\mathcal{S}_{h}$. \\

\noindent{\textit{Estimates in $\mathcal{C}^{h}_{\hb}$.}} The identity $\wg^\chi=\etap\BS{e}_{2}$ still  holds in $\mathcal{C}^{h}_{\hb} \subset \mathcal S_{\hb}$ which leads to
\[\wg(x,y)- \wg^{\chi}(x,y) = \wg(x,y) - \wg(x,h(x)) =\int_{y}^{h(x)}\partial_y\wg(x,z)\dd z.\]
We obtain then
\[
\begin{aligned}
\norme{\wg^\chi - \wg}_{\BS{L}^{2}_{\sharp}(\mathcal{C}_{\hb}^{h})}^{2}&{}=\int_{0}^{L}\int_{\hb(x)}^{h(x)}\left\vert \int_{y}^{h(x)}\partial_y\wg(x,z)dz\right\vert^{2}\dd y\dd x\\
&\leq \int_{0}^{L}\int_{\hb(x)}^{h(x)}\norme{h-\hb}_{L^{\infty}_{\sharp}(0,L)}\int_{\hb(x)}^{h(x)}\vert\partial_y\wg(x,z)\vert^{2}\dd z\dd y\dd x\\
&\leq \norme{h-\hb}_{L^{\infty}_{\sharp}(0,L)}^{2}\norme{\nabla\wg}_{\BS{L}^{2}_{\sharp}(\mathcal{F}^{-}_{h})}^{2}.
\end{aligned}
\]
Moreover
\[\norme{\wg^\chi-\wg}_{\BS{H}^{1/2}_{\sharp}(\mathcal{C}_{\hb}^{h})}\leq \norme{\wg^\chi}_{\BS{H}^{1/2}_{\sharp}(\mathcal{C}_{\hb}^{h})} + \norme{\wg}_{\BS{H}^{1/2}_{\sharp}(\mathcal{C}_{\hb}^{h})}\leq \norme{\wg^\chi}_{\BS{H}^{1/2}_{\sharp}(\mathcal{C}_{\hb}^{h})} + \norme{\wg}_{\BS{H}^{1}_{\sharp}(\mathcal{F}^{-}_{h})}.
\]
We have 
$\wg^\chi=\etap\BS{e}_2=\wg_{|_{y=h}}$. Hence,
recalling the trace continuity estimate \eqref{est:trace}, we obtain
$$
\norme{\wg^\chi}_{\BS{H}^{1/2}_{\sharp}(0,L)} \leq C_A\norme{\wg}_{H^{1}_{\sharp}(\mathcal{F}^{-}_{h})}.
$$
We conclude that
\[\norme{\wg^\chi-\wg}_{\BS{H}^{1/2}_{\sharp}(\mathcal{C}_{\hb}^{h})}\leq C_A\norme{\wg}_{\BS{H}^{1}_{\sharp}(\mathcal{F}^{-}_{h})},\]
and by interpolation with the previous $L^2$-estimate, the following estimate in $H^s$ holds true
\begin{equation} \label{eq_enhaut}
\norme{\wg^\chi-\wg}_{\BS{H}^{s}_{\sharp}(\mathcal{C}^{h}_{\hb})}\leq C_A\norme{h-\hb}_{W^{1,\infty}_{\sharp}(0,L)}^{1-2s}\norme{\wg}_{\BS{H}^{1}_{\sharp}(\mathcal{F}^{-}_{h})}.
\end{equation}

\noindent{\textit{Estimates in $\mathcal{F}_{\hb}$:}} Consider the following splitting:
\begin{equation}\label{splitting.Du}
\wg^\chi - \wg = \text{Cof}(\nabla\chi)^{\top}(\wg\circ\chi - \wg) +(\text{Cof}(\nabla\chi)^{\top}-\mathbb I_2)\wg.
\end{equation}
Thanks to \cite[Proposition B.1]{Grubsolo} we obtain, for $s<s'\leq \kappa$ (see  \eqref{preli.Hs} for the estimate of the $H^{s'}$-norm of the cofactor matrix): 
\[\begin{aligned}
\norme{(\text{Cof}(\nabla\chi)^{\top}-\mathbb I_2)\wg}_{ \BS{H}^{s}_{\sharp}(\mathcal{F}_{\hb})}&\leq C_{A} \norme{(\text{Cof}(\nabla\chi)^{\top}-\mathbb I_2)}_{ \BS{H}^{s'}_{\sharp}(\CM)}\norme{\wg}_{\BS{H}^{1}_{\sharp}(\mathcal{F}^{-}_{h})}\\
&\leq  C_A \norme{h-\hb}_{W^{1,\infty}_{\sharp}(0,L)}^{\frac{\kappa-s'}{\kappa}}\norme{\wg}_{\BS{H}^{1}_{\sharp}(\mathcal{F}^{-}_{h})},\end{aligned}\]
Here we use  the continuity of the 
multiplication  
$H^{s'}(\mathcal F^{-}_{\hb}) \times H^{1}(\mathcal F^{-}_{h}) \to
H^{s}(\mathcal F^{-}_{\hb}).$ The  continuity constant of this mapping
may depend on $\hb.$ But, by  a standard change of variables argument, we see that it depends increasingly on $\|\hb\|_{W^{1,\infty}_{\sharp}(0,L)}$ only. This constant thus depends on $A$ only.
We now take care of the first term of the right-hand side of \eqref{splitting.Du}.
Let first note that we can bound the $L^2$--norm of $\wg \circ \chi - \wg$ as follows:
\[
\begin{aligned}
\int_{0}^{L}\int_{0}^{\hb(x)}\vert\wg(\chi(x,y)) - \wg(x,y)\vert^{2}\dd y\dd x&{}=\int_{0}^{L}\int_{0}^{\hb(x)}\vert \wg(x,m(x)(y+1)-1)-\wg(x,y)\vert^{2}\dd y\dd x\\
&\leq \int_{0}^{L}\int_{0}^{\hb(x)}\left\vert \int_{y}^{m(x)(y+1)-1}\partial_y\wg(x,z)dz\right\vert^{2}\dd y\dd x\\
&\leq \int_{0}^{L}\int_{0}^{\hb(x)}(m(x)-1)(y+1)\int_{y}^{m(x)(y+1)-1}\vert \partial_y\wg(x,z)\vert^{2}\dd z\dd y\dd x\\
&\leq (M+1)\norme{m-1}_{L^{\infty}_{\sharp}(0,L)}\norme{\hb}_{L^{\infty}_{\sharp}(0,L)}\norme{\nabla\wg}_{\BS{L}^{2}_{\sharp}(\mathcal{F}^{-}_{h})}^{2}.
\end{aligned}
\]
The previous estimate leads to
\[
\begin{aligned}
\norme{\wg\circ\chi-\wg}_{\BS{L}^{2}_{\sharp}(\mathcal{F}_{\hb})}&{}\leq \sqrt{(M+1)M}\norme{m-1}_{L^{\infty}_{\sharp}(0,L)}^{1/2}\norme{\wg}_{\BS{H}^{1}_{\sharp}(\mathcal{F}_{h}^{-})}\nonumber\\
&\leq C_M\norme{h-\hb}_{W^{1,\infty}_{\sharp}(0,L)}^\frac{1}{2}\norme{\wg}_{\BS{H}^{1}_{\sharp}(\mathcal{F}_{h}^{-})}.
\end{aligned}
\]
Finally, since $\norme{\text{Cof}(\nabla\chi)^{\top}}_{L^\infty_\sharp(\mathcal{F}_{\hb})}\leq C_A$, we deduce
\begin{equation}
\label{est:L2:Fh}
\norme{\text{Cof}(\nabla\chi)^{\top}(\wg\circ\chi-\wg)}_{\BS{L}^{2}_{\sharp}(\mathcal{F}_{\hb})}\leq C_A\norme{h-\hb}_{W^{1,\infty}_{\sharp}(0,L)}^\frac{1}{2}\norme{\wg}_{\BS{H}^{1}_{\sharp}(\mathcal{F}_{h}^{-})}.
\end{equation}
Next we remark that  $\wg\circ\chi - \wg$ is bounded in $\BS{H}_{\sharp}^1(\mathcal{F}^{-}_{\hb})$. Indeed $\wg\in \BS{H}_{\sharp}^1(\mathcal{F}^-_{h})$ and thus  $\wg\in \BS{H}_{\sharp}^1(\mathcal{F}^{-}_{\hb})$. It implies also that $\wg\circ\chi \in \BS{H}_{\sharp}^1(\mathcal{F}_{\hb})$ since $\chi$ belongs to  $W_{\sharp}^{1, \infty}(\Omega)$ and maps $\mathcal{F}^-_{\hb}$ in $\mathcal{F}^-_{h}$. Consequently, we have
$$
\norme{\wg\circ\chi - \wg}_{\BS{H}^{1}_{\sharp}(\mathcal{F}_{\hb})}\leq  \norme{\wg\circ\chi}_{\BS{H}^{1}_{\sharp}(\mathcal{F}_{\hb})} + \norme{\wg}_{\BS{H}^{1}_{\sharp}(\mathcal{F}_{\hb})}  \leq C_{A}\norme{\wg}_{\BS{H}^{1}_{\sharp}(\mathcal{F}^{-}_{h})}.
$$
Next, thanks to the fact that $\norme{\text{Cof}(\nabla\chi)^{\top}}_{\BS{H}^\kappa_\sharp(\mathcal{F}_{\hb})}\leq C_A$ (see \eqref{preli.Hs}), we have, since  $0\leq s<\kappa$ 
\begin{equation}
\norme{\text{Cof}(\nabla\chi)^{\top}(\wg\circ\chi - \wg)}_{\BS{H}^{s}_{\sharp}(\mathcal{F}_{\hb})}\leq C_{A}{\norme{\text{Cof}(\nabla\chi)^{\top}}_{\BS{H}^{\kappa}_{\sharp}(\Omega)}}\norme{\wg}_{\BS{H}^{1}_{\sharp}(\mathcal{F}^{-}_{h})}\leq C_{A}\norme{\wg}_{\BS{H}^{1}_{\sharp}(\mathcal{F}^{-}_{h})}.\label{est:H1:Fh}
\end{equation}
By interpolating \eqref{est:L2:Fh} and \eqref{est:H1:Fh}, we obtain
\[
\norme{\text{Cof}(\nabla\chi)^{\top}(\wg\circ\chi - \wg)}_{\BS{H}^{s}_{\sharp}(\mathcal{F}_{\hb})}\leq C_A(\norme{h-\hb}_{W^{1,\infty}_{\sharp}(0,L)})\norme{\wg}_{\BS{H}^{1}_{\sharp}(\mathcal{F}^{-}_{h})}.
\]
To summarize the estimates in $\mathcal{F}_{\hb}$ we have proved that
\begin{equation} \label{eq.milieu}
\norme{\wg^\chi-\wg}_{\BS{H}^{s}_{\sharp}(\mathcal{F}_{\hb})}\leq C_A(\norme{h-\hb}_{W^{1,\infty}_{\sharp}(0,L)})\norme{\wg}_{\BS{H}^{1}_{\sharp}(\mathcal{F}^{-}_{h})}.
\end{equation}

\noindent{\textit{Estimates in $\mathcal{C}_{-1}^{0}$.}} The function $\wg$ is equal to zero and we have to estimate only $\wg^\chi$. As previously we obtain a first bound in $L^{2}$ involving $C_A(\norme{h-\hb}_{W^{1,\infty}_{\sharp}(0,L))})$ and then we prove that $\wg^\chi$ is bounded in some $H^{s}$ and we conclude using interpolation. For the $L^{2}$--norm we have
\[
\norme{\wg^\chi}_{\BS{L}^{2}_{\sharp}(\mathcal{C}_{-1}^{0})}\leq \norme{(\text{Cof}\nabla\chi)^{\top}}_{\BS{L}^{\infty}_{\sharp}(\mathcal{C}_{-1}^{0})}\norme{\wg\circ\chi}_{\BS{L}^{2}_{\sharp}(\mathcal{C}_{-1}^{0})},
\]
and
\[
\begin{aligned}
\norme{\wg\circ\chi}^2_{\BS{L}^{2}_{\sharp}(\mathcal{C}_{-1}^{0})}=&\int_{0}^{L}\int_{-1}^{0}\vert \wg(\chi(x,y))\vert^{2}\dd y\dd x&{}\\
&=\int_{0}^{L}\int_{0}^{m(x)-1}\vert \wg(x,y)\vert^{2}\frac{dy}{m(x)}\dd x\\
&\leq (1+M)\int_{0}^{L}\int_{0}^{m(x)-1}\left\vert\int_{0}^{y}\partial_y\wg(x,z)dz\right\vert^{2}\dd y\dd x\\
&\leq (1+M)\int_{0}^{L}\int_{0}^{m(x)-1}y\int_{0}^{y}\vert\partial_z\wg(x,z)\vert^{2}\dd z\dd y\dd x\\
&\leq \frac{(1+M)}{2}\norme{m-1}_{L^{\infty}_{\sharp}(0,L)}^{2}\norme{\wg}_{\BS{H}^{1}_{\sharp}(\mathcal{F}^{-}_{h})}^{2}.
\end{aligned}
\]
Hence, using the estimates above with \eqref{prelim.h}-\eqref{preli.C1}, we conclude
\[\norme{\wg^\chi}_{\BS{L}^{2}_{\sharp}(\mathcal{C}_{-1}^{0})}\leq C_A\norme{h-\hb}_{W^{1,\infty}_{\sharp}(0,L)}\norme{\wg}_{\BS{H}^{1}_{\sharp}(\mathcal{F}^{-}_{h})}.
\]
Moreover, for any $0\leq \sigma<\kappa$, we have
\[
\begin{aligned}
\norme{\wg^\chi}_{\BS{H}^{\sigma}_{\sharp}(\mathcal{C}_{-1}^{0})}&{}\leq C_{1}\norme{(\text{Cof}\nabla\chi)^{\top}}_{\BS{H}^{\kappa}_{\sharp}(\mathcal{C}_{-1}^{0})}\norme{\wg\circ\chi}_{\BS{H}^{1}_{\sharp}(\mathcal{C}_{-1}^{0})}\\
&\leq C_A\norme{\wg}_{\BS{H}^{1}_{\sharp}(\mathcal{F}^{-}_{h})},
\end{aligned}
\]
and using interpolation up to choose $\sigma \in (s,\kappa)$
\[\norme{\wg^\chi}_{\BS{H}^{s}_{\sharp}(\mathcal{C}_{-1}^{0})}\leq C_A(\norme{h-\hb}_{W^{1,\infty}_{\sharp}(0,L)})\norme{\wg}_{\BS{H}^{1}_{\sharp}(\mathcal{F}^{-}_{h})}.\]
To summarize we have proved that
\begin{equation} \label{eq.dessous}
\norme{\wg-\wg^\chi}_{\BS{H}^{s}_{\sharp}(\mathcal C_{-1}^0)}\leq C_A(\norme{h-\hb}_{W^{1,\infty}_{\sharp}(0,L)})\norme{\wg}_{\BS{H}^{1}_{\sharp}(\mathcal{F}^{-}_{h})}.
\end{equation}
Finally, combining \eqref{eq_enhaut}-\eqref{eq.milieu}-\eqref{eq.dessous}, we obtain  the expected estimate
\begin{equation} \label{eq.total}
\norme{\wg-\wg^\chi}_{\BS{H}^{s}_{\sharp}(\CM)}\leq C_A(\norme{h-\hb}_{W^{1,\infty}_{\sharp}(0,L)})\norme{\wg}_{\BS{H}^{1}_{\sharp}(\mathcal{F}^{-}_{h})}.
\end{equation}

\noindent\textbf{{Step 5. Estimating ${{w_{2}^\chi}_{|_{y=0}}}.$}} 
First, we recall that
\[
{w_{2}^\chi}_{|_{y=0}}=-\partial_x m(\cdot)w_{1}(\cdot,m(\cdot)-1)+w_{2}(\cdot,m(\cdot)-1).
\]
This term is first estimated in $L^2$, then in $H^{2\sigma}$ for $0<\sigma< \kappa/2$, and the final estimate is obtained   by interpolation. First let estimate the $L^{2}$--norm
\[
\begin{aligned}
\norme{-\partial_x m(\cdot)w_{1}(\cdot,m(\cdot)-1)}_{L^{2}(0,L)}^{2}&{}\leq \norme{\partial_x m}_{L^{\infty}_{\sharp}(0,L)}^{2}\int_{0}^{L}\vert w_{1}(x,m(x)-1)\vert^{2}\dd x\\
&\leq C_{A}\int_{0}^{L}(m(x)-1)\int_{0}^{m(x)-1}\vert\partial_yw_{1}(x,y)\vert^{2}\dd y\dd x\\
&\leq C_{A}\norme{m-1}_{L^{\infty}_{\sharp}(0,L)}\norme{\wg}_{\BS{H}^{1}_{\sharp}(\mathcal{F}^{-}_{h})}\\
&\leq C_{A}\norme{h-\hb}_{L^{\infty}_{\sharp}(0,L)}\norme{\wg}_{\BS{H}^{1}_{\sharp}(\mathcal{F}^{-}_{h})}.
\end{aligned}\]
A similar estimate can be computed for $\norme{w_{2}(\cdot,m(\cdot)-1)}_{L^{2}_{\sharp}(0,L)}$ so that, we obtain 
\[\norme{{w_{2}^\chi}_{|_{y=0}}}_{{L}^{2}_{\sharp}(0,L)}\leq C_A\norme{h-\hb}_{W^{1,\infty}_{\sharp}(0,L)}\norme{\wg}_{\BS{H}^{1}_{\sharp}(\mathcal{F}^{-}_{h})}.\]
For $0<\sigma< \kappa/2$ we obtain, similarly to \eqref{eq.lapremieresurw2chi}
\[
\norme{{w_{2}^\chi}_{|_{y=0}}}_{{H}^{2\sigma}_{\sharp}(0,L)} \leq C_A(\norme{h-\hb}_{W^{1,\infty}_{\sharp}(0,L)})\norme{\wg}_{\BS{H}^{1}_{\sharp}(\mathcal{F}^{-}_{h})}.\]
Using interpolation we finally obtain (up to choose $s \leq \sigma < \kappa/2$) that
\[\norme{{w_{2}^\chi}_{|_{y=0}}}_{H^{2s}_{\sharp}(0,L)}\leq C_A(\norme{h-\hb}_{W^{1,\infty}_{\sharp}(0,L)})\norme{\wg}_{\BS{H}^{1}_{\sharp}(\mathcal{F}^{-}_{h})},\]
which concludes the proof of \eqref{est.projector.final} and the proof of the lemma is completed.
\end{proof}

\section{Proof of Theorem \ref{main.theorem}}

This section is devoted to the proof of existence of weak solutions of $(FS)_0$. 
So, we fix $T>0$ and initial data $(\ug^0,\eta^0,\eta^1)$ satisfying \eqref{CI1}--\eqref{CI4}. 
We recall that the strategy is to approximate this problem by a sequence of viscous problems $(FS)_\gamma$, $\gamma>0$, for which existence results are available. The proof is divided into three steps.  First, we analyze the Cauchy theory of $(FS)_{\gamma}$ when $\gamma >0$ and prove that the sequence of solutions converges, up to a subsequence,  when $\gamma \to 0.$   We show in particular that  possible weak limits  are  candidates to be weak solutions up to obtaining 
strong compactness of approximate velocities in $L^2.$ As explained in the introduction, this strong compactness property is the cornerstone of the analysis.  Our proof builds on the projection/approximation argument  provided by \cite{SanMartin-Starovoitov-Tucsnak} 
in the fluid--solid case. In our fluid--elastic setting, it requires to build a uniform bound by below $\hb$ of the sequence
of approximate structure deformations (in order to construct a fluid domain independent of $\gamma$ on which the Navier--Stokes equations are satisfied by the sequence of approximate solutions to be able to apply Aubin--Lions Lemma for projections of the velocities). The second step of the proof is devoted to the construction of $\hb$ and
the analysis of its properties. We  then complete the proof of the $L^2$--strong compactness. This last step relies in particular on the continuity result obtained in subsection \ref{continuite_h}.

\subsection{Step 1. Construction of a candidate weak-solution.}
\label{sec:proof1}
Let us recall the strong existence result on $(0,T)$ stated in \cite[Theorem 1]{Grandmont-Hillairet}. 
Given $\gamma>0$ and initial data $(\ug_\gamma^{0},\eta_\gamma^{0},\eta_\gamma^{1})$ satisfying
\begin{align}
&(\eta_\gamma^{0},\eta_\gamma^{1})\in H^{3}_{\sharp}(0,L)\times H^{1}_{\sharp}(0,L),\label{CI-gamma1}\\
&\ug_\gamma^{0}\in \BS{H}_{\sharp}^{1}(\mathcal{F}_{h_{\gamma}^{0}}), \DIV{\ug_\gamma^{0}}=0\text{ in }\mathcal{F}_{h^{0}},\label{CI-gamma2}\\
&\ug_\gamma^{0}(x,0)=0, \text{ and }\ug_\gamma^{0}(x,h_\gamma^{0}(x))=\eta_\gamma^{1}(x)\BS{e}_{2},  \forall x\in [0,L],\label{CI-gamma3}\\
&\min_{x\in[0,L]}h_\gamma^{0}(x)>0\text{ and }\int_{0}^{L}\eta_\gamma^{1}(x)\dd x=0,\label{CI-gamma4}
\end{align}
the system $(FS)_{\gamma}$ admits a unique strong solution defined on $(0, T)$.  This solution satisfies moreover $\min_{x\in [0, L]} h_\gamma(x, t) >0$ for all $t\in [0, T]$.

\medskip

In order to apply this result we  now explain  the construction of a sequence of regular initial data $(\ug_\gamma^0,\eta_\gamma^0,  \eta_\gamma^1)_{\gamma >0}$ approximating $(\ug^0, \eta^0, \eta^1)$. First we construct $\eta^0_\gamma\in H^{3}_{\sharp}(0,L)$ by a standard convolution of $\eta^0$ with a regularizing kernel. Since $\eta^0$ satisfies \eqref{CI1}, this sequence is  uniformly bounded in $H^2_\sharp(0,L)$ and satisfies
\begin{align*}
& \eta^0_\gamma\rightarrow \eta^0, \text{ in }H^2_\sharp(0,L),\\
&  \| \eta^0_\gamma- \eta^0\|_{\mathcal{C}_\sharp^0([0, L])}\leq \gamma \|\eta^0\|_{H^2_\sharp(0, L)}\leq C\gamma.
\end{align*}
Since $\min_{x\in[0, L]}h^0(x)>0$, there exists $\lambda>0$ verifying $\min_{x\in[0, L]}h_\gamma^0(x)>\lambda >0$ for $\gamma$ small enough.
We next construct $\eta^1_\gamma \in H^{1}_{\sharp}(0,L)\cap L^2_{\sharp, 0}(0, L)$.
Since $\eta^1$ satisfies \eqref{CI2}, this second sequence enjoys the following properties:
\begin{align*}
& \eta^1_\gamma\rightarrow \eta^1, \text{ in }L^2_\sharp(0,L),\\
&  \| \eta^1_\gamma \|_{{L}^2_{\sharp,0}(0, L)}\leq \|\eta^1\|_{L^2_\sharp(0, L)}\leq C.
\end{align*}
We now build the approximate initial velocity fields $\ug^0_{\gamma}.$
A key difficulty here is to match the continuity of velocity field at the structure interface together with preserving the divergence free condition, taking into account
that the approximation is defined on an approximate domain depending on $\gamma.$
To handle this difficulty, we   first define the extension of $\eta^1$ to the whole domain using the operator $\mathcal R_{\lambda}$
as defined in \eqref{def:R}.  Next we consider $\overline{\ug^0}-\mathcal{R}_\lambda(\eta^1)$ which is in $K[h^0]$ and satisfies moreover $\overline{\ug^{0}}-\mathcal{R}_\lambda(\eta^1)= \BS{0}$ in $\mathcal{S}_{h^0}\cup \mathcal{C}_{-1}^0$.  Then we introduce the vertical contraction operator denoted by
\[
\vg \mapsto \vg_\sigma(x, y)=(\sigma v_1(x, \sigma y), v_2(x, \sigma y)) \quad \forall \, \sigma >0.
\]
We emphasize that this contraction operator preserves the divergence free constraint. By choosing 
$\sigma_{\gamma} = 1+ 2C\gamma/\lambda$ (with the constant $C$ above), we have $(\overline{\ug^{0}}-\mathcal{R}_\lambda(\eta^1))_{\sigma_\gamma}= \BS{0}$ in $\mathcal{S}_{h_\gamma^0}\cup \mathcal{C}_{-1}^0$, and that $(\overline{\ug^{0}}-\mathcal{R}_\lambda(\eta^1))_{\sigma_\gamma}$ converges  to $\overline{\ug^{0}}-\mathcal{R}_\lambda(\eta^1)$ in $\BS{L}_\sharp^2(\CM)$ when $\gamma \to 0.$ Moreover $(\overline{\ug^{0}}-\mathcal{R}_\lambda(\eta^1))_{\sigma_\gamma}$ belongs to $\BS{L}^2_{\sharp}(\mathcal{F}_{h_\gamma^0})$, is divergence free and satisfies $(\overline{\ug^{0}}-\mathcal{R}_\lambda(\eta^1))_{\sigma_\gamma}\cdot\BS{n} =0$ on $\Gamma_{h^0_\gamma}$ and $(0, L)\times\{0\}$. Thus we approximate thanks to standard arguments (by truncation and regularization of the stream function for instance)  this function by a divergence free function $(\overline{\ug^{0}}-\mathcal{R}_\lambda(\eta^1))_{\gamma}$ in $\BS{H}^1_{\sharp}(\mathcal{F}_{h_\gamma^0})$  vanishing in a neighbourhood of  $\Gamma_{h^0_\gamma}$ and $(0, L)\times\{0\}$. We may then set 
\[
\ug_{\gamma}^0 = \left( \overline{(\overline{\ug^{0}}-\mathcal{R}_\lambda(\eta^1))_{\gamma} }+\mathcal{R}_\lambda(\eta_\gamma^1)\right)_{|_{\mathcal{F}_{h_\gamma^0}}}.
\]
Straightforward computations show that $\ug_{\gamma}^0$ satisfies \eqref{CI-gamma2}-\eqref{CI-gamma3}. Moreover, 
remarking that the operator $\mathcal R_{\lambda}$ is continuous from $L^2_{\sharp,0}(0,L)$ into $L_\sharp^2(\Omega)$ we have as $\gamma$ goes to zero

\begin{align*}
& \overline{\ug^0_\gamma}\rightarrow \overline{\ug^0}, \text{ in }L^2_\sharp(\Omega),\\
&  \| \ug^0_\gamma \|_{{L}^2(\mathcal F^0_{h})}\leq C\left(  \|\eta^1\|_{L^2_\sharp(0, L)} + \|\ug^0\|_{L^2(\mathcal F_h^0)} \right)\leq C,
\end{align*}
where $C$ does not depend on $\gamma$.

We now  apply the result on existence of a strong solution for the viscous problem $(FS)_{\gamma>0}$.  For fixed $\gamma >0$ the unique solution $(\ug_\gamma,\eta_\gamma)$ is global in time so that it exists on any time interval $(0,T)$.
 The first step is to verify that  $\overline{\ug}_{\gamma}$ as defined by
\[
\overline{\ug}_{\gamma} = 
\left\{
\begin{array}{ll}
\partial_t \eta_{\gamma} \BS{e}_2, & \text{ in $\mathcal S_{h_{\gamma}},$} \\
\ug_{\gamma} , & \text{ in $\mathcal F_{h_{\gamma}},$} \\
\BS{0}, & \text{ in $\mathcal C_{-1}^0,$}
\end{array}\right.
\]
together with $\eta_{\gamma}$ is  a pair of weak solution 
to $(FS)_{\gamma}$ in the sense of Definition \ref{def.ws}. First, we note that $(\ug_\gamma,\eta_\gamma)$
satisfies estimate \eqref{energy.estimates} so that we can define a constant $M$ involved in the definition
of our weak solution framework. By construction we have that
\[
\eta_{\gamma} \in L^{\infty}(0,T;H^{2}_{\sharp}(0,L)) \cap W^{1,\infty}(0,T;L^2_{\sharp, 0}(0,L)).
\]
Moreover, \[
\overline{\ug}_{\gamma} \in  L^{\infty}(0,T ; L^2_{\sharp}(\Omega)), \quad 
\nabla \overline{\ug_{\gamma}}_{|_{\widehat{\mathcal F}_{h_{\gamma}}}} = \mathbf{1}_{\widehat{\mathcal F}_{h_{\gamma}}}\nabla \ug_{\gamma}  \in L^2(\widehat{\mathcal F}^{-}_{h_{\gamma}}),
\]
and 
Lemma \ref{lem:bar}, implies that 
\[
(\overline{\ug}_{\gamma}(t),\eta_{\gamma})(t) \in X[h_{\gamma}(t)] \text{ for a.e. $t \in (0,T)$}
 .
\] 
Thus the regularity statement $i)$  of Definition \ref{def.ws} is satisfied. Moreover the solution satisfies the kinematic condition $\ug_\gamma(t, x, 1+\eta_\gamma(t, x))=\partial_t\eta_\gamma(t,x)\BS{e}_2$ is satisfied on $(0, L)$ which implies that $\partial_t\eta_\gamma\in L^2(0, T ; H^{1/2}_\sharp(0, L))$ as the trace of  ${\overline{\ug}_{\gamma}}_{|_{\widehat{\mathcal F}_{h_\gamma}}}$. Remember here that $\min_{x\in [0, L]} h_\gamma(x, t) >0$ for all $t\in [0, T]$ so that $\mathcal{F}_{h_\gamma}$ is a Lipschitz domain and consequently   ${\overline{\ug}_{\gamma}}_{\vert_{y=h_\gamma}}$ is well defined. Thus the second item  $ii)$ of Definition  \ref{def.ws}  holds true. Then, we note that, thanks to the regularity of solutions constructed
in \cite[Theorem 1]{Grandmont-Hillairet}, the system \eqref{Navier-Stokes}--\eqref{Beam.equation} is satisfied pointwise so that
we can multiply the system with test functions $(\wg_{\gamma},d_{\gamma})$  for which  the requirements in item
$iii)$ of Definition \ref{def.ws} are  satisfied and obtain \eqref{weak.formulation.FS} after integration by parts.

Moreover, we note that the solution $(\ug_{\gamma},\eta_{\gamma})$ satisfies the energy estimate \eqref{energy.estimates} with a right hand side that converges to 
\[
\frac{1}{2}\left(\rho_{f}\int_{\mathcal{F}_{h^0}}\vert\ug^0\vert^{2} \dd \xg+ \rho_{s}\int_{0}^{L}\vert\eta^1\vert^{2}\dd x+\beta\int_{0}^{L}\vert\partial_{x}\eta^0\vert^{2}\dd x+\alpha\int_{0}^{L}\vert\partial_{xx}\eta^0\vert^{2}\dd x\right).
\]
when $\gamma \to 0.$
 Consequently,  the sequence $(\overline{\ug}_{\gamma},\eta_{\gamma})_{\gamma >0}$ satisfies the following bounds:
\begin{align}
&\overline{\ug}_{\gamma}\text{ is uniformly bounded in }\gamma\text{ in }L^{\infty}(0,T;\BS{L}_\sharp^{2}(\mathcal{F}_{h_{\gamma}(t)})),\label{bound-u-1}\\
&\|\nabla\overline{\ug}_{\gamma}\|_{L^2(\widehat{\mathcal F}^{-}_{h_{\gamma}})} \text{ is uniformly bounded in }\gamma,
\label{bound-u-2}\\
&\eta_{\gamma}\text{ is uniformly bounded in }\gamma\text{ in }L^{\infty}(0,T;H^{2}_{\sharp}(0,L))\cap W^{1,\infty}(0,T;L_\sharp^{2}(0,L)).\label{est-eta}
\end{align}

Furthemore the structure velocity $\partial_t\eta_\gamma$ is bounded uniformly  with respect to $\gamma$ is $L^2(0, T ; H^{1/2}_\sharp(0, L))$ as the trace  ${\overline{\ug}_{\gamma}}_{\vert_{y=h_{\gamma}}}$.

Finally  the sequence  $(\overline{\ug}_{\gamma})_{\gamma>0}$ satisfies additional uniform estimates  that are summarized in the following lemma:

\begin{lemma}
\label{lem:extension-u}
The sequence $(\overline{\ug}_{\gamma})_{\gamma >0}$ is uniformly bounded in $L^{4}(\CMT)$ and in 
$L^{2}(0,T;\BS{H}^{s}_{\sharp}(\CM))$ for arbitrary $s<1/2.$
\end{lemma}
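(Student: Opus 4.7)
The plan is to derive the two regularity statements from the uniform bounds \eqref{bound-u-1}--\eqref{est-eta} together with the additional information that $\partial_t \eta_\gamma$ is uniformly bounded in $L^2(0,T; H^{1/2}_\sharp(0,L))$ (obtained just above as the trace of $\ug_\gamma$ on $y=h_\gamma$). A first preliminary observation: since $\overline{\ug}_\gamma$ vanishes on $y=-1$ and since $h_\gamma$ is uniformly bounded in $H^2_\sharp(0,L) \hookrightarrow W^{1,\infty}_\sharp(0,L)$, Poincar\'e's inequality combined with \eqref{bound-u-2} gives that
\[
\overline{\ug}_\gamma \text{ is uniformly bounded in } L^2(0,T; \BS{H}^1_\sharp(\mathcal F^-_{h_\gamma})),
\]
with a constant depending only on $M$.

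For the $L^2(0,T; \BS{H}^s_\sharp(\Omega))$ bound with $s < 1/2$, I would apply Lemma \ref{lem:bar}, case (2), pointwise in time. The hypotheses are satisfied: $\ug_\gamma$ is divergence free with $\ug_\gamma\cdot\BS{e}_2 = 0$ on $y=0$ (since $\ug_\gamma$ vanishes there) and the kinematic condition ii) of Definition \ref{def.ws} gives $\ug_{\gamma|_{y=h_\gamma}}\cdot\BS{n}_{h_\gamma} = (0,\partial_t \eta_\gamma)^\top\cdot\BS{n}_{h_\gamma}$. Since the constant in the lemma depends only on $M$, one obtains
\[
\|\overline{\ug}_\gamma(t)\|_{\BS{H}^s_\sharp(\Omega)} \leq C(M)\bigl(\|\partial_t\eta_\gamma(t)\|_{H^s_\sharp(0,L)} + \|\ug_\gamma(t)\|_{H^s(\mathcal F_{h_\gamma(t)})}\bigr),
\]
and squaring and integrating in $t$ yields a uniform bound thanks to the $L^2_t H^{1/2}_x$ control on $\partial_t\eta_\gamma$ and the $L^2_t H^1_x$ control on $\ug_\gamma$.

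For the $L^4(\widehat{\Omega})$ bound, the main point is that $\overline{\ug}_\gamma$ is not uniformly in $H^1(\Omega)$ (only $H^s$ with $s<1/2$), so a direct 2D Sobolev embedding does not reach $L^4$. The idea is instead to split $\widehat{\Omega} = \widehat{\mathcal F}^-_{h_\gamma} \cup \widehat{\mathcal S}_{h_\gamma}$ and treat each piece separately. On $\widehat{\mathcal F}^-_{h_\gamma}$, applying the 2D Gagliardo--Nirenberg inequality $\|u\|_{L^4}^4 \leq C\|u\|_{L^2}^2\|u\|_{H^1}^2$ (with a constant uniform in $\gamma$ since $h_\gamma$ is uniformly Lipschitz) and integrating in time gives
\[
\int_0^T\|\overline{\ug}_\gamma\|_{L^4(\mathcal F^-_{h_\gamma})}^4\,\dd t \leq C\,\|\overline{\ug}_\gamma\|_{L^\infty_t L^2_x}^2\int_0^T\|\overline{\ug}_\gamma\|_{H^1(\mathcal F^-_{h_\gamma})}^2\,\dd t.
\]
On $\widehat{\mathcal S}_{h_\gamma}$, $\overline{\ug}_\gamma(x,y,t) = \partial_t\eta_\gamma(x,t)\BS{e}_2$ is independent of $y$, so $\|\overline{\ug}_\gamma\|_{L^4(\mathcal S_{h_\gamma})}^4 \leq 2M\|\partial_t\eta_\gamma\|_{L^4(0,L)}^4$, and using the 1D interpolation $H^{1/4}_\sharp(0,L) \hookrightarrow L^4_\sharp(0,L)$ together with $[L^2,H^{1/2}]_{1/2} = H^{1/4}$ one gets $\|f\|_{L^4}^4 \leq C\|f\|_{L^2}^2\|f\|_{H^{1/2}}^2$, hence
\[
\int_0^T\|\partial_t\eta_\gamma\|_{L^4(0,L)}^4\,\dd t \leq C\,\|\partial_t\eta_\gamma\|_{L^\infty_t L^2_x}^2\int_0^T\|\partial_t\eta_\gamma\|_{H^{1/2}_\sharp(0,L)}^2\,\dd t.
\]
Both right-hand sides are uniformly bounded in $\gamma$ by the previously obtained estimates, which yields the claim.

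The step that requires most care is verifying that the constants in Gagliardo--Nirenberg on $\mathcal F^-_{h_\gamma}$ are genuinely $\gamma$-independent; this follows since $h_\gamma$ is uniformly controlled in $W^{1,\infty}_\sharp(0,L)$ by the embedding $H^2_\sharp(0,L) \hookrightarrow W^{1,\infty}_\sharp(0,L)$ combined with \eqref{est-eta}, so a uniform extension operator from $H^1(\mathcal F^-_{h_\gamma})$ to $H^1(\Omega)$ can be built and the Sobolev constants on $\Omega$ then dominate.
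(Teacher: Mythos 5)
Your proof takes essentially the same route as the paper: Lemma \ref{lem:bar} (the normal-trace case, appropriate since $s<1/2$) for the $L^{2}(0,T;\BS{H}^{s}_{\sharp}(\CM))$ bound, and for $L^{4}$ the same split of $\CM$ into $\mathcal F^{-}_{h_{\gamma}}$ and $\mathcal S_{h_{\gamma}}$ with 2D Ladyzhenskaya--Gagliardo--Nirenberg interpolation on the first piece and 1D interpolation $L^{\infty}_{t}L^{2}_{x}\cap L^{2}_{t}H^{1/2}_{x}\hookrightarrow L^{4}_{t}L^{4}_{x}$ on the second. The single point where you diverge is the justification of the $\gamma$-uniformity of the constant in the 2D interpolation on $\mathcal F^{-}_{h_{\gamma}}$: the paper pulls $\overline{\ug}_{\gamma}$ back to the fixed strip $\mathcal C^{1}_{0}$ by the explicit change of variables $z\mapsto (h_{\gamma}(x)+1)z-1$ and tracks the Jacobians (costing a factor $(1+\|h_{\gamma}\|_{W^{1,\infty}_{\sharp}})$), whereas you invoke a uniform Sobolev extension operator for the uniformly Lipschitz subgraph domains. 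Both are standard and correct; the paper's version has the small advantage of being completely explicit, while yours hides the same domain-geometry dependence inside the extension-operator constant.
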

\begin{proof}The bound in  $L^{2}(0,T;\BS{H}^{s}_{\sharp}(\CM))$ comes  again from Lemma \ref{lem:bar}. We next take care of the $L^{4}(\CMT)$ uniform bound.
To prove it, it is sufficient  to obtain independent uniform bounds for the restrictions
of $\overline{\ug}_{\gamma}$ to $\widehat{\mathcal S}_{h_{\gamma}}$ and to $\widehat{\mathcal F}^{-}_{h_{\gamma}}$ for the $L^4$--norm.  

In  $\widehat{\mathcal{S}}_{h_{\gamma}}$ we already know that $\overline{\ug}_{\gamma}=\partial_{t}\eta_{\gamma}\BS{e}_{2}$ where $\partial_t{\eta}_{\gamma}$ is bounded in $L^{2}(0,T;H_\sharp^{1/2}(0,L))$. Moreover $\partial_{t}\eta_{\gamma}$ is also bounded in $L^{\infty}(0,T;L_\sharp^{2}(0,L)).$ By interpolation, we obtain  that  $\partial_{t}\eta_{\gamma}$ --~and consequently resp. $\overline{\ug}_{\gamma}$~-- is uniformly bounded in $L^{4}(0,T;L_\sharp^{4}(0,L))$, resp. $L^4(\widehat{\mathcal S}_{h_{\gamma}})$. 

We would like to apply a similar interpolation argument on the domain $\mathcal F_{h_{\gamma}}^{-}$ using the uniform bounds on the restrictions of $\overline{\ug}_{\gamma}$ and its gradient in $\mathcal F_{h_{\gamma}}^{-}$. To track the dependencies with respect to $h_{\gamma}$ to be able to ensure that the interpolation argument leads to uniform bounds, we use a change of variables. We denote by
\[
\tilde{\ug}_{\gamma}(t,x,z) = \overline{\ug}_{\gamma}(t,x,(h_{\gamma}(x) + 1)z - 1 ) \quad \forall \, (t,x,z) \in (0,T) \times \mathcal{C}_0^1.
\]
Since $0 \leq h_{\gamma} \leq M,$ straightforward computations show that, for a.e. $t \in (0,T)$ we have
\[
\|\overline{\ug}_{\gamma}(t)\|_{L_\sharp^4(\mathcal F_{h_{\gamma}(t)})}  \leq (\|h_{\gamma}(t)\|_{L^{\infty}_{\sharp}(0,L)}+1)^{1/4} \|\tilde{\ug}_{\gamma}(t)\|_{L_\sharp^4( \mathcal{C}_0^1)},
\]
and 
\begin{align*}
\|\tilde{\ug}_{\gamma}(t)\|_{L_\sharp^2( \mathcal{C}_0^1)} & \leq \|\overline{\ug}_{\gamma}(t)\|_{L_\sharp^2(\mathcal F_{h_{\gamma}(t)})},  \\
\|\nabla \tilde{\ug}_{\gamma}(t)\|_{L_\sharp^2( \mathcal{C}_0^1)}  & \leq (1+ \|h_{\gamma}(t)\|_{W^{1,\infty}_{\sharp}(0,L)}) \|\nabla \overline{\ug}_{\gamma}\|_{L_\sharp^2(\mathcal F^-_{h_{\gamma}(t)})}.
\end{align*}
However, the following interpolation inequality holds true
\[
\|\tilde{\ug}_{\gamma}(t)\|_{L_\sharp^4( \mathcal{C}_0^1)} \leq C \|\tilde{\ug}_{\gamma}(t)\|^{1/2}_{L_\sharp^2( \mathcal{C}_0^1)}\|\nabla \tilde{\ug}_{\gamma}(t)\|^{1/2}_{L_\sharp^2( \mathcal{C}_0^1)},
\]
and thus
\[
\|\overline{\ug}_{\gamma}(t)\|_{L_\sharp^4(\mathcal F^{-}_{h_{\gamma}(t)})} \leq C(1+ \|h_{\gamma}(t)\|_{W^{1,\infty}_{\sharp}(0,L)})  \|{\ug}_{\gamma}(t)\|^{1/2}_{L_\sharp^2(\mathcal F^{-}_{h_{\gamma}(t)})}\|\nabla \tilde{\ug}_{\gamma}(t)\|^{1/2}_{L_\sharp^2(\mathcal{C}_0^1)}.
\]
Applying \eqref{bound-u-1} and \eqref{bound-u-2} together with the uniform bound for 
$h_{\gamma}$ in $L^{\infty}(0,T;W^{1,\infty}_{\sharp}(0,L))$ coming from \eqref{est-eta}, we obtain the desired bound on $\overline{\ug}_{\gamma}$ which is uniformly bounded in  $L^4(\mathcal F^{-}_{h_{\gamma}}).$ \end{proof}

We now prove  the existence of cluster points of the sequence $(\overline{\ug}_{\gamma},\eta_{\gamma})_{\gamma >0}.$ First, thanks to \eqref{est-eta},  and to the compact embedding \eqref{inclusion-eta}
$$
\begin{array}{ll}
\eta_{\gamma}\rightarrow\eta& \text{ uniformly in }\mathcal{C}^{0}([0,T];\mathcal{C}^{1}_{\sharp}(0,L)) ,\\
\eta_{\gamma}\rightharpoonup\eta& \text{ weakly} -\star \text{ in }W^{1,\infty}(0,T;L^2_{\sharp}(0,L)).
\end{array}
$$

 Next, using the energy estimates \eqref{energy.estimates} and Lemma \ref{lem:extension-u}, we may construct a divergence free function  $\overline{\ug}\in L^{\infty}(0,T;\BS{L}^{2}_{\sharp}(\CM))\cap L^{4}(\CMT)$, such that, up to a subsequence,  the following convergences hold:
$$
\begin{array}{ll}
 \overline{\ug}_{\gamma}\rightharpoonup\overline{\ug},& \text{ weakly}-\star \text{ in }L^{\infty}(0,T;\BS{L}^{2}_{\sharp}(\CM)),\\
\overline{\ug}_{\gamma}\rightharpoonup\overline{\ug},&\text{ weakly in } L^{4}(\CMT).
\end{array}
$$

We now verify  that any cluster point $(\overline{\ug}, \eta)$ of the sequence $(\ug_{\gamma},\eta_{\gamma})_{\gamma >0}$ enjoys the properties of  Definition \ref{def.ws}, which defines the weak solutions of the limit coupled system $(FS)_0.$ For simplicity, we do not relabel the sequence converging to $(\overline{\ug}, \eta).$ We first note that, for fixed $\gamma >0,$ we have
\[
\int_0^L \partial_t\eta_{\gamma}=0.
\]
This property is conserved in the weak limit so that $\partial_t\eta$ is mean free globally in time. Next, we verify that $(\overline{\ug}(t),\partial_t \eta) \in X[h(t)]$ for a.e. $t \in (0,T).$ The divergence free condition is verified at the limit. Moreover, we note that,
the property $\overline{\ug}_{\gamma} = 0$ on $\widehat{\mathcal C}_{-1}^0$ is preserved in the weak limit. Furthermore, since $\eta_\gamma$ uniformly converges towards $\eta$, we easily obtain  $\overline{\ug} =\partial_{t}\eta\BS{e}_2,\text{ in }\widehat{\mathcal{S}}_{h}$, by testing the weak convergence of $ \overline{\ug}_{\gamma}$  and $ \partial_{t}\eta_\gamma\BS{e}_2$, which are equal in  $\widehat{\mathcal S}_{h_\gamma}$, against
functions $\BS{\varphi} \in \mathcal C^{\infty}_c(\widehat{\mathcal S}_h)$.


Consequently, $(\overline{\ug}(t),\partial_t \eta) \in X[h(t)].$   We now prove that $\overline{\ug}$ has better regularity in  $\widehat{\mathcal F}^-_{h}$ as stated in the following lemma:

\begin{lemma}\label{lemma.gradient}
We have $\nabla  \overline{\ug} \in L^2(\widehat{\mathcal F}_h)$ and the sequence $\rho_\gamma^-\nabla \overline{\ug}_{\gamma}$ converges to $\rho^-\nabla  \overline{\ug}$ weakly in $L^{2}(\CMT).$
\end{lemma}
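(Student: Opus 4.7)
The plan is to extract a weak $L^2$ limit of $\mathbf{1}_{\widehat{\mathcal F}^-_{h_\gamma}}\nabla\overline{\ug}_\gamma$ on the fixed cylinder $\widehat{\Omega}_T$, and then identify the limit on the two disjoint open sets $\widehat{\mathcal F}^-_h$ and $\widehat{\Omega}_T\setminus\overline{\widehat{\mathcal F}^-_h}$ using the uniform convergence $\eta_\gamma\to\eta$ in $\mathcal C^0([0,T];\mathcal C^1_\sharp(0,L))$ established earlier. Denote $\rho_\gamma^-=\mathbf 1_{\widehat{\mathcal F}^-_{h_\gamma}}$ and $\rho^-=\mathbf 1_{\widehat{\mathcal F}^-_h}$. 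By \eqref{bound-u-2}, $\rho_\gamma^-\nabla\overline{\ug}_\gamma$ is bounded in $L^2(\widehat\Omega_T)$, so up to a subsequence it converges weakly to some $G\in L^2(\widehat\Omega_T)$.

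Next I would identify $G$ on $\widehat{\mathcal F}^-_h$. Take an arbitrary matrix-valued test function $\varphi\in\mathcal C^\infty_c(\widehat{\mathcal F}^-_h)$. Since $h_\gamma\to h$ uniformly and $\text{supp}\,\varphi$ is a compact subset of the open set $\widehat{\mathcal F}^-_h=\{(t,x,y):-1<y<h(t,x)\}$, there exists $\gamma_0>0$ such that $\text{supp}\,\varphi\subset\widehat{\mathcal F}^-_{h_\gamma}$ for every $\gamma\in(0,\gamma_0)$. For such $\gamma$, integration by parts on $\widehat{\mathcal F}^-_{h_\gamma}$ yields
\[
\int_{\widehat\Omega_T}\rho_\gamma^-\nabla\overline{\ug}_\gamma:\varphi
=\int_{\widehat{\mathcal F}^-_{h_\gamma}}\nabla\overline{\ug}_\gamma:\varphi
=-\int_{\widehat\Omega_T}\overline{\ug}_\gamma\cdot(\text{div}\,\varphi),
\]
where the last integral is unchanged if we extend by $0$ outside the support of $\varphi$. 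Passing to the limit using the $L^2(\widehat\Omega_T)$-weak convergence on the left and the weak convergence of $\overline{\ug}_\gamma$ to $\overline{\ug}$ in $L^4(\widehat\Omega_T)$ (Lemma \ref{lem:extension-u}) on the right gives
\[
\int_{\widehat\Omega_T} G:\varphi=-\int_{\widehat\Omega_T}\overline{\ug}\cdot(\text{div}\,\varphi)
\quad\text{for every }\varphi\in\mathcal C^\infty_c(\widehat{\mathcal F}^-_h).
\]
This means $G=\nabla\overline{\ug}$ in $\mathcal D'(\widehat{\mathcal F}^-_h)$; since $G\in L^2(\widehat\Omega_T)$, this proves $\nabla\overline{\ug}\in L^2(\widehat{\mathcal F}^-_h)$, hence in $L^2(\widehat{\mathcal F}_h)$ as $\overline{\ug}=0$ on $\widehat{\mathcal C}_{-1}^0$.

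To conclude it remains to show $G=0$ a.e. on $\widehat\Omega_T\setminus\overline{\widehat{\mathcal F}^-_h}$. Let $\varphi\in\mathcal C^\infty_c(\widehat\Omega_T\setminus\overline{\widehat{\mathcal F}^-_h})$; then there exists $\varepsilon>0$ with $y\geq h(t,x)+\varepsilon$ on $\text{supp}\,\varphi$, so by uniform convergence of $h_\gamma$ we have $\rho_\gamma^-=0$ on $\text{supp}\,\varphi$ for all sufficiently small $\gamma$, and thus $\int\rho_\gamma^-\nabla\overline{\ug}_\gamma:\varphi=0$. Passing to the weak limit shows $\int G:\varphi=0$, so $G=0$ on the complement (the set $\{y=h\}$ has Lebesgue measure zero). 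Combining the two identifications, $G=\rho^-\nabla\overline{\ug}$ a.e. on $\widehat\Omega_T$. Since this limit does not depend on the extracted subsequence, the whole sequence converges weakly in $L^2(\widehat\Omega_T)$.

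No serious obstacle arises: the argument is a localization/integration-by-parts passage to the limit in a moving-domain setting, and the only technical point is ensuring that test functions with compact support in $\widehat{\mathcal F}^-_h$ or in its complement sit inside (respectively, outside) $\widehat{\mathcal F}^-_{h_\gamma}$ for $\gamma$ small. This follows at once from the uniform convergence $h_\gamma\to h$ in $\mathcal C^0([0,T]\times[0,L])$ deduced from \eqref{inclusion-eta} and \eqref{est-eta}.
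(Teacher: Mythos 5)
Your argument is correct and follows the same route as the paper's proof: extract a weak $L^2(\widehat\Omega_T)$ limit of the extended-by-zero gradients using the uniform bound, then identify it separately on $\widehat{\mathcal F}^-_h$ and on its complement by testing against compactly supported test functions and exploiting the uniform convergence $h_\gamma\to h$. You simply spell out the integration-by-parts step that the paper leaves implicit when identifying the limit on $\widehat{\mathcal F}^-_h$.
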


\begin{remark}
In the previous statement, we use the convention that if $\mathcal O \subset \widehat{\Omega}$ and $f \in L^2(\mathcal O)$ then $\mathbf{1}_{\mathcal O} f \in L^2(\widehat{\Omega})$ is the extension by $0$ of this $L^2(\mathcal O)$--function. 
\end{remark}
\begin{proof}
We remind that $\nabla \overline{\ug}_{\gamma} \in \BS{L}^2(\widehat{\mathcal F}^{-}_{h_{\gamma}})$ so that $\rho_\gamma^-\nabla \overline{\ug}_{\gamma}$ corresponds to the extension by $\BS{0}$ of this vector field. Because of \eqref{bound-u-2}, $\rho_\gamma^-\nabla \overline{\ug}_{\gamma}$  is uniformly bounded in $\BS{L}^{2}(\widehat{\Omega})$.  Thus $\rho^-_\gamma \nabla \overline{\ug}_{\gamma}$ converges weakly to some $\bf{z}$ in $L^{2}(0,T;\BS{L}_\sharp^{2}(\CM))$. Thanks to the uniform convergence of $h_{\gamma}$ to $h$, we may then compute $\bf{z}$ by testing
the weak convergence of $\rho^-_\gamma \nabla \overline{\ug}_{\gamma}$ against
functions $\BS{\varphi} \in \mathcal C^{\infty}_c(\widehat{\mathcal S}_h)$ and 
$\BS{\varphi} \in \mathcal C^{\infty}_c(\widehat{\mathcal F}_h^{-})$ respectively. This implies that ${\bf z}_{\vert\widehat{\mathcal{S}}_h}=\BS{0}$  and 
$\BS{z}_{\vert\widehat{\mathcal{F}}^-_{h}}=(\nabla \overline{\ug})_{\vert\widehat{\mathcal{F}}^-_{h}}$, which ends the proof.
\end{proof}
\begin{remark}
The previous lemma gives the $H^1$ space regularity of $\overline{\ug}_{|_{\mathcal{F}^-_{h(t)}}}$  (for a.e. $t \in (0,T)$). Since $\mathcal{F}^-_{h(t)}$ a Lipschitz domain, it enables us to define an $H^{1/2}$ trace of $\overline{\ug}$ on ${\partial \mathcal F^-_{h(t)}}$. 
\end{remark}

This concludes the proof that $(\overline{\ug},\eta)$ satisfies  item $i)$ of  Definition
\ref{def.ws}.
{We  also prove at first that the weak cluster point
satisfies the expected energy estimate. Indeed, for  any arbitrary small $\varepsilon >0$, thanks to the strong convergence of $\eta_{\gamma}$ to $\eta,$ we have that $h_{\gamma} > h- \varepsilon$ for $\gamma$ sufficiently small. We may apply then classical weak limit arguments to pass to the limit in the energy estimate satisfied by the $({\ug}_{\gamma},\eta_{\gamma}).$ Consequently for almost every $t$ we have
\[
 \begin{aligned}
&\frac{1}{2}\left(\rho_{f}\int_{\mathcal{F}^{-}_{h(t)-\varepsilon}}\vert\ug(t, \xg) \vert^{2} \dd\xg + \rho_{s}\int_{0}^{L}\vert\partial_{t}\eta(t, x) \vert^{2} \dd x+\beta\int_{0}^{L}\vert\partial_{x}\eta(t, x) \vert^{2} \dd x+\alpha\int_{0}^{L}\vert\partial_{xx}\eta(t, x) \vert^{2} \dd x\right)\\
& + \mu\int_0^t\int_{\mathcal{F}^{-}_{h(s)-\varepsilon}}\vert \nabla\ug(s, \xg) \vert^{2} \dd\xg\dd s  \\
& \leq 
\liminf_{\gamma \to 0} \left[ \frac{1}{2}\left(\rho_{f}\int_{\mathcal{F}^{-}_{h(t)-\varepsilon}}\vert\ug_{\gamma}(t, \xg) \vert^{2} \dd\xg + \rho_{s}\int_{0}^{L}\vert\partial_{t}\eta_{\gamma}(t, x) \vert^{2} \dd x+\beta\int_{0}^{L}\vert\partial_{x}\eta_{\gamma}(t, x) \vert^{2} \dd x \right.\right. \\
&\left. \left. +\alpha\int_{0}^{L}\vert\partial_{xx}\eta_{\gamma}(t, x) \vert^{2} \dd x\right) +  \mu\int_0^t\int_{\mathcal{F}^{-}_{h(s)-\varepsilon}}\vert \nabla\ug_{\gamma}(s, \xg) \vert^{2} \dd\xg\dd s\right] \\
&  
\leq \lim_{\gamma \to 0} 
\frac{1}{2}\left(\rho_{f}\int_{\mathcal{F}_{h^0_{\gamma}}}\vert\ug^0_{\gamma}\vert^{2} \dd \xg+ \rho_{s}\int_{0}^{L}\vert\eta^1_{\gamma}\vert^{2}\dd x+\beta\int_{0}^{L}\vert\partial_{x}\eta^0_{\gamma}\vert^{2}\dd x+\alpha\int_{0}^{L}\vert\partial_{xx}\eta^0_{\gamma}\vert^{2}\dd x\right)
\\
& 
=\frac{1}{2}\left(\rho_{f}\int_{\mathcal{F}_{h^0}}\vert\ug^0\vert^{2} \dd \xg+ \rho_{s}\int_{0}^{L}\vert\eta^1\vert^{2}\dd x+\beta\int_{0}^{L}\vert\partial_{x}\eta^0\vert^{2}\dd x+\alpha\int_{0}^{L}\vert\partial_{xx}\eta^0\vert^{2}\dd x\right).
\end{aligned} 
\]
Since $\varepsilon$ is arbitrary, we obtain the expected energy inequality.
}

\medskip

We  now  show that  any limit $(\overline{\ug},\eta)$ also  satisfies  items $ii)$ and  $iii)$ of Definition
\ref{def.ws}  of weak solutions  under the further assumption that the following lemma holds true:

\begin{lemma}\label{lemma.convergence.L2} Up to the extraction of a subsequence that we do not relabel,  we have $(\rho_\gamma\overline{\ug}_{\gamma},\partial_{t}\eta_{\gamma})\underset{\gamma\rightarrow 0}{\longrightarrow}(\rho\overline{\ug},\partial_{t}\eta)$ strongly in $L^{2}(\CMT)\times L^{2}((0, T)\times (0,L))$. 
\end{lemma}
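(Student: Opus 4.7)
The plan is to follow the projection/approximation strategy of San Martin--Starovoitov--Tucsnak, adapted via the continuity of projections established in Lemma \ref{lem:projector}. The key observation is that a standard Aubin--Lions argument cannot be applied directly because of the $\gamma$-dependence of the fluid domain $\mathcal F_{h_\gamma(t)}$, but compactness can be recovered if we first project onto $X[\underline h]$ for a sufficiently regular lower barrier $\underline h$. More precisely, for every small $\varepsilon>0$ I would construct a $\underline h_\varepsilon \in L^\infty(0,T; H^{1+\kappa}_\sharp \cap W^{1,\infty}_\sharp)$ satisfying $0\le \underline h_\varepsilon \le h_\gamma$ for all $\gamma$ small enough and $\|\underline h_\varepsilon - h\|_{L^\infty(0,T;W^{1,\infty}_\sharp)} \le \varepsilon$. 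This is possible because $h_\gamma \to h$ uniformly in $\mathcal{C}^1_\sharp$; one can for instance regularize $(h-2\varepsilon)_+$ in space--time. The bound $\underline h_\varepsilon \le h_\gamma$ ensures the crucial inclusion $\mathcal{X}[\underline h_\varepsilon(t)] \subset \mathcal{X}[h_\gamma(t)]$, so that test functions adapted to $\underline h_\varepsilon$ are automatically admissible in the weak formulation of $(FS)_\gamma$ for every small $\gamma$.

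Next, I would define the projected sequence $\vg_\gamma^\varepsilon(t) = \mathbb{P}^0[\underline h_\varepsilon(t)](\overline{\ug}_\gamma(t), \partial_t\eta_\gamma(t))$ and prove strong compactness of $\vg_\gamma^\varepsilon$ in $L^2(0,T; X[\underline h_\varepsilon])$ by an Aubin--Lions argument on the $\gamma$-independent space $X^s[\underline h_\varepsilon]$ (for some $0 < s < \kappa/2$). Space regularity comes from Lemma \ref{lem:extension-u}, which provides a uniform bound of $(\overline{\ug}_\gamma, \partial_t\eta_\gamma)$ in $L^2(0,T; X^s)$, hence of the projections. For time regularity, I would test the weak formulation \eqref{weak.formulation.FS} against $(\wg, d) \in \mathcal{X}[\underline h_\varepsilon(t)]$ chosen essentially time--independent: the resulting identity controls $\partial_t \langle \vg_\gamma^\varepsilon, (\wg,d)\rangle_{X[\underline h_\varepsilon]}$ by viscous, elastic and convective contributions, the latter being handled by the uniform $L^4(\widehat{\Omega})$ bound of Lemma \ref{lem:extension-u}. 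A density/diagonal argument combined with the compact embedding $X^s[\underline h_\varepsilon] \hookrightarrow X[\underline h_\varepsilon]$ then yields the desired strong convergence $\vg_\gamma^\varepsilon \to \vg_\infty^\varepsilon$ in $L^2(0,T; X[\underline h_\varepsilon])$.

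To pass from the projections back to the full sequence, I would invoke Lemma \ref{lem:projector} with $h = h_\gamma$ and $\underline h = \underline h_\varepsilon$, which gives for a.e. $t$
\[
\|\vg_\gamma^\varepsilon(t) - (\overline{\ug}_\gamma(t),\partial_t\eta_\gamma(t))\|_{X^0} \le C_A(\|\underline h_\varepsilon - h_\gamma\|_{W^{1,\infty}_\sharp})\, \|\overline{\ug}_\gamma(t)\|_{H^1(\mathcal F^-_{h_\gamma(t)})}.
\]
Integrating in time and using the uniform $L^2(0,T; H^1(\mathcal F^-_{h_\gamma}))$ bound \eqref{bound-u-2} together with the smallness of $\|\underline h_\varepsilon - h_\gamma\|_{W^{1,\infty}_\sharp} \le 2\varepsilon$ for $\gamma$ small, the $L^2$-distance between $\vg_\gamma^\varepsilon$ and $(\overline{\ug}_\gamma,\partial_t\eta_\gamma)$ is $o_\varepsilon(1)$ uniformly in $\gamma$. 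A standard Cauchy-sequence argument combined with a diagonal extraction as $\varepsilon \to 0$ then upgrades the strong convergence of the projections to strong convergence of $(\overline{\ug}_\gamma,\partial_t\eta_\gamma)$ in $L^2(\widehat{\Omega})\times L^2((0,T)\times(0,L))$; identifying the limit with $(\overline{\ug},\partial_t\eta)$ follows from the weak convergences already established. The factor $\rho_\gamma = \mathbf{1}_{\widehat{\mathcal F}_{h_\gamma}}$ is then treated by the uniform convergence of $h_\gamma$.

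The main obstacle will be the time-derivative estimate for the projections: unlike the fluid--solid case, the elastic test functions $(\wg,d) \in \mathcal{X}[\underline h_\varepsilon]$ must be genuinely $\gamma$--independent, and one must verify that all terms in the weak formulation (in particular the convective one, which depends on the moving fluid domain $\mathcal F_{h_\gamma(t)}$) can be bounded by a continuous seminorm of $(\wg,d)$ on $X^s[\underline h_\varepsilon]$ with constants uniform in $\gamma$. The restriction $s < \kappa/2$ coming from Lemma \ref{lem:projector} and the $L^4$ bound of Lemma \ref{lem:extension-u} are precisely what makes this uniform estimate possible.
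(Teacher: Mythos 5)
Your overall scheme coincides with the paper's: construct a lower barrier $\hb$ close to $h_\gamma$ in $W^{1,\infty}_\sharp$, project onto the $\gamma$-independent space $X[\hb]$, obtain compactness of the projections by an Aubin--Lions argument whose time-derivative bound comes from the weak formulation and whose spatial bound comes from Lemma~\ref{lem:extension-u}, and then fill the gap to the full sequence by the projection-continuity estimate of Lemma~\ref{lem:projector}. However, there is a genuine technical gap in the step you outline with $\vg_\gamma^\varepsilon := \mathbb P^0[\hb_\varepsilon](\overline{\ug}_\gamma,\partial_t\eta_\gamma)$. You claim that the uniform $L^2(0,T;X^s)$-bound on $(\overline{\ug}_\gamma,\partial_t\eta_\gamma)$ transfers to $\vg_\gamma^\varepsilon$. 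This is false in general: the $L^2$-orthogonal projection onto the constrained closed subspace $X[\hb_\varepsilon]\subset X^0$ does \emph{not} preserve $H^s$ regularity. (Morally, imposing the additional constraint $\vg\cdot\BS e_1=0$ on the strip $\mathcal C_{\hb_\varepsilon}^{h_\gamma}$ by a pointwise $L^2$ projection can introduce jumps in the tangential component across the graph $y=h_\gamma(x)$.) Consequently the Aubin--Lions argument as you set it up, aiming for compactness in $L^2(0,T;X[\hb_\varepsilon])$ with $X^s[\hb_\varepsilon]$ as the higher-regularity space, does not apply to $\vg_\gamma^\varepsilon$, because $\vg_\gamma^\varepsilon$ need not be bounded in $L^2(0,T;X^s[\hb_\varepsilon])$.

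The paper resolves exactly this tension by working simultaneously with \emph{two} projections: the $L^2$-projection $\mathbb P[\hb_{\delta,k}]$, for which the weak formulation controls the time derivative (yielding compactness, but only in the \emph{dual} space $(X^s[\hb_{\delta,k}])'$), and the $H^s$-projection $\mathbb P^s[\hb_{\delta,k}]$, which trivially preserves the $X^s$-bound and hence converges weakly in $L^2(I_k;X^s)$ but has no time-derivative estimate. These are then combined in the quadratic error term
\[
\int_{I_k}\bigl\langle\,\mathbb P[\hb_{\delta,k}](\rho_\gamma\overline{\ug}_\gamma,\partial_t\eta_\gamma),\ \mathbb P^s[\hb_{\delta,k}](\overline{\ug}_\gamma,\partial_t\eta_\gamma)\,\bigr\rangle_{(X^s)',X^s}
\]
where a strong (dual) times weak (primal) pairing converges. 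Additionally, you should note that the paper chooses $\hb_\delta$ piecewise \emph{constant} in time on intervals $I_k$; a smoothly time-dependent $\hb_\varepsilon$ as you propose would make $\mathbb P^0[\hb_\varepsilon(t)]$ a time-dependent operator and would generate extra commutator terms in the time-derivative estimate, whereas with a piecewise constant barrier the (adapted) Aubin--Lions lemma can be applied separately on each $I_k$ with fixed spaces. Your final passage from $\varepsilon$-approximate strong convergence to a genuine subsequence via a diagonal argument is morally the same as the paper's $\limsup_{\gamma\to 0}|Err_\gamma|\le C_A(\delta)$ followed by $\delta\to 0$, so that part is fine, but it rests on the compactness step that needs to be repaired as indicated.
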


So, fix $(\wg, d)\in\mathcal{C}^{\infty}(\CMT)\times \mathcal{C}^{\infty}(0, L) $ such that $(\wg(t), d(t))\in\mathcal{X}[h(t)]$ for all $t\in(0,T)$. 
Due to the uniform convergence of $h_\gamma$ and the special structure of $\mathcal{X}[h(t)]$ for which we require that  $\wg=0$ in the neighbourhood of $\mathcal{C}^0_{-1}$  and $\wg\cdot\BS{e}_{1}=0$ in the neighbourhood of $\mathcal{S}_h(t)$, there exists $\gamma_{0}>0$ such that, for all $0<\gamma<\gamma_{0}$, $(\wg(t),d(t))\in X[h_{\gamma}(t)]$ for all $t\in(0,T)$. Hence, $(\wg,d)$ is a test function for any $\gamma$ small enough and for a.e. $t\in(0,T)$, $(\overline{\ug}_{\gamma},\eta_{\gamma})$ satisfies
\begin{equation}\label{weak.formulation.FS.gamma.limit}
\begin{aligned}
&\rho_{f}\int_{\CM}\rho_{\gamma}\overline{\ug}_{\gamma}(t)\cdot\wg(t) -\rho_{f}\int_{0}^{t}\int_{\CM}\rho_{\gamma}\overline{\ug}_{\gamma}\cdot\partial_{t}\wg + (\rho_{\gamma}\overline{\ug}_{\gamma}\cdot\nabla)\wg\cdot\rho_{\gamma}\overline{\ug}_{\gamma}\\
&+\rho_{s}\int_{0}^{L}\partial_{t}\eta_{\gamma}(t)d(t)-\rho_{s}\int_{0}^{t}\int_{0}^{L}\partial_{t}\eta_{\gamma}\partial_{t}d+\mu\int_{0}^{t}\int_{\CM}\rho_{\gamma} \nabla\ug_{\gamma}:\nabla\wg\\
&+\int_{0}^{t}\int_{0}^{L}\beta\partial_{x}\eta_{\gamma}\partial_{x}d + \alpha\partial_{xx}\eta_{\gamma}\partial_{xx}d+\gamma\int_{0}^{t}\int_{0}^{L}\partial_{tx}\eta_{\gamma}\partial_{x}d=\rho_{f}\int_{\CM}\rho_{\gamma}\overline{\ug}^{0}_{\gamma}\cdot\wg(0)+\rho_{s}\int_{0}^{L}\eta^{1}_{\gamma}d(0).
\end{aligned}
\end{equation}

Let us recall that, thanks to Lemma \ref{lemma.convergence.L2}, $\rho_\gamma \ug_{\gamma}$ strongly converges in $L^2(\hat\Omega)$. The convergence of $\rho_{\gamma} \nabla\ug_{\gamma}$ is proved in Lemma \ref{lemma.gradient} and we can pass to the limit all the terms of \eqref{weak.formulation.FS.gamma.limit}. The pair $(\overline{\ug},\eta)$ satisfies, for a.e. $t\in(0,T)$,
\[
\begin{aligned}
&\rho_{f}\int_{\CM}\rho\overline{\ug}(t)\cdot\wg(t) -\rho_{f}\int_{0}^{t}\int_{\CM}\rho\overline{\ug}\cdot\partial_{t}\wg + (\rho\overline{\ug}\cdot\nabla)\wg\cdot\rho\overline{\ug}\\
&+\rho_{s}\int_{0}^{L}\partial_{t}\eta(t)d(t)-\rho_{s}\int_{0}^{t}\int_{0}^{L}\partial_{t}\eta\partial_{t}d+\mu\int_{0}^{t}\int_{\CM}\rho \ug:\nabla\wg\\
&+\int_{0}^{t}\int_{0}^{L}\beta\partial_{x}\eta\partial_{x}d + \alpha\partial_{xx}\eta\partial_{xx}d=\rho_{f}\int_{\CM}\rho\overline{\ug}^{0}\cdot\wg(0)+\rho_{s}\int_{0}^{L}\eta^{1}d(0),
\end{aligned}
\]
which is a rewriting of the variational formulation \eqref{weak.formulation.FS}.  Thus the item $iii)$ of Definition \ref{def.ws} is satisfied.
The last point to verify is the kinematic condition at the fluid--structure interface. We know that 
$$\ug_\gamma(t, x, h_\gamma(t, x))= \partial_t \eta_\gamma(t, x) \BS{e}_2.$$
From Lemma \ref{lemma.convergence.L2}   the right hand side converges strongly in $L^2(0, T; L_\sharp^2(0, L))$ towards $\partial_t \eta$. It converges also weakly in $L^2(0, T; H_\sharp^{1/2}(0, L))$.  The left hand side is the trace of the function $(x, z) \mapsto \ug_\gamma(t, x, (1+h_\gamma(t, x))z + h_\gamma(t, x))$ on $z=1$. Thanks to the previous convergences this function converges strongly in $L^2(\widehat{{\mathcal C}_0^1})$ and weakly in $L^2(0, T; H^1_\sharp({\mathcal C}_0^1))$ towards  $\ug(t, x, (1+h(t, x))z + h(t, x))$. Hence by continuity of the trace we obtain
$$\ug(t, x, h(t, x))= \partial_t \eta(t, x) \BS{e}_2,$$
so that item $ii)$ of Definition \ref{def.ws}, which  completes the proof of Theorem \ref{main.theorem}.

\medskip

It  thus remains  to prove Lemma \ref{lemma.convergence.L2}. As is usual for fluid--structure problems, the sequence of domains is unknown and depends on time and here on the viscosity parameter, so that standard Aubin--Lions lemma cannot be applied directly to obtain compactness of the velocities. One key point is to build a piecewise in time, regular enough in space, interface, dealing with possible contact, close to the sequence of interfaces but always lower. The construction of this artificial interface is the aim of the next subsection. We then conclude  the proof in the final subsection.  Thanks to the variational formulation and to this well chosen interface ``from below" we obtain bounds on the time derivative of an $L^2$ projection of the velocities, for which we are able to apply  an adapted version of the Aubin--Lions lemma. It implies that the sequence of velocities is nearly compact. Next the key idea is to use that the velocities can be approximated, in $H^s$ for some $s>0$, by velocities associated to the interface ``from below"  so that we can ``fill" the gap. This relies on  the continuity properties of the $H^s$--projector operator obtained in Lemma \ref{lem:projector}.

\subsection{Step 2. Construction of an interface from below.}
Before the construction of the interface from below, we analyze a simple method to approximate a given stationary deformation  from below. Namely, given $h \in H^2_{\sharp}(0,L)$ satisfying $h \geq 0$ and $\mu >0,$ we denote
\[
\hb_{\mu} := [h-\mu]_{+}.
\]
where the subscript $+$ denotes here the positive part of functions. The properties of this approximation process
are summarized in the following lemma:
\begin{lemma}\label{lemma.halpha}
Let $h \in H^{2}_{\sharp}(0,L)$ with $h \geq 0$ and $\mu >0.$ 
Then $\hb_{\mu}\in \mathcal{C}_\sharp^{0}(0,L)$ and, given $\kappa \in (0,1/2),$ there exists a constant $C$
independent of $\mu$ and $h$  for which:
\begin{align}
& \norme{\hb_{\mu}}_{H_\sharp^{1+\kappa}(0,L)} +  \norme{\hb_{\mu}}_{W^{1,\infty}_\sharp(0,L)}\leq C\norme{h}_{H_\sharp^{2}(0,L)}, \label{h1}\\
& \norme{\hb_{\mu}-h}_{W_\sharp^{1, \infty}(0,L)}\leq {\mu}+\displaystyle\sup_{\{x\in[0, L]\mid h(x)\leq \mu\}}\vert  h'(x)\vert.\label{h2}
\end{align}
 \end{lemma}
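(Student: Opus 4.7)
My plan: first, invoke the one--dimensional Sobolev embedding $H^{2}_{\sharp}(0,L)\hookrightarrow \mathcal{C}^{1}_{\sharp}(0,L)\cap W^{1,\infty}_{\sharp}(0,L)$. Then $\hb_{\mu}=(h-\mu)_{+}$ is continuous with weak derivative $\hb_{\mu}'=h'\,\mathbf{1}_{E_{\mu}}$ almost everywhere, where $E_{\mu}:=\{x\in(0,L):h(x)>\mu\}$. This already delivers the $W^{1,\infty}_{\sharp}$ part of \eqref{h1}, since both $\hb_{\mu}$ and $\hb_{\mu}'$ are pointwise dominated by $h$ and $h'$ respectively. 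Estimate \eqref{h2} follows in the same stroke: $\hb_{\mu}-h=-\min(h,\mu)\in[-\mu,0]$ so $\|\hb_{\mu}-h\|_{L^{\infty}}\leq \mu$, and its weak derivative $-h'\mathbf{1}_{\{h\leq\mu\}}$ has $L^{\infty}$-norm exactly $\sup_{\{h\leq\mu\}}|h'|$.

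The core of the proof is the $H^{1+\kappa}_{\sharp}$-bound. My strategy is to upgrade the $L^{\infty}$-bound on $\hb_{\mu}'$ to a $BV$-bound and then rely on the classical embedding $BV_{\sharp}(0,L)\cap L^{\infty}_{\sharp}(0,L)\hookrightarrow H^{\kappa}_{\sharp}(0,L)$ for $\kappa<1/2$. The latter is obtained by interpolating the translation estimates $\|f(\cdot+\tau)-f\|_{L^{1}}\leq |\tau|\,|Df|(0,L)$ and $\|f(\cdot+\tau)-f\|_{L^{\infty}}\leq 2\|f\|_{L^{\infty}}$ via $\|g\|_{L^{2}}^{2}\leq \|g\|_{L^{1}}\|g\|_{L^{\infty}}$, placing $f$ into $B^{1/2}_{2,\infty}\hookrightarrow H^{\kappa}$. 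Specifically, I aim for the quantitative bound
\[
\|\hb_{\mu}'\|_{BV_{\sharp}(0,L)}\leq 2\|h''\|_{L^{1}_{\sharp}(0,L)}\leq 2\sqrt{L}\,\|h\|_{H^{2}_{\sharp}(0,L)},
\]
which combined with the embedding and the trivial bound $\|\hb_{\mu}\|_{L^{2}}\leq \|h\|_{L^{2}}$ gives the announced estimate, with a constant $C$ independent of $\mu$ and $h$.

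For the $BV$-bound, I would decompose the open set $E_{\mu}$ into its (at most countably many) connected components $\bigsqcup_{i}(a_{i},b_{i})$, with periodic identifications if needed. On each component $\hb_{\mu}'=h'$ is absolutely continuous and contributes $\int_{a_{i}}^{b_{i}}|h''|$ to the variation, while at the endpoints $\hb_{\mu}'$ has jumps of sizes $|h'(a_{i})|$ and $|h'(b_{i})|$. The key structural observation is that since $h\geq \mu$ on $(a_{i},b_{i})$ with equality at both endpoints, $h$ attains an interior maximum at some $c_{i}\in(a_{i},b_{i})$ with $h'(c_{i})=0$, so the fundamental theorem of calculus gives
\[
|h'(a_{i})|+|h'(b_{i})|\leq \int_{a_{i}}^{c_{i}}|h''|+\int_{c_{i}}^{b_{i}}|h''|=\int_{a_{i}}^{b_{i}}|h''|.
\]
Summing the continuous and jump parts over all components, which are pairwise disjoint, yields the claimed $BV$-estimate.

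The main obstacle I anticipate is essentially bookkeeping: handling the possibly countable family of components together with the periodic boundary identification. Since the intervals remain disjoint, their contributions aggregate to integrals over $(0,L)$, so no new analytical idea is needed beyond the one above; the crucial quantitative input that makes the whole scheme work is the threshold $\kappa<1/2$, which is precisely the regularity at which a bounded variation function enters $H^{\kappa}$.
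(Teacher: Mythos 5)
Your proof is correct and complete, and it takes a genuinely different route to estimate \eqref{h1} in $H^{1+\kappa}_\sharp$. Both you and the paper start from the same decomposition of the open set $E_\mu=\{h>\mu\}$ into components $(a_i,b_i)$ and both exploit the same structural observation — that since $h(a_i)=h(b_i)=\mu$ there is an interior critical point $c_i$ with $h'(c_i)=0$, so the endpoint slopes are controlled by $h''$ on the component. But from there the proofs diverge. The paper goes \emph{down} the scale: it writes $\hb_\mu''=h''\mathds{1}_{E_\mu}+\sum_i(\delta_{a_i}h'(a_i)-\delta_{b_i}h'(b_i))$ and estimates this distribution directly in $H^{\kappa-1}_\sharp$, using a uniform bound on $\|\delta_{c}\|_{H^{\kappa-1}_\sharp}$ (via the embedding $H^{1-\kappa}_\sharp\subset\mathcal C^0_\sharp$) combined with a Cauchy--Schwarz-type mean-value estimate $|h'(a_i)|\leq|b_i-a_i|^{1/2}\|h\|_{H^2(a_i,b_i)}$ to get normal convergence of the series. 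You instead go \emph{up} the scale: you show $\hb_\mu'\in BV_\sharp\cap L^\infty_\sharp$ with $|D\hb_\mu'|(0,L)\leq 2\|h''\|_{L^1}$, using the sharper $L^1$ mean-value estimate $|h'(a_i)|+|h'(b_i)|\leq\int_{a_i}^{b_i}|h''|$, and then invoke $BV_\sharp\cap L^\infty_\sharp\hookrightarrow B^{1/2}_{2,\infty,\sharp}\hookrightarrow H^\kappa_\sharp$ for $\kappa<1/2$, obtained by interpolating the $L^1$ and $L^\infty$ translation estimates via $\|g\|_{L^2}^2\leq\|g\|_{L^1}\|g\|_{L^\infty}$. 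Your route avoids any Sobolev duality with Dirac masses and produces explicit constants, at the cost of requiring the Nikolskii/Besov interpolation embedding; the paper's route is perhaps more elementary in that it stays inside the $H^s$ scale but needs the slightly finer bookkeeping of normal convergence in a negative-order space. Both give the same threshold $\kappa<1/2$, which is genuinely sharp: the Dirac masses in $\hb_\mu''$ preclude $\hb_\mu'\in H^{1/2}_\sharp$.
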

\begin{proof}Let  $\mu>0$ and $h \in H^{2}_{\sharp}(0,L)$ non-negative. The first statement $\hb_{\mu}\in \mathcal{C}_\sharp^{0}(0,L)$ is standard. We prove the two inequalities \eqref{h1}, \eqref{h2} successively.

\medskip

{\bf Step 1: proof of inequality \eqref{h1}.} Since $h \in {\mathcal C}^0_{\sharp}(0,L)$ the subset $\{x\in(0, L), h(x)>\mu\}$ is open. 
We may then construct at most denumerable sets $\{c_i, \; i \in \mathcal I_{\mu}\}$ and $\{ d_i, \; i \in \mathcal I_{\mu}\}$ such that $\{x\in(0, L), h(x)>\mu\}= \bigcup_{i\in\mathcal I_{\mu}}(c_{i},d_{i}).$ The successive derivatives of $[h-\mu]_{+}$ then read
\[
[h-\mu]'_{+}= h'\mathds{1}_{h>\mu}, \quad  [h-\mu]{''}_{+}= h{''}\mathds{1}_{h>\mu}+\sum_{i\in \mathcal I_{\mu}}\delta_{c_{i} }h{'}(c_{i})- \delta_{d_{i}}h'(d_{i}),
\]
To show that $\hb_{\mu} \in H^{1+\kappa}_{\sharp}(0,L)$ and have a bound on its norm, we now prove  that
$[h- {\mu}]{''}_{+} \in H^{\kappa-1}_{\sharp}(0,L).$ 

\medskip

Using the $H^{2}$-regularity of $h$ we obtain that $h{''}\mathds{1}_{h>\mu} \in L_\sharp^{2}(0,L) \subset H^{\kappa-1}_{\sharp}(0,L)$. Moreover  for any test function $\varphi \in \mathcal{D}(0, L)$, we have:
\[
\vert\langle \delta_{c_{i}},\varphi\rangle\vert = \vert \varphi(c_{i})\vert\leq \norme{\varphi}_{\mathcal{C}_\sharp(0,L)}\leq C\norme{\varphi}_{H_\sharp^{1-\kappa}(0,L)},
\]
where $C>0$ stands for the constant associated with the Sobolev embedding $H_\sharp^{1-\kappa}(0,L) \subset \mathcal{C}^0_\sharp(0,L)$. Hence $\delta_{c_{i}} \in H^{\kappa-1}(0,L)$, with a norm independent of $c_{i}$. To show that $[h- {\mu}]{''}_{+} \in H^{\kappa-1}_{\sharp}(0,L)$ it   then remains to prove that the sums 
\[
 \sum_{i\in \mathcal I_{\mu}}\delta_{c_{i} }h{'}(c_{i}), \quad \sum_{i\in \mathcal I_{\mu}}\delta_{d_{i} }h{'}(d_{i}),
\]
do converge normally in the Banach space $H_\sharp^{\kappa-1}(0,L).$

\medskip

Let $i \in \mathcal I_{\mu},$ since $h(c_{i})=h(d_{i})=\mu,$  there exists $b_{i}\in(c_{i},d_{i})$ such that $h'(b_{i})=0$. This implies
\[\vert h'(c_{i})\vert\leq \vert b_{i}-c_{i}\vert^{1/2}\norme{h}_{H^{2}(c_{i},d_{i})} \leq \vert d_{i}-c_{i}\vert^{1/2}\norme{h}_{H^{2}(c_{i},d_{i})},\]
and thus
\begin{equation}\label{regularity.second.derivative}
\begin{aligned}
\sum_{i \in \mathcal I_{\mu}} \norme{h'(c_{i})\delta_{c_{i}}}_{H_\sharp^{\kappa-1}(0,L)}&{}\leq C\sum_{i \in \mathcal I_{\mu}}\vert d_{i}-c_{i}\vert^{1/2}\norme{h}_{H^{2}(c_{i},d_{i})}\\
&\leq C\left(\sum_{i \in \mathcal I_{\mu}} \vert d_{i}-c_{i}\vert\right)^{1/2}\left(\sum_{i \in \mathcal I_{\mu}}^{N}\norme{h}_{H^{2}(c_{i},d_{i})}^{2}\right)^{1/2}\\
&\leq CL^{1/2}\norme{h}_{H_\sharp^{2}(0,L)}.
\end{aligned}
\end{equation}
Consequently,  we have
\[
 \sum_{i\in \mathcal I_{\mu}}\delta_{c_{i} }h{'}(c_{i}) \in H^{\kappa -1}_{\sharp}(0,L)  \textrm{ and } \|\sum_{i\in \mathcal I_{\mu}}\delta_{c_{i} }h{'}(c_{i})\|_{H^{\kappa -1}_{\sharp}(0,L) } \leq  CL^{1/2}\norme{h}_{H_\sharp^{2}(0,L)}.
\]
The argument is similar for $\displaystyle\sum_{i\in \mathcal I_{\mu}}\delta_{d_{i}}h'(d_{i})$. This completes the proof of estimate
\eqref{h1}.

\medskip

{\bf Step 2:  proof of estimate \eqref{h2}.} Since $h\in H_\sharp^2(0, L)$,  thanks to the continuous embedding of  $H_\sharp^1(0, L)$ in  $L_\sharp^\infty(0, L),$ we deduce that $[h-\mu]_{+}\in W_\sharp^{1,\infty}(0, L)$.
Furthermore it is clear that, for any $ x\in [0, L]$,
$$\vert [h-\mu]_{+}(x)-h(x)\vert \leq \mu, \qquad [h-\mu]'_{+}(x)-h'(x) =  -h'(x) \mathds{1}_{h\leq\mu}(x).$$
This implies that \eqref{h2} is satisfied.
\end{proof}

The next lemma ensures that the right-hand side of estimate \eqref{h2}  goes to zero when $\mu$ goes to zero:
\begin{lemma}\label{lemma.conv.zero}Let $h \in \mathcal{C}_\sharp^{1}(0,L)$ with $h\geq 0$. The following limit holds
\[\sup_{\{x\in [0, L]\mid h(x)\leq\mu\}}\vert h'(x)\vert \underset{\mu\rightarrow 0}{\longrightarrow} 0.
\]
\end{lemma}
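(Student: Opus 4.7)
The plan is to argue by contradiction using compactness of $[0,L]$ and continuity of $h$ and $h'$. The key observation is that if $h(x^*)=0$ for some $x^*$, then $x^*$ is a global minimum of $h$ (since $h\geq 0$), hence a local minimum; because $h$ is periodic, we may treat it as a function on the circle and so there is no boundary issue, and Fermat's theorem gives $h'(x^*)=0$. Thus $h'$ vanishes on the zero set $Z=\{x\in[0,L]\mid h(x)=0\}$.

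First I would suppose, for contradiction, that the conclusion fails. Then there exist $\varepsilon>0$, a sequence $\mu_n\to 0$, and points $x_n\in[0,L]$ such that $h(x_n)\leq \mu_n$ and $|h'(x_n)|\geq \varepsilon$. By compactness of $[0,L]$, one may extract a subsequence (not relabeled) satisfying $x_n\to x^*$ for some $x^*\in[0,L]$. By continuity of $h$, one has $h(x^*)=\lim_n h(x_n)\leq \lim_n \mu_n = 0$, whence $h(x^*)=0$.

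The main (and only non-routine) point is the identity $h'(x^*)=0$ at such a zero $x^*$, which follows from the periodicity hypothesis as explained above: if $x^*\in(0,L)$ this is immediate from Fermat's theorem applied to the local minimum $x^*$, while if $x^*\in\{0,L\}$ the $L$-periodic extension of $h$ still admits $x^*$ as an interior local minimum on $\mathbb{R}$, so again $h'(x^*)=0$. Then by continuity of $h'$ one concludes $h'(x_n)\to h'(x^*)=0$, contradicting $|h'(x_n)|\geq \varepsilon$. This completes the plan; I do not expect any serious obstacle, as the argument is a standard compactness/continuity exercise, with the mild subtlety being the use of $L$-periodicity to handle potential boundary points of $[0,L]$.
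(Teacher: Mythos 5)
Your proposal is correct and takes essentially the same approach as the paper: compactness of $[0,L]$, continuity of $h$ and $h'$, and the observation that a zero of $h \geq 0$ is a local minimum, hence a critical point. The paper argues directly with a maximizing sequence $x_\mu$, whereas you argue by contradiction; your treatment of the endpoint case via $L$-periodicity makes explicit a point the paper leaves implicit, and the contradiction framing also sidesteps the need to justify passing from a subsequence limit to the full limit (which in the paper's direct argument is handled by the monotonicity of the sets $\{h \leq \mu\}$).
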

\begin{proof}Since $ h \in \mathcal{C}^1_\sharp(0,L)$ we have that $\{ x \in [0,L]\mid h(x)\leq\mu\}$ is a compact subset  of $[0,L]$ and that there exists $x_{\mu}\in [0,L]$ such that:
\[
\sup_{\{x\in [0, L]\mid h(x)\leq\mu\}} |h'(x)|= |h'(x_{\mu})|.
\] 
Note that, by construction, we have $h(x_{\mu})\leq \mu$.

\medskip

Using the compactness of $[0,L]$ we have $x_{\mu}\rightarrow \bar x\in [0,L]$ as $\mu$ goes to zero (up to  a subsequence). Using the continuity of $h$ and passing to the limit in the inequality $h(x_{\mu})\leq \mu$ we obtain $h(\bar x)=0$. Moreover, using that $h(x)\geq 0$ we deduce that $\bar x$ is a local minimum of $h$ and thus that $h'(\bar x)=0$. Finally the continuity of $h'$ ensures that $|h'(x_{\mu})|\underset{\mu\rightarrow 0}{\longrightarrow}|h'(\bar x)|=0$.
\end{proof}

We recall that the sequence $(h_{\gamma})_{\gamma >0}$ we consider
 converges to $h$ strongly in $\mathcal{C}^{0}([0,T];\mathcal{C}_\sharp^{1}(0,L))$ with $h\in L^{\infty}(0,T;H_\sharp^{2}(0,L)) \cap W^{1,\infty}(0, T; L^2_\sharp(0, L))$.
We  now are in a position to build a family of approximating interfaces ``from below" of any $h_\gamma$ for $\gamma$ small enough. Namely,  given $\delta >0$ we construct a piecewise-constant (in time) function $\underline{h}_{\delta}$ 
such that there exists $\gamma_0 >0$ for which 
\begin{align}
& \|\hb_{\delta}\|_{L^{\infty}(0,T;W^{1,\infty}_{\sharp}(0,L))} \leq C, \label{estimate.hb}\\
&\hb_{\delta}\leq h_{\gamma},  && \,\forall \gamma\leq\gamma_{0}, \label{eq.interfacebelow}\\
&\| \hb_{\delta}- h_\gamma\|_{L^\infty(0,T; W_\sharp^{1, \infty}(0 , L))}\leq \delta,  && \,\forall \gamma\leq\gamma_{0}. \label{estimation-diffth}
\end{align}
with $C$ depending only on initial data.

So, let fix $\delta >0$ and introduce parameters $\varepsilon >0,N\in \mathbb N$ to be made precise later on. 
We construct our piecewise approximation as follows.  We consider the subdivision of the time interval $[0,T]=\cup_{0\leq k\leq N}I_{k}$ with $I_{k}=[k\Delta t,(k+1)\Delta t)$, $\Delta t=\frac{T}{N+1}$ and we fix $t_{k}\in I_{k}$ such that $\norme{h(t_{k})}_{H^{2}_{\sharp}(0,L)}\leq \norme{h}_{L^{\infty}(0,T;H^{2}_{\sharp}(0,L))}$. On each time interval $I_k$ we then define
\begin{equation}
\label{def-sous-h}
\hb_{\delta}(x, t)= [h-2\varepsilon]_+(x, t_k), \quad t\in I_k.
\end{equation}

\medskip

Estimate \eqref{estimate.hb} follows directly from Lemma \ref{lemma.halpha} and estimate \eqref{est-eta}. 
Now, up to a good choice for the parameters $\varepsilon$ and $N,$ $\hb_{\delta}$ satisfies the two
properties \eqref{eq.interfacebelow}  and \eqref{estimation-diffth} for $\gamma$ small enough. %
%
%
%
%
First, let us prove that $\hb_{\delta}\leq h_{\gamma}$ for all $\gamma$ sufficiently small and $N$ sufficiently large (depending on $\varepsilon$). We recall that by interpolation, we have $h \in \mathcal{C}^{0,\theta}([0,T];\mathcal{C}^{1}_{\sharp}(0,L))$ for $\theta\in(0,\frac{1}{4})$ (see embedding \eqref{inclusion-eta}). Consequently, for any $k \leq N,$ we have
\[
\|h(x,t) - h(x,t_{k})\|_{L^{\infty}_{\sharp}(0,L)} \leq C \Delta t^{\theta}, \quad \forall \, t \in I_k.
\]
Similarly, since $h_{\gamma}$ converges to $h$ in $\mathcal{C}^{0}([0,T];\mathcal{C}_\sharp^{1}(0,L))$ we can find $\gamma_0 >0$ such that, for $\gamma \leq \gamma_0,$
\[
\|h(x,t) - h_{\gamma}(x,t)\|_{W^{1,\infty}_{\sharp}(0,L)} \leq  \varepsilon,  \quad \forall \, t \in (0,T).
\]
Assuming that $N$ is chosen such that (with the  above constant $C$)
\[
C \Delta t^{\theta}=C \left(\dfrac T{N+1} \right)^{\theta} < \varepsilon,
\]
we have, for any $k \leq N$,
\[h_{\gamma}(x,t)\geq h(x,t)-\varepsilon\geq h(x,t_{k})-2\varepsilon,\, \qquad \forall(x,t)\in(0,L)\times I_{k},\,\forall \gamma\leq \gamma_{0}.\]
Taking the positive part in the previous inequality (recall that $h_{\gamma}\geq 0$) we obtain \eqref{eq.interfacebelow}. 

\medskip

We now estimate the difference between $\hb_{\delta}$ and $h_{\gamma}$ for $\gamma \leq \gamma_0.$
Given $k \leq N$ we have, for all $\gamma \leq  \gamma_0$
\[\begin{aligned}
\| \hb_{\delta}- h_\gamma\|_{L^\infty(I_k; W_\sharp^{1, \infty}(0 , L))}&\leq \| \hb_{\delta}- h\|_{L^\infty(I_k; W_\sharp^{1, \infty}(0, L))}
+ \| h- h_\gamma\|_{L^\infty(I_k; W_\sharp^{1, \infty}(0, L))},\\
&\leq \| [h-2\varepsilon]_+(t_k)- h(t_k)\|_{W_\sharp^{1, \infty}(0, L)}
+\| h({t_k})- h\|_{L^\infty(I_k; W_\sharp^{1, \infty}(0, L))}
+ \varepsilon,\\
&\leq \| [h-2\varepsilon]_+(t_k)- h(t_k)\|_{W_\sharp^{1, \infty}(0, L)}
+2\varepsilon.
\end{aligned}\]
Applying Lemma \ref{lemma.halpha}, this entails
$$
\| [h-2\varepsilon]_+(t_k)- h(t_k)\|_{W_\sharp^{1, \infty}(0, L)}\leq 2\varepsilon +\sup_{\{x\in [0, L], h(x,t_k)\leq 2\varepsilon\}}\vert\partial_x h(x,t_k)\vert.
$$
Finally we obtain
\[
\| \hb_{\delta}- h_\gamma\|_{L^\infty((0,T); W_\sharp^{1, \infty}(0 , L))}\leq 4 \varepsilon + \sup_{\{x\in [0, L], h(x, t_k)\leq 2\varepsilon\}}\vert\partial_x h(x, t_k)\vert . 
\]
Consequently, applying Lemma  \ref{lemma.conv.zero} and choosing $\varepsilon>0$ sufficiently small with the corresponding $N \in\Natural$ and $\gamma_{0}>0$ we obtain that the interface $\hb_{\delta}$ satisfies  \eqref{estimation-diffth}.

%

\subsection{Step 3. $L^2$-compactness of the velocities.}

In this section we study the $L^{2}$-convergence of the pair $(\rho_{\gamma}\overline{\ug}_{\gamma},\partial_{t}\eta_{\gamma})$ stated in Lemma \ref{lemma.convergence.L2}. 
We know that, up to a subsequence that we do not relabel,   $\rho_{\gamma}\overline{\ug}_{\gamma}\rightharpoonup\rho\overline{\ug}$ weakly in $L^{2}(0,T;\BS{L}_\sharp^{2}(\CM))$, and $\partial_t\eta_\gamma \rightharpoonup \partial_{t}\eta$ in $L^2(0,T;L^2_{\sharp}(0,L))$. To prove the strong convergence of the sequence $(\rho_{\gamma}\overline{\ug}_{\gamma},\partial_{t}\eta_{\gamma})$ to $(\rho\overline{\ug},\partial_{t}\eta)$ it remains to show that the following convergence holds true:
\begin{equation}\label{convergence.int}
\rho_{f}\int_{0}^{T}\int_{\CM}\vert \rho_{\gamma}\overline{\ug}_{\gamma}\vert^{2}+\rho_{s}\int_{0}^{T}\int_{0}^{L}\vert\partial_{t}\eta_{\gamma}\vert^{2}\underset{\gamma\rightarrow 0}{\longrightarrow}\rho_{f}\int_{0}^{T}\int_{\CM}\vert \rho\overline{\ug}\vert^{2}+\rho_{s}\int_{0}^{T}\int_{0}^{L}\vert\partial_{t}\eta\vert^{2}.
\end{equation}
We recall that we endow $X^0 := \mathbf{L}^2(\Omega) \times L^2_{\sharp}(0,L)$ with the scalar product:
\[
 ((\overline{\vg},\dot{\eta}), (\overline{\wg},d))_{X^0} = \rho_f \int_{\Omega}  \overline{\vg} \cdot \overline{\wg} + \rho_s \int_{0}^L \dot{\eta} d.
\] 
In particular, with these notations, the right hand side of \eqref{convergence.int}   also reads:
\[
\int_0^T ((\rho_{\gamma}\overline{\ug},\partial_t \eta_{\gamma}), (\overline{\ug},\partial_t \eta_{\gamma}))_{X^0}.
\]
By restriction, this bilinear form enables to consider any element of $X^0$ as an element of $(X^s)'$ via the formula
\begin{equation} \label{eq:embedX0}
\langle (\overline{\vg},\dot{\eta}), (\overline{\wg},d) \rangle_{(X^s)',X^s } = ((\overline{\vg},\dot{\eta}), (\overline{\wg},d))_{X^0}
\quad \forall \, ((\overline{\vg},\dot{\eta}),(\overline{\wg},d)) \in X^0 \times X^s.
\end{equation}
In what follows we use this identification without mentioning it.

To obtain \eqref{convergence.int}, we show actually that, up to extract again a denumerable times subsequences, we can prove that the error terms
\[
Err_{\gamma} :=   \rho_{f}\int_{0}^{T}\int_{\CM}\vert \rho_{\gamma}\overline{\ug}_{\gamma}\vert^{2}+\rho_{s}\int_{0}^{T}\int_{0}^{L}\vert\partial_{t}\eta_{\gamma}\vert^{2} - \left( \rho_{f}\int_{0}^{T}\int_{\CM}\vert \rho\overline{\ug}\vert^{2}+\rho_{s}\int_{0}^{T}\int_{0}^{L}\vert\partial_{t}\eta\vert^{2} \right) ,
\]
satisfies $\limsup_{\gamma \to 0} |Err_{\gamma}| \leq \tilde\varepsilon$ for any arbitrary small $\tilde\varepsilon.$ We shall compute $\varepsilon$ with respect to the parameter $\delta >0$ fixing the interface from below $\hb_{\delta}$ satisfying 
\eqref{eq.interfacebelow}--\eqref{estimation-diffth} as in the previous subsection. So let fix such a $\delta >0.$ 
We recall that the related interface $\hb_{\delta}$
is constant on a family of intervals $(I_k)_{0 \leq k \leq N}$ covering $[0,T].$ Below, we denote $\hb_{\delta,k}$ the value
of $\hb_{\delta}$ on  $I_k.$ We then split  the time integral and introduce the projector $\mathbb P^s[\hb_{\delta,k}]$
for a given $s < 1/2.$  This yields:
\begin{align*}
Err_{\gamma} & = \sum_{k=0}^N \int_{I_{k}} ( (\rho_{\gamma} \overline{\ug}_{\gamma},\partial_t \eta_{\gamma}),( \overline{\ug}_{\gamma},\partial_t \eta_{\gamma}))_{X^0} -  ( (\rho \overline{\ug},\partial_t \eta),( \overline{\ug},\partial_t \eta))_{X^0} \\
                      & =  \sum_{k=0}^N \int_{I_{k}} ( (\rho_{\gamma} \overline{\ug}_{\gamma},\partial_t\eta_{\gamma}),( \overline{\ug}_{\gamma},\partial_t \eta_{\gamma})-  \mathbb{P}^{s}[\hb_{\delta, k}](\overline{\ug}_{\gamma},\partial_{t}\eta_{\gamma}))_{X^0} \\
                      & \qquad + \int_{I_{k}} (  \mathbb{P}^{s}[\hb_{\delta, k}](\overline{\ug}_{\gamma},\partial_{t}\eta_{\gamma}),(\rho_{\gamma} \overline{\ug}_{\gamma},\partial_t \eta_{\gamma}))_{X^0}  \\
                      & \qquad - \int_{I_{k}} ( (\rho \overline{\ug} ,\partial_t \eta),( \overline{\ug},\partial_t \eta)- \mathbb{P}^{s}[\hb_{\delta, k}](\overline{\ug},\partial_{t}\eta),\partial_t \eta))_{X^0} \\
                      & \qquad - \int_{I_{k}} ( \mathbb{P}^{s}[\hb_{\delta, k}](\overline{\ug},\partial_{t}\eta),(\rho \overline{\ug},\partial_t \eta))_{X^0} .
\end{align*}
Then, since $\mathbb P^s[\hb_{\delta,k}](\overline{\ug}_\gamma,\partial_t \eta_\gamma) \in X^s[\hb_{\delta,k}]\subset X[\hb_{\delta,k}] $ and  thanks the identification \eqref{eq:embedX0}, we write
\begin{align*}
(  \mathbb{P}^{s}[\hb_{\delta, k}](\overline{\ug}_{\gamma},\partial_{t}\eta_{\gamma}),(\rho_{\gamma} \overline{\ug}_{\gamma},\partial_t \eta_{\gamma}))_{X^0}  
& =  (  \mathbb{P}^{s}[\hb_{\delta, k}](\overline{\ug}_{\gamma},\partial_{t}\eta_{\gamma}),\mathbb P[\hb_{\delta,k}](\rho_{\gamma} \overline{\ug}_{\gamma},\partial_t \eta_{\gamma}))_{X^0}  \\
& = \langle \mathbb P[\hb_{\delta,k}](\rho_{\gamma} \overline{\ug}_{\gamma},\partial_t \eta_{\gamma}) ,  \mathbb{P}^{s}[\hb_{\delta, k}](\overline{\ug}_{\gamma},\partial_{t}\eta_{\gamma}) \rangle_{(X^s)',X^s}.
\end{align*}
Proceeding similarly with the limit term, we obtain the following splitting
\[
Err_{\gamma} = \sum_{k=0}^N Err^{app}_{\gamma,k} + Err^{conv}_{\gamma,k} - Err^{app}_{k},
\]
where, for arbitrary $k \leq N,$ we denote
\begin{align*}
Err^{app}_{\gamma,k}  & = \int_{I_{k}} \Bigl( (\rho_{\gamma} \overline{\ug}_{\gamma},\partial_t \eta_{\gamma}),( \overline{\ug}_{\gamma},\partial_t \eta_{\gamma})-  \mathbb{P}^{s}[\hb_{\delta, k}](\overline{\ug}_{\gamma},\partial_{t}\eta_{\gamma})\Bigr)_{X^0} \\
Err^{app}_{k} &=  \int_{I_{k}} \Bigl( (\rho \overline{\ug} ,\partial_t \eta),( \overline{\ug},\partial_t \eta)- \mathbb{P}^{s}[\hb_{\delta, k}](\overline{\ug},\partial_{t}\eta),\partial_t \eta)\Bigr)_{X^0}\\  
Err^{conv}_{\gamma,k} &= \int_{I_k} \langle \mathbb P[\hb_{\delta,k}](\rho_{\gamma} \overline{\ug}_{\gamma},\partial_t \eta_{\gamma}) ,  \mathbb{P}^{s}[\hb_{\delta, k}](\overline{\ug}_{\gamma},\partial_{t}\eta_{\gamma}) \rangle_{(X^s)',X^s}
 - \langle \mathbb P[\hb_{\delta,k}](\rho \overline{\ug} ,\partial_t \eta) ,  \mathbb{P}^{s}[\hb_{\delta, k}](\overline{\ug},\partial_{t}\eta) \rangle_{(X^s)',X^s}.
\end{align*}

\medskip

For the two first type of terms we use the fact that projection on $X^s[h_{\delta, k}]$ has good approximation properties. So, to estimate the error terms $Err^{app}_{\gamma,k}$, we use Lemma \ref{lem:projector}   for $\kappa=1/4$. Indeed, from the bound \eqref{bound-u-2} and Lemma \eqref{lem:extension-u}, we know that $(\overline{\ug}_{\gamma},\partial_t \eta_\gamma)$  satisfies, for a.e. $t \in I_k$,  $(\overline{\ug}_{\gamma}(t),\partial_t \eta_{\gamma}(t)) \in X^s[h_{\gamma}],$ for $s<1/2$, with $\overline{\ug}_{\gamma} \in H_\sharp^1(\mathcal F^-_{h_{\gamma}}).$  Moreover we remark that both interfaces  $h_{\gamma}$ and $h_\delta$ belong  to $H^{1+\kappa}_{\sharp}(0,L) \cap W_\sharp^{1,\infty}(0,L)$ and that, thanks to the definition of $h_\delta$, there exists   $A >0$ independent of $\gamma$ and $\delta$,  such that
\begin{align*}
&\norme{h_{\gamma}}_{L^{\infty}(0,T;H^{1+\kappa}_{\sharp}(0,L))}+\norme{h_{\gamma}}_{L^{\infty}(0,T;W^{1,\infty}_{\sharp}(0,L))}+\norme{\hb_{\delta}}_{L^{\infty}(0,T;H^{1+\kappa}_{\sharp}(0,L))}+\norme{\hb_{\delta}}_{L^{\infty}(0,T;W^{1,\infty}_{\sharp}(0,L))}\leq A.
\end{align*}
Finally by construction $h_\delta$ and $h_\gamma$ are close in $W^{1, \infty}_\sharp(0, L)$ and \eqref{estimation-diffth} is satisfied.
Hence,
Lemma \ref{lem:projector} implies, for $s<1/8$
\begin{align*}
\norme{\mathbb{P}^{s}[\hb_{\delta, k}](\overline{\ug}_{\gamma}(\cdot,t),\partial_{t}\eta_{\gamma}(\cdot,t)) -(\overline{\ug}_{\gamma}(\cdot,t),\partial_{t}\eta_{\gamma}(\cdot,t))}_{X^0}
&\leq\norme{\mathbb{P}^{s}[\hb_{\delta, k}](\overline{\ug}_{\gamma}(\cdot,t),\partial_{t}\eta_{\gamma}(\cdot,t)) -(\overline{\ug}_{\gamma}(\cdot,t),\partial_{t}\eta_{\gamma}(\cdot,t))}_{X^s}\\
&\leq C_{A}(\delta)\norme{\nabla \overline{\ug}_{\gamma}(\cdot,t)}_{\BS{L}^{2}_{\sharp}(\mathcal{F}^{-}_{h_{\gamma}(t)})},
\end{align*}
with $\lim_{x\rightarrow 0} C_A(x)=0$.
Using Cauchy--Schwartz inequality, we deduce from the previous estimate that
\begin{align*}
\sum_{k=0}^N |Err_{\gamma,k}^{app}| & \leq \sum_{k=0}^N \int_{I_k} \norme{\mathbb{P}^{s}[\hb_{\delta, k}](\overline{\ug}_{\gamma}(\cdot,t),\partial_{t}\eta_{\gamma}(\cdot,t)) -(\overline{\ug}_{\gamma}(\cdot,t),\partial_{t}\eta_{\gamma}(\cdot,t))}_{X^0}
\|(\overline{\ug}_{\gamma},\partial_t \eta_{\gamma})\|_{X^0} \\ 
& \leq C_A({\delta})\|(\overline{\ug}_{\gamma},\partial_t \eta_{\gamma})\|_{L^\infty(0, T;X^0)} \int_0^T  \|\nabla \overline{\ug}_{\gamma}\|_{L^2(\mathcal F_{h_{\gamma}(t)})} .
\end{align*}
Then we use the uniform estimates \eqref{bound-u-1}, \eqref{bound-u-2} to obtain that there exists a constant $C_1$
(depending only on initial data and $T$) such that, for $\gamma \leq \gamma_0$
\begin{equation} \label{eq_Err1}
\sum_{k=0}^N |Err_{\gamma,k}^{app}| \leq C_1 C_A(\delta).
\end{equation}
 Similarly  we have
\begin{equation} \label{eq_Err2}
\sum_{k=0}^N |Err_{k}^{app}| \leq C_1 C_A(\delta).
\end{equation}
To complete the proof, the following term remains to be estimated
\[
\limsup_{\gamma \to 0} \sum_{k=0}^N |Err_{\gamma,k}^{conv}|.
\] 
At first,  we prove that, for a fixed $k \leq N$ and  up to a subsequence,  $\mathbb{P}[\hb_{\delta, k}](\rho_{\gamma}\overline{\ug}_{\gamma},\partial_{t}\eta_{\gamma})$ converges strongly to $\mathbb{P}[\hb_{\delta, k}](\rho\overline{\ug},\partial_{t}\eta)$ in $L^{2}(I_{k};(X^{s}[\hb_{\delta, k}])')$. Note that, since $(\rho_{\gamma}\overline{\ug}_{\gamma},\partial_t \eta_{\gamma})$ converges weakly to $(\rho \overline{\ug},\partial_t \eta)$ in $L^2(I_k;X^0)$ the only difficulty relies on showing that the sequence $\mathbb{P}[\hb_{\delta, k}](\rho\overline{\ug},\partial_{t}\eta)$ is relatively compact in $L^{2}(I_{k};(X^{s}[\hb_{\delta, k}])').$
To do so we apply an adapted version of Aubin--Lions lemma that can be found in \cite[Section 4.3]{Fujita-Sauer} that reads
\begin{lemma} Let us consider three Hilbert spaces $M_i$, $i=1, 2,3 $ and two operators $T : M_0 \mapsto M_1$ and $S : M_0 \mapsto M_2$ satisfying
\begin{itemize}
\item $T$ and $S$ are two linear compact operators,
\item  $Su=0$ implies $Tu=0$.
\end{itemize}
If $(u_n)$ is bounded in $L^2(0, T; M_0)$ and $(\partial_t S u_n)$ is bounded in $L^2(0, T; M_2)$, then $T u_n$ is a compact set of $L^2(0, T; M_1)$.
\end{lemma}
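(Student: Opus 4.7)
The plan is to reduce the statement to a Simon–Aubin–Lions type compactness argument, relying on an Ehrling-type inequality adapted to the compact operators $T$ and $S$. The central ingredient will be the following functional inequality: for every $\varepsilon>0$ there exists a constant $C_\varepsilon>0$ such that
\begin{equation*}
\|Tu\|_{M_1} \leq \varepsilon \|u\|_{M_0} + C_\varepsilon \|Su\|_{M_2}, \qquad \forall\, u \in M_0.
\end{equation*}
I would prove this by contradiction: if it failed, there would be $\varepsilon_0>0$ and a sequence $(u_n) \subset M_0$ with $\|u_n\|_{M_0}=1$, $\|Tu_n\|_{M_1}\geq \varepsilon_0$ and $\|Su_n\|_{M_2}\to 0$. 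Using reflexivity of $M_0$, one extracts a weakly convergent subsequence $u_n\rightharpoonup u$; the compactness of $T$ and $S$ upgrades this to $Tu_n\to Tu$ and $Su_n\to Su$ strongly, whence $Su=0$. The hypothesis $\ker S\subset \ker T$ then forces $Tu=0$, contradicting $\|Tu\|_{M_1}\geq \varepsilon_0$.

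Next, I would apply this inequality to the time increments $u_n(\cdot+h)-u_n(\cdot)$ to obtain uniform equicontinuity of the sequence $(Tu_n)$ in $L^2(0,T;M_1)$. Specifically,
\begin{equation*}
\|T(u_n(\cdot+h)-u_n(\cdot))\|_{L^2(0,T-h;M_1)}^{2} \leq 2\varepsilon^2 \|u_n(\cdot+h)-u_n(\cdot)\|_{L^2(0,T-h;M_0)}^{2} + 2 C_\varepsilon^2 \|S(u_n(\cdot+h)-u_n(\cdot))\|_{L^2(0,T-h;M_2)}^{2}.
\end{equation*}
The first contribution is bounded uniformly in $n$ and $h$ by the boundedness of $(u_n)$ in $L^2(0,T;M_0)$, and its prefactor $\varepsilon^2$ is arbitrarily small. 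The second contribution reads $S(u_n(t+h)-u_n(t))=\int_t^{t+h}\partial_t(Su_n)(s)\,\mathrm{d}s$, and Cauchy–Schwarz gives a bound by $h^2 \|\partial_t Su_n\|_{L^2(0,T;M_2)}^{2}$, hence it goes to $0$ uniformly in $n$ as $h\to 0$. Choosing first $\varepsilon$ small and then $h$ small yields the desired uniform time equicontinuity.

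To finish, I would invoke Simon's compactness criterion in $L^2(0,T;M_1)$. The sequence $(Tu_n)$ is bounded in $L^2(0,T;M_1)$ by continuity of $T$; the equicontinuity in time has just been established; it remains to check the pointwise relative compactness required by the criterion, namely that for each $0<a<b<T$ the set $\left\{\int_a^b Tu_n(t)\,\mathrm{d}t\right\}_n$ is relatively compact in $M_1$. But Cauchy–Schwarz yields that $\int_a^b u_n(t)\,\mathrm{d}t$ is bounded in $M_0$ uniformly in $n$, and since $T$ is compact, $\int_a^b Tu_n(t)\,\mathrm{d}t = T\!\left(\int_a^b u_n(t)\,\mathrm{d}t\right)$ lies in a relatively compact subset of $M_1$. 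Simon's criterion then concludes the proof.

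The main obstacle is the Ehrling-type inequality of the first step, which is precisely where the structural assumption $\ker S \subset \ker T$ enters; once this inequality is in hand, the rest is a by-now classical variant of Aubin–Lions compactness, the only care being that one must work with $T$ applied to time integrals rather than pointwise values, since we have no pointwise control on $u_n(t)$ in $M_0$.
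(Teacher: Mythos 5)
Your proof is correct. Note, however, that the paper does not actually prove this lemma: it is quoted verbatim and attributed to Fujita and Sauer \cite[Section 4.3]{Fujita-Sauer}, so there is no "paper proof" to compare against beyond that reference. Your argument follows the standard Aubin--Lions/Simon route adapted to this abstract operator setting: the Ehrling-type inequality
\[
\|Tu\|_{M_1}\leq\varepsilon\|u\|_{M_0}+C_\varepsilon\|Su\|_{M_2},
\]
obtained by contradiction from the compactness of $T$ together with the kernel inclusion $\ker S\subset\ker T$, plays the role of the interpolation inequality in the classical Lions form of the lemma; the time-increment splitting, with the $S$-piece controlled through $S(u_n(\cdot+h)-u_n)=\int_{\cdot}^{\cdot+h}\partial_t(Su_n)$ and the uniform $L^2$-bound on $\partial_t Su_n$, gives the required equicontinuity; and the pointwise condition in Simon's criterion is checked by noting that $\int_a^b Tu_n\,\dd t = T\bigl(\int_a^b u_n\,\dd t\bigr)$ is the image under a compact operator of a bounded set. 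This is the same mechanism as in the cited reference. One minor remark: in the contradiction step you invoke compactness of $S$ to upgrade $Su_n\rightharpoonup Su$ to strong convergence, but weak lower semicontinuity of the norm already gives $\|Su\|\leq\liminf\|Su_n\|=0$, so only the compactness of $T$ (for $\|Tu\|\geq\varepsilon_0$) and the boundedness of $S$ are actually used in establishing the Ehrling inequality.
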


We are going to use this version of Aubin--Lions lemma with the triplet $(X^0,(X^{s}[\hb_{\delta, k}])',(X^{1}[\hb_{\delta, k}])')$ and with $S= i_1\circ \mathbb{P}$, $T=i_s\circ  \mathbb{P}$, where $i_l$ denotes the injection of $X[\hb_{\delta, k}]$ into $(X^{l}[\hb_{\delta, k}])'$.  The mappings $i_l$ are indeed injective functions since Lemma \ref{lemma.density} implies that the continuous embedding $X^l[\hb_{\delta,k}] \subset X[\hb_{\delta, k}]$ is dense, for $l>0$ and these densities imply that $X[\hb_{\delta, k}] $ is continuously embedded in $(X^{l}[\hb_{\delta, k}])'$, for $l>0$. Moreover, thanks to Rellich--Kondrachov theorem the embeddings $X^l[\hb_{\delta,k}] \subset X[\hb_{\delta, k}]$ are compact.   The dual of a compact operator  being compact,  $X[\hb_{\delta, k}] $ is compactly embedded in  $(X^{l}[\hb_{\delta, k}])'$ for $l>0$. Consequently $i_1\circ \mathbb{P}$ and $i_s\circ  \mathbb{P}$ are compact linear operators. Moreover $i_1\circ \mathbb{P}(\wg, b)=0$ implies $\mathbb{P}(\wg, b)=0$ so that the second point is clearly satisfied.

Next applying \eqref{bound-u-1}, we have that $(\overline{\ug}_{\gamma},\partial_{t}\eta_{\gamma})$ is uniformly bounded in $\gamma$ in $L^{2}(0,T;X^0)$. Thus the sequence $(\rho_{\gamma}\overline{\ug}_{\gamma},\partial_{t}\eta_{\gamma})$ is  bounded in $\gamma$ in $L^{2}(I_{k};X^0)$. We must now obtain a uniform bound for
$\partial_{t}\mathbb{P}[\hb_{\delta, k}](\rho_{\gamma}\overline{\ug}_{\gamma},\partial_{t}\eta_{\gamma})$ in $L^p(I_k; (X^1[\hb_{\delta,k}])').$ Precisely, we look for an estimate of the type
\[
\left| -\int_{I_{k}}\left( \mathbb{P}[\hb_{\delta, k}](\rho_{\gamma}\overline{\ug}_{\gamma},\partial_{t}\eta_{\gamma}), \partial_{t}(\wg,b)     \right)_{X^0}
\right| \leq C \int_{I_k} \|(\wg(t),d(t))\|_{X^1[\hb_{\delta,k}]}^{2}
{\rm d}t,
\quad \forall \, (\wg,d) \in L^2(I_k; X^1[\hb_{\delta,k}]).
\]
To obtain such an estimate we use the variational formulation \eqref{weak.formulation.FS} satisfied by $(\overline{\ug}_{\gamma},\partial_{t}\eta_{\gamma})$. We consider
$(\wg,d) \in \mathcal C^{\infty}_c(I_k; X^1[\hb_{\delta,k}])$. This is an admissible test function since $\hb_{\delta}\leq h_\gamma$ and since, in the case where $\gamma>0$ for which $\min_{x\in [0, L]}h_\gamma(t, x)>0,\forall t\in [0, T]$, we can consider test functions in $C^{\infty}([0, T]; X^1[h_\gamma(t)])$.
 We obtain
\[\begin{aligned}
-\int_{I_{k}}\left( \mathbb{P}[\hb_{\delta, k}](\rho_{\gamma}\overline{\ug}_{\gamma},\partial_{t}\eta_{\gamma}), \partial_{t}(\wg,b)     \right)_{X^0} &=-\rho_{f}\int_{I_{k}}\rho_{\gamma}\overline{\ug}_{\gamma}\cdot\partial_{t}\wg - \rho_{s}\int_{I_{k}}\partial_{t}\eta_{\gamma}\partial_{t}b\\
&=\rho_{f}\int_{I_{k}}\int_{\mathcal{F}_{h_\gamma(t)}}({\ug}_{\gamma}\cdot\nabla)\wg\cdot{\ug}_{\gamma}-\mu\int_{I_{k}}\int_{\mathcal{F}_{h_\gamma(t)}}\nabla{\ug_\gamma}:\nabla \wg\\
& \qquad -\beta\int_{I_{k}}\int_{0}^{L}\partial_{x}\eta_{\gamma}\partial_{x}b+ \alpha\int_{I_{k}}\int_{0}^{L}\partial_{xx}\eta_{\gamma}\partial_{xx}b \\
& \qquad +\gamma\int_{I_{k}}\int_{0}^{L}\partial_{tx}\eta_{\gamma}\partial_{x}b.
\end{aligned}
\]
The nonlinear convection term is estimated using the $L^{4}$--regularity of $\overline{\ug}_{\gamma}$ stated in Lemma \ref{lem:extension-u}, 
\[\begin{aligned}
\left\vert\rho_{f} \int_{I_{k}}\int_{\CM}(\rho_{\gamma}\overline{\ug}_{\gamma}\cdot\nabla)\wg\cdot\rho_{\gamma}\overline{\ug}_{\gamma}\right\vert&{}\leq \rho_{f}\int_{I_{k}}\norme{\overline{\ug}_{\gamma}(t)}_{\BS{L}^{4}(\CM)}^{2}\norme{\nabla\wg(t)}_{\BS{L}^{2}(\CM)}\dd t\\
&\leq \rho_{f}\norme{\overline{\ug}_{\gamma}}_{L^{4}(\CMT)}^{2}\norme{\nabla\wg}_{L^{2}(I_{k};\BS{L}^{2}(\CM))}.
\end{aligned}
\]
The other terms are estimated directly and we obtain
\[
\left\vert \int_{I_{k}}\left( \mathbb{P}[\hb_{\delta, k}](\rho_{\gamma}\overline{\ug}_{\gamma},\partial_{t}\eta_{\gamma}), \partial_{t}(\wg,b)      \right)_{X^0}\right\vert \leq C\norme{(\wg,b)}_{L^{2}(I_{k};X^{1}[\hb_{\delta, k}])},
\]
where $C$ depends only on the initial data.
The previous inequality implies that $\partial_{t}\mathbb{P}[\hb_{\delta, k}](\rho_{\gamma}\overline{\ug}_{\gamma},\partial_{t}\eta_{\gamma})$ is bounded in $\gamma$ in $L^{2}(I_{k};(X^{1}[\hb_{\delta, k}])')$. It then follows  from the adapted version of  Aubin--Lions lemma that $\mathbb{P}[\hb_{\delta, k}](\rho_{\gamma}\overline{\ug}_{\gamma},\partial_{t}\eta_{\gamma})$ is compact in $L^{2}(I_{k};(X^{s}[\hb_{\delta, k}])')$. Moreover, since $(\overline{\ug}_{\gamma},\partial_t \eta_{\gamma})$ converges weakly to 
$(\overline{\ug},\partial_t \eta)$ in $L^2(I_k; X^s)$, for $s<1/2$ we   also have that 
$\mathbb P^s[\hb_{\delta,k}](\overline{\ug}_{\gamma},\partial_t \eta_{\gamma})$ converges weakly to $\mathbb P^s[\hb_{\delta,k}](\overline{\ug},\partial_t \eta)$ in $L^2(I_k;X^s).$ Combining a strong and a weak convergence result leads to
\begin{equation} \label{eq_Err3}
\lim_{\gamma \to 0} Err_{\gamma,k}^{conv} = 0 \quad \forall \, k \leq N. 
\end{equation}
Finally, combining \eqref{eq_Err1}--\eqref{eq_Err3}, we conclud that 
\[
\limsup_{\gamma \to 0} |Err_{\gamma} | \leq C_1C_A(\delta) 
\] 
for arbitrary $\delta >0.$ We conclude the proof by remarking that $C_A(\delta) \to 0$ when $\delta \to 0.$ For completeness, we remark that in the computations of bounds for $Err_{\gamma}$ we only extract subsequences when we apply the Aubin--Lions lemma. Since we perform extraction a finite number of times for any value of the parameter $\delta$ that we can choose in a denumerable sets ({\em i.e.} a sequence converging to $0$), our proof induces indeed  denumerable extractions of subsequences.


\begin{remark}
In the final weak formulation we consider fluid  test functions that vanishes in the neighbourhood of the bottom of the fluid cavity and that are only transverse in the neighbourhood  of the interface. Note that we could have also consider  fluid test functions that vanishes in the neighbourhood any contact point. It imposes in particular the structure test function to be zero near  each contact point so that they depend implicitly on the solution.  
\end{remark}

\appendix
\section{Proof of lemma \ref{lemma.density}}
\label{Annexe}
This appendix is devoted to a density lemma in the space $X[h].$ 
We first recall the statement of the lemma to be proven and proceed
to the proof.

\begin{lemma}\label{lemma.density.app}
Let $h \in \mathcal C^0_{\sharp}(0,L)$ satisfy $0 \leq h(x) \leq M, \forall x\in [0, L]$ The embedding $X[h]\cap (\mathcal{C}^{\infty}(\overline{\CM}) \times \mathcal C^{\infty}_{\sharp}(0,L)) \subset X[h]$ is dense.
\end{lemma}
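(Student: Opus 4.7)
The plan is to lift the approximation problem to the level of the stream function. By Lemma \ref{lemma.X[h]}, for $(\wg,d) \in X[h]$ we can write $\wg = \nabla^\perp \Psi$ with $\Psi \in H^1_\sharp(\Omega)$ satisfying $\Psi \equiv 0$ on $\mathcal{C}_{-1}^0$, $\Psi(x,y) = b(x)$ on $\mathcal{S}_h$ for some $b \in H^1_\sharp(0,L)$ with $\partial_x b = d$, and $\Psi \equiv 0$ on $I^c \times (-1,2M)$, where $I = \{x \in (0,L) : h(x) > 0\}$. It therefore suffices to approximate such a $\Psi$ in $H^1_\sharp(\Omega)$ by smooth $\Psi_n$ enjoying the same three structural properties, and then to take $\wg_n = \nabla^\perp \Psi_n$ and $d_n = \partial_x b_n$: this automatically produces smooth elements of $X[h]$ (the trace condition $w_{n,2}|_{y=M}=d_n$ and the mean-free condition follow from the construction).

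When $\min_{[0,L]} h > 0$, the graph $\{y=h(x)\}$ is Lipschitz and the construction is standard. One first applies a vertical contraction $(x,y)\mapsto (x,\tau y)$ with $\tau$ slightly less than $1$, so that the image of the interface lies strictly below $\{y=h(x)\}$ and the resulting stream function becomes constant in $y$ on a full open neighbourhood of $\overline{\mathcal{S}_h}$; one then convolves in $(x,y)$ with a smooth kernel of small radius. The three structural properties pass to the regularization on the appropriate subsets, and the standard $H^1$-convergence of mollifications gives the claim.

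The delicate case is when $h$ has zeros, for then $\mathcal{F}_h$ has cusps and one cannot mollify directly at the level of the full domain. The idea is first to annihilate $\Psi$ in a neighbourhood of the contact set $\{h=0\}$, and then to reduce to the previous step. Concretely, one picks a smooth periodic cut-off $\theta_\varepsilon \in \mathcal{C}^\infty_\sharp(0,L)$ equal to $1$ on $\{h\geq 2\varepsilon\}$ and vanishing in a neighbourhood of $\{h\leq \varepsilon\}$, and sets $\Psi_\varepsilon := \theta_\varepsilon \Psi$. This function still vanishes on $\mathcal{C}^0_{-1}$, equals $\theta_\varepsilon(x) b(x)$ on $\mathcal{S}_h$, and is supported away from the cusps, so the previous paragraph yields a smooth approximation of $\Psi_\varepsilon$. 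The heart of the proof is showing $\Psi_\varepsilon \to \Psi$ in $H^1_\sharp(\Omega)$ as $\varepsilon \to 0$: the $L^2$ part and the term $(\theta_\varepsilon - 1)\nabla\Psi$ are handled by dominated convergence, but the term $(\partial_x\theta_\varepsilon)\Psi$, supported on the narrow strip $\{\varepsilon\leq h(x)\leq 2\varepsilon\}$ with $|\partial_x\theta_\varepsilon|\lesssim \varepsilon^{-1}$, is the main obstacle. The crucial point is to use the fact, provided by Lemma \ref{lemma.X[h]}, that $\Psi$ vanishes identically on $\{a\}\times(-1,2M)$ for every $a\in I^c$. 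On $\mathcal{S}_h$ this reduces to a 1D Hardy-type control for $b\in H^1_\sharp(0,L)$ vanishing on $I^c$; on the fluid strip one uses the analogous vertical-line vanishing of $\Psi$ together with its $H^1$-regularity. Once this convergence is established, a diagonal extraction between the parameter $\varepsilon$ and the mollification radius of Step 2 concludes the proof.
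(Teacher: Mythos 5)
Your proposal takes a genuinely different route from the paper. You build the approximating sequence directly, by cutting off the stream function in the $x$-variable and then mollifying; the paper instead proves that the orthogonal complement of the smooth subspace in $X[h]$ is trivial. Concretely, given $(\ug,\dot\eta)$ orthogonal to every smooth pair, the paper tests the orthogonality relation against $\bigl(\nabla^\perp(\chi_\varepsilon\Psi),\partial_x(\chi_\varepsilon b)\bigr)$ for a cutoff $\chi_\varepsilon$ supported inside a single connected component $(a_i,b_i)$ of $I=\{h>0\}$, integrates by parts, and exploits a one-dimensional Poincar\'e inequality on the $\varepsilon$-strips near the endpoints $a_i,b_i$ (valid because $\Psi(a_i,\cdot)=\Psi(b_i,\cdot)=0$). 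Both proofs rest on Lemma~\ref{lemma.X[h]} for the structure of the stream function, but the duality route never requires proving $\theta_\varepsilon\Psi\to\Psi$ in $H^1$ globally.

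The gap in your version is in the estimate of $(\partial_x\theta_\varepsilon)\Psi$. First, for $h\in\mathcal C^0_\sharp(0,L)$ a cutoff $\theta_\varepsilon$ equal to $1$ on $\{h\geq2\varepsilon\}$ and vanishing near $\{h\leq\varepsilon\}$ need not satisfy $|\partial_x\theta_\varepsilon|\leq C\varepsilon^{-1}$: the separation between these two level sets is governed by the modulus of continuity of $h$, not by $\varepsilon$, and may be arbitrarily small. Second, and more importantly, the transition set $\{\varepsilon<h<2\varepsilon\}$ is not localized near the contact set $I^c$; it also contains every point where $h$ has a small \emph{positive} local minimum, and there $\Psi$ and $b$ have no reason to be small. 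The Hardy-type bound you invoke controls $\Psi/\operatorname{dist}(\cdot,I^c)$, but the weight $|\partial_x\theta_\varepsilon(x)|\operatorname{dist}(x,I^c)$ is not uniformly bounded, precisely because the cutoff transition occurs far from $I^c$ whenever $h$ dips just below $2\varepsilon$ away from a contact. So the argument does not close as written. It can be repaired by replacing the level-set cutoff with the endpoint-based cutoff used by the paper (supported in $(a_i+\varepsilon/2,b_i-\varepsilon/2)$, equal to $1$ on $[a_i+\varepsilon,b_i-\varepsilon]$, with $\|\chi'_\varepsilon\|_{L^\infty}<\varepsilon^{-1}$ by construction), together with the Poincar\'e bound
\[
\int_{a_i}^{a_i+\varepsilon}\int_{-1}^{2M}\Psi^2\leq\frac{\varepsilon^2}{2}\int_{a_i}^{a_i+\varepsilon}\int_{-1}^{2M}|\nabla\Psi|^2,
\]
which absorbs the $\varepsilon^{-2}$ blow-up and leaves a remainder that vanishes because $\ug,\dot\eta\in L^2$; with that substitution your direct construction becomes a valid alternative to the paper's duality argument.
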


\begin{proof}
First notice that the main difficulty here comes from the potential contact {\em i.e.} the points where $h$ is equal to zero. If there is no contact we may construct explicitly a smooth approximating sequence of any pair in $X[h]$ by adapting the arguments of \cite{Chambolle-etal}, see also the construction of approximate initial data in Section \ref{sec:proof1}.

\medskip

When $h$ vanishes, we propose an alternative proof:  in this case we obtain  that $(X[h]\cap \mathcal{C}^{\infty}(\overline{\CM}) \times \mathcal C^{\infty}_{\sharp}(0,L))^{\perp}=\{(\BS{0},0)\}$. So, let $(\ug,\overset{\cdot}{\eta}) \in X[h]$ and assume it satisfies:
\begin{equation}\label{PS.1}\rho_{f}\int_{\CM}\ug\cdot\wg +\rho_{s}\int_{0}^{L}\overset{\cdot}{\eta}d=0, \quad 
\forall \, (\wg,d)\in X[h]\cap \mathcal{C}^{\infty}(\overline{\CM}) \times \mathcal C^{\infty}_{\sharp}(0,L). 
\end{equation}
Using Lemma \ref{lemma.X[h]} there exists $\Psi\in H^{1}_{\sharp}(\CM)$ and $b\in H^{1}_{\sharp}(0,L)$ such that $\ug=\nabla^{\bot}\Psi$ with $\Psi=b(x)$ in $\mathcal{S}_{h}$ and $\Psi=0$ in $\mathcal{C}^{0}_{-1} \cup  I^{c}\times(-1,2M)$ where $\displaystyle I=\{x\in[0,L]\mid h(x)>0\}$. To complete the proof, we obtain that $\ug$ vanishes in $I\times(-1,2M)$. 
Since $I$ is an open subset of $(0,L)$ we may construct an at most denumerable $\mathcal I$ such that 
\[
I=\bigsqcup_{i\in\mathcal I}(a_{i},b_{i})
\] 
where the $(a_{i},b_{i})$ are the connected components of $I$. It is now sufficient to prove that, for arbitrary  $i\in \mathcal I$ there holds $\ug=0$ in $\CM_{i}=(a_{i},b_{i})\times(-1,M)$. This is obtained by a suitable choice of functions $(\wg,d)$ in \eqref{PS.1}.

\medskip

Let fix $i \in \mathcal I$  and $\varepsilon>0$ small enough. Consider $\chi_{\varepsilon} \in \mathcal{C}^{\infty}_{c}((a_{i},b_{i}))$ such that 
\[
\begin{aligned}
& \chi_{\varepsilon}=1 \text{ on $[a_{i}+\varepsilon,b_{i}-\varepsilon]$}, 
\quad  
\text{supp}(\chi_{\varepsilon})\subset[a_{i}+\frac{\varepsilon}{2},b_{i}-\frac{\varepsilon}{2}], \\
&  \|\chi_{\varepsilon}'\|_{L^{\infty}(\mathbb R)}<\frac{1}{\varepsilon} 
\qquad  \|\chi_{\varepsilon}''\|_{L^{\infty}(\mathbb R)}<\frac{1}{\varepsilon^{2}}.
\end{aligned}
\] 
Existence of such a truncation function is classical. 
We now introduce  
\[
\wg_\varepsilon=\nabla^{\perp}(\chi_{\varepsilon}\Psi) \quad 
d_{\varepsilon} = \partial_x (\chi_{\varepsilon}b).
\]
It is straightforward that $(\wg_{\varepsilon},d_{\varepsilon}) \in \BS L^2((a_{i},b_{i}) \times (-1,M)) \times L^2_0((a_{i},b_{i}))$ and has support in $(a_i+\varepsilon,b_i-\varepsilon).$ On the other hand, there exists $\delta_{\varepsilon}>0$ such that $h(x)\geq \delta_{\varepsilon}$
on $(a_i+\varepsilon/2,b_i-\varepsilon/2)$. Setting $h_{\varepsilon} = \max(h,\delta_{\varepsilon})$ we have then that $h_{\varepsilon} \in \mathcal C^0_{\sharp}(0,L)$ does not vanish and $(\wg_{\varepsilon},d_{\varepsilon}) \in X[h_{\varepsilon}].$ Consequently, we may reproduce the arguments in the case of a non vanishing deformation to approximate $(\wg_{\varepsilon},d_{\varepsilon})$ by a sequence of pairs 
in $X[h_{\varepsilon}]  \cap (\mathcal C^{\infty}(\overline{\Omega}) \times \mathcal C^{\infty}_{\sharp}(0,L)).$ Moreover, we emphasize that,
by construction, this sequence has support in $(a_i+\varepsilon/2,b_i-\varepsilon/2)\times (-1,2M)$ also so that it is actually a sequence of $X[h] \cap (\mathcal C^{\infty}(\overline{\Omega}) \times \mathcal C^{\infty}_{\sharp}(0,L))$ that approximates $(\wg_{\varepsilon},d_{\varepsilon})$ in $X[h]$ also.
 
 \medskip
 
Consequently the identity \eqref{PS.1} holds true for $(\wg_{\varepsilon},d_{\varepsilon})$ also and we have
 $$
\int_{\CM_{i}}\ug\cdot\nabla^{\perp}(\chi_{\varepsilon}\Psi) + \int_{a_{i}}^{b_{i}}\overset{\cdot}{\eta} \partial_x (\chi_{\varepsilon} b)=0.
$$
But, recalling that $\nabla^{\bot} \Psi = \ug$ and $\partial_x b = \dot{\eta}$ we may expand the differential operators to yield that:
$$
0 = \int_{\CM_{i}}\ug\cdot\nabla^{\perp}(\chi_{\varepsilon}\Psi) + \int_{a_{i}}^{b_{i}}\overset{\cdot}{\eta}z_\varepsilon=\int_{\CM_{i}}\vert\ug\vert^{2}\chi_\varepsilon + \int_{a_{i}}^{b_{i}}\vert\overset{\cdot}{\eta}\vert^{2}\chi_\varepsilon + \int_{\CM_{i}}\ug\cdot\nabla^{\perp}(\chi_{\varepsilon})\Psi + \int_{a_{i}}^{b_{i}}\overset{\cdot}{\eta}b{\chi}_{\varepsilon}'.
$$
Since $\chi_{\varepsilon}$ depends on the $x$-variable only and $\chi'_{\varepsilon}$ vanishes on $\{a_i,a_i+ \varepsilon,b_i-\varepsilon,b_i\}$  , we have, by integrating by parts:
\begin{equation}\label{calcul-1}
-\int_{\CM_{i}}\ug\cdot\nabla^{\perp}(\chi_{\varepsilon})\Psi=\int_{a_i}^{a_i+\varepsilon} \int_{-1}^{2M} \frac{\Psi^{2}}{2}{\chi}_{\varepsilon}''
+ \int_{b_i-\varepsilon}^{b_i} \int_{-1}^{2M} \frac{\Psi^{2}}{2}{\chi}_{\varepsilon}''
\end{equation}
Similarly we prove the following equality:
\begin{equation}\label{calcul-2}-\int_{a_{i}}^{b_{i}}\overset{\cdot}{\eta}\,b(x) {\chi}_{\varepsilon}'=
\int_{a_i}^{a_i+\varepsilon}\frac{b(x)^{2}}{2}{{\chi}_{\varepsilon}''}
+\int_{b_i-\varepsilon}^{b_i}\frac{b(x)^{2}}{2}{{\chi}_{\varepsilon}''}.
\end{equation}
Since  $\Psi=0$ on $\{a_{i}\}\times(-1,2M)$
a standard Poincar\'e inequalities  entails that:
\[\begin{aligned}
&\int_{a_{i}}^{a_i+\varepsilon} \int_{-1}^{2M} \frac{\Psi^{2}}{2}\leq \frac{\varepsilon^{2}}{4}\int_{a_{i}}^{a_i+\varepsilon} \int_{-1}^{2M}\vert\nabla\Psi\vert^{2}=\frac{\varepsilon^{2}}{4}\int_{a_{i}}^{a_i+\varepsilon}\vert\ug\vert^{2},\\
&\int_{a_{i}}^{a_i+\varepsilon}\frac{b(x)^{2}}{2}\leq\frac{\varepsilon^{2}}{4}\int_{a_{i}}^{a_i+\varepsilon}\left|\partial_{x}b(x)\right|^{2}=\frac{\varepsilon^{2}}{4}\int_{a_{i}}^{a_i+\varepsilon}\vert\overset{\cdot}{\eta}\vert^{2},
\end{aligned}
\]
We have a similar identity for integrals involving $(b_i-\varepsilon,b_i)$ by using that $\Psi = 0$ on $\{b_i\} \times \{-1,2M\}.$ Using finally that $L^{\infty}$-estimate on ${\chi}''_{\varepsilon}$ in \eqref{calcul-1}-\eqref{calcul-2} we conclude
\begin{equation}\label{calcul-3}\int_{a_i+\varepsilon}^{b_i-\varepsilon} \int_{-1}^{2M} \vert\ug\vert^{2}+\int_{a_{i}+\varepsilon}^{b_{i}-\varepsilon}\vert\overset{\cdot}{\eta}\vert^{2}\leq \frac{1}{4}\left(\int_{a_i}^{a_i+\varepsilon} \int_{-1}^{2M} \vert\ug\vert^{2}+\int_{a_i}^{a_i+\varepsilon}\vert\overset{\cdot}{\eta}\vert^{2}
+\int_{b_i-\varepsilon}^{b_i} \int_{-1}^{2M} \vert\ug\vert^{2}+\int_{b_i-\varepsilon}^{b_i}\vert\overset{\cdot}{\eta}\vert^{2} \right).
\end{equation}
Since $(\ug,\dot{\eta})$ are both $L^2$-functions, the right-hand side of this identity vanishes when $\varepsilon \to 0.$ So, letting $\varepsilon\rightarrow 0$ we obtain $(\ug,\overset{\cdot}{\eta})=(\BS{0},0)$ in $\Omega_i$.  This ends the proof.
\end{proof}

\end{document}